\theoremstyle{plain}
\newtheorem{theorem}{Theorem}[section]
\newtheorem{thm}{Theorem}[section]
\newtheorem{lemma}[thm]{Lemma}
\newtheorem{corollary}[thm]{Corollary}
\newtheorem{corollary*}[thm]{Corollary*}
\newtheorem{proposition}[thm]{Proposition}
\newtheorem{proposition*}[thm]{Proposition*}
\newtheorem{conjecture}{Conjecture}
\newtheorem{open problem}[thm]{Open Problem}
\newtheorem{remark}[thm]{Remark}
\theoremstyle{definition}
\newtheorem{defn-thm}[thm]{Definition-Theorem}
\numberwithin{equation}{section}
\newenvironment{enumerateroman}{\begin{enumerate}[\textup{(}i\textup{)}] }{\end{enumerate}}
\title[A sharp convergence theorem for the mean curvature flow ]
{ A sharp convergence theorem for the mean curvature flow in spheres I }
\author{Dong Pu}
\address{Faculty of Science, Tongji Zhejiang College,
             Jiaxing,  314051, People's Republic of China}
\email{pudong@zju.edu.cn}
\thanks{ }
\keywords{submanifolds, mean curvature flow, curvature pinching, convergence theorem}
\subjclass[2010]{53C44}
\begin{document}


\begin{abstract}  In this paper, we prove a sharp convergence theorem for the mean curvature flow of arbitrary codimension in spheres which improves Baker's convergence theorem. In particular, we obtain a new
differentiable sphere theorem for submanifolds in spheres.
\end{abstract}
 \maketitle

\section{Introduction}
Let $M$ be an $n$-dimensional compact submanifold isometrically
immersed in a Riemannian manifold $N^{n+p}$. Denoted by $F_0$ the
isometric immersion. We consider the one-parameter family
$F_{t}=F(\cdot,t)$ of immersions $F_{t}:M\rightarrow N^{n+p}$ with
corresponding images $M_{t}=F_{t}(M)$ which satisfies
\begin{equation}
  \left\{ \begin{array}{l}
\frac{\partial}{\partial t}F(x,t)=H(x,t),\\
F(x,0)=F_0(x),
\end{array} \right.
\end{equation}
where $H(x,t)$ is the mean curvature vector of $M_t$. We call
$F_{t}:M\rightarrow N^{n+p}$ the mean curvature flow with initial
value $F_0$. In 1980's,  Huisken proved the convergence theorems for
the mean curvature flow of hypersurfaces under certain conditions in
a series of papers.
 For the initial hypersurfaces satisfying the convexity condition, Huisken \cite{H1,H2} proved that the
solution of the mean curvature flow converges to a point as the time
approaches the finite maximal time. Motivated by the rigidity theorem for hypersurfaces with constant mean curvature in spheres due to Okumura \cite{Ok}, Huisken \cite{H3}
proved the convergence theorem for the mean curvature flow of  hypersurfaces under the curvature pinching condition for
$|H|^2$ and the squared norm of the second fundamental form $|A|^2$ in the
sphere $\mathbb{S}^{n+1}$.
\begin{theorem}\label{Huisken}
Let $F_0 :M^{n} \rightarrow \mathbb{S}^{n+1}(\frac{1}{\sqrt{\bar K}})$ be a smooth compact
hypersurface in a sphere with constant curvature $\bar K$.
Assume $M$ satisfies
\begin{equation}
|A|^2<\begin{cases}
            \frac{3}{4}|H|^2+\frac{4}{3}\bar K, \ &n = 2, \\
            \frac{1}{n-1}|H|^2+2\bar K, \ &n \geq 3.
        \end{cases}
\end{equation}
Then
  the mean curvature flow with the initial value $F_0$
either converges to a round point in finite time, or
converges to a total geodesic sphere of $\mathbb{S}^{n+1}(\frac{1}{\sqrt{\bar K}})$ as
$t\rightarrow\infty$.
\end{theorem}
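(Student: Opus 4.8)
\textbf{Proof strategy for Theorem~\ref{Huisken}.} The plan is to run the mean curvature flow~(1.1) from $F_0$ and to control it by parabolic maximum principles together with a rescaling argument. By the standard short-time existence theory for quasilinear parabolic systems, (1.1) has a unique smooth solution on a maximal time interval $[0,T)$, and if $T<\infty$ then $\max_{M_t}|A|^2\to\infty$ as $t\to T$. The argument then has three parts: (I) the strict pinching hypothesis is preserved along the flow; (II) wherever $|H|$ is large the hypersurface is quantitatively close to umbilic; and (III) a dichotomy showing that either $|H|$ blows up in finite time and $M_t$ contracts to a round point, or the flow exists for all time and converges smoothly to a totally geodesic $\mathbb{S}^n$ inside $\mathbb{S}^{n+1}(1/\sqrt{\bar K})$.

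For Part~(I), write $\mathring A:=A-\tfrac1nHg$ for the traceless part of the second fundamental form, so that $|A|^2=|\mathring A|^2+\tfrac1n|H|^2$ and, for $n\ge3$, $g:=|A|^2-\tfrac1{n-1}|H|^2=|\mathring A|^2-\tfrac1{n(n-1)}|H|^2$. From the Gauss and Codazzi equations one derives the evolution equation $\partial_t H=\Delta H+(|A|^2+n\bar K)H$ and a Simons-type evolution equation for $|A|^2$, hence a heat-type equation for $g$ whose reaction term is cubic in the shape operator and linear in $\bar K$. The aim is to show, via the maximum principle, that $\{g<2\bar K\}$ (resp. $\{g<\tfrac43\bar K\}$ when $n=2$) is preserved; the crux is the algebraic claim that at any point where $g=2\bar K$ the zero-order part of $\partial_t g-\Delta g$ is nonpositive once the favourable gradient term is exploited. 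The gradient term is absorbed via the Kato-type inequality $|\nabla A|^2\ge\tfrac3{n+2}|\nabla H|^2$, while the reaction estimate rests on the sharp cubic (Okumura-type) inequality
\[
\left|\sum_{i,j,k}\mathring h_{ij}\,\mathring h_{jk}\,\mathring h_{ki}\right|\le\frac{n-2}{\sqrt{n(n-1)}}\,|\mathring A|^{3}
\]
for the traceless shape operator, together with the Gauss identity $R=n(n-1)\bar K+|H|^2-|A|^2$; the exceptional constants $\tfrac34,\tfrac43$ for $n=2$ arise precisely because the cubic term vanishes identically there. Since $M_0$ is compact the hypothesis also yields $|\mathring A|^2\le\varepsilon|H|^2+C_\varepsilon$ for some $\varepsilon<\tfrac1{n(n-1)}$ at $t=0$, and the same argument shows this linear pinching cone is preserved on all of $[0,T)$.

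For Part~(II), I would study $f_\sigma:=|\mathring A|^2\,(|H|^2+n\bar K)^{\sigma-1}$ for a small $\sigma\in(0,1)$ chosen compatibly with $\varepsilon$: combining Part~(I), the gradient estimate, and a Stampacchia (or Moser) iteration on $\int_{M_t}f_\sigma^{p}\,d\mu$ yields $|\mathring A|^2\le C_\sigma(|H|^2+n\bar K)^{1-\sigma}$ uniformly in time, after which Bernstein--Bando--Shi-type interpolation gives $|\nabla^{m}A|^2\le C_m(|H|^2+n\bar K)$ for every $m$. For Part~(III), the pinching forces, when $n\ge3$, a lower bound $\lambda_{\min}\ge c\,|H|$ for the principal curvatures wherever $|H|$ is large, so there the hypersurface is strictly convex; combined with Part~(II) this shows that either $|H|$ stays bounded on $[0,T)$, or $T<\infty$, $\min_{M_t}H^2\to\infty$, and the geometry rounds up uniformly. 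In the latter case the flow is of type~I; rescaling $\tilde F_s:=(2(T-t))^{-1/2}(F-p)$ and using that $\mathbb{S}^{n+1}(1/\sqrt{\bar K})$ is asymptotically Euclidean at small scales, Huisken's monotonicity formula, and the fact that Part~(II) makes the rescaled $|\mathring A|$ tend to $0$, one concludes that the rescaled flow converges to a round sphere $\mathbb{S}^n$; thus $F_0$ contracts to a round point in finite time. In the former case $T=\infty$, the flow is uniformly smooth by Parts~(II)--(III), and an energy estimate of the form $\tfrac{d}{dt}\int_{M_t}|\mathring A|^2\,d\mu\le-\delta\int_{M_t}|\mathring A|^2\,d\mu$ --- with $\delta>0$ produced by the balance between the reaction and gradient terms, fuelled by the ambient term $n\bar K$ in the equation for $H$ --- forces $\int_{M_t}|\mathring A|^2\,d\mu\to0$, hence $|\mathring A|\to0$ in $C^\infty$; therefore $M_t$ subconverges to a totally geodesic $\mathbb{S}^n(1/\sqrt{\bar K})$, and exponential decay of $|\mathring A|$ promotes this to full $C^\infty$ convergence.

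The step I expect to be the main obstacle is Part~(I): extracting \emph{exactly} the sharp constants $\tfrac1{n-1}$ and $2\bar K$ (and the $n=2$ values) from the maximum principle requires the optimal cubic inequality above and a meticulous accounting of every curvature term, since any non-optimal estimate gives only a strictly smaller pinching cone; the choice of the exponent $\sigma$ in Part~(II) is likewise tightly constrained by the pinching constant and must be tuned with care.
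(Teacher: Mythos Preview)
The paper does not give its own proof of Theorem~\ref{Huisken}; it is quoted from Huisken's 1987 paper \cite{H3} as background. Your outline is essentially the correct strategy of that paper, and it also matches the template the present paper uses to prove its own main results (Theorems~\ref{main2} and~\ref{t2}): preservation of a quadratic pinching via the scalar maximum principle using the gradient inequality $|\nabla A|^2\ge\tfrac{3}{n+2}|\nabla H|^2$ and the Okumura cubic estimate, followed by the decay estimate $|\mathring A|^2\le C(|H|^2+\bar K)^{1-\sigma}$ obtained from a Poincar\'e-type inequality based on the Simons identity and a Stampacchia iteration on $\int f_\sigma^p$.

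Where your route departs from both \cite{H3} and this paper is Part~(III). You propose a type-I blow-up analysis via Huisken's monotonicity formula and parabolic rescaling. Huisken's original argument in the sphere (and the argument in Sections~4--5 here, following Baker~\cite{B} and Lei--Xu~\cite{LX2}) does not use the monotonicity formula; instead one derives a gradient estimate of the form $|\nabla H|^2\le(\eta|H|^4+C_\eta)e^{-\sigma t}$, combines it with the positive Ricci lower bound $\operatorname{Ric}\ge c\,|H|^2$ coming from the preserved pinching and Bonnet--Myers to control $\operatorname{diam} M_t$, and concludes that $|H|_{\min}/|H|_{\max}\to 1$; the finite-time versus infinite-time dichotomy is then read off directly. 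Your monotonicity approach can in principle be made to work in a sphere background, but it requires a localized version of the formula and a careful small-scale comparison with Euclidean space that is not entirely standard, so as written that step is a sketch rather than a proof. The paper's route avoids this by never rescaling.

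Your remark that ``$\lambda_{\min}\ge c|H|$ wherever $|H|$ is large'' is also slightly off for the $n\ge3$ case here: the pinching $|A|^2<\tfrac{1}{n-1}|H|^2+2\bar K$ does not force positivity of all principal curvatures (indeed, the paper emphasizes that its pinching conditions give positive Ricci but not positive sectional curvature), so convexity is not the mechanism. What the pinching does give, via the Gauss equation, is the Ricci lower bound needed for Bonnet--Myers, and that is what drives the endgame.
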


For the mean curvature flow of arbitrary codimension in
the Euclidean space, Andrews and Baker \cite{AB} investigated the
convergence problem. In \cite{B}, Baker studied the mean curvature flow of arbitrary codimension in spheres and obtained the following result.
\begin{theorem}\label{AB2}
Let $F_0 :M^{n} \rightarrow \mathbb{S}^{n+p}(\frac{1}{\sqrt{\bar K}})$ be a smooth compact
submanifold in a sphere with constant curvature $\bar K$.
Assume $M$ satisfies
\begin{equation}
|A|^2\leq\begin{cases}
            \frac{4}{3n}|H|^2+\frac{2(n-1)}{3}\bar K, \ &n = 2,3, \\
            \frac{1}{n-1}|H|^2+2\bar K, \ &n \geq 4.
        \end{cases}
\end{equation}
Then
  the mean curvature flow with the initial value $F_0$
either converges to a round point in finite time, or
converges to a total geodesic sphere of $\mathbb{S}^{n+p}(\frac{1}{\sqrt{\bar K}})$ as
$t\rightarrow\infty$.
\end{theorem}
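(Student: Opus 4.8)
To prove Theorem~\ref{AB2}, the plan is to follow the scheme of Andrews and Baker \cite{AB} for the Euclidean background, adapted to the spherical setting as in \cite{B}. By standard parabolic theory the flow \textup{(1.1)} admits a unique smooth solution on a maximal time interval $[0,T)$, and $\max_{M_t}|A|^2\to\infty$ as $t\to T$ whenever $T<\infty$. Along the flow one has, schematically,
\begin{align*}
\frac{\partial}{\partial t}|A|^2&=\Delta|A|^2-2|\nabla A|^2+2(R_1+R_2)+\bar K\,\mathcal L_1(A,H),\\
\frac{\partial}{\partial t}|H|^2&=\Delta|H|^2-2|\nabla H|^2+2R_3+\bar K\,\mathcal L_2(A,H),
\end{align*}
where $R_1,R_2$ are the quartic ``$A*A$'' reaction terms, $R_3=\sum_{i,j}\langle H,A_{ij}\rangle^2\ge\tfrac1n|H|^4$, and $\mathcal L_1,\mathcal L_2$ are the contributions of the ambient curvature, quadratic in $A$. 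Writing $\mathring A=A-\tfrac1n\,g\otimes H$ for the trace-free second fundamental form, the hypothesis reads $|\mathring A|^2\le(c-\tfrac1n)|H|^2+d\bar K$, where $(c,d)$ is the pair appearing in the statement.

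First I would show that the pinching condition $Q:=c|H|^2+d\bar K-|A|^2\ge0$ is preserved along the flow, by Hamilton's maximum principle (its scalar form applied to $Q$, with the strong maximum principle used to exclude degenerate cases). From the evolution equations,
\[
\frac{\partial}{\partial t}Q=\Delta Q+2\big(|\nabla A|^2-c|\nabla H|^2\big)+2\big(cR_3-R_1-R_2\big)+\bar K\,\mathcal L(A,H),
\]
and two facts are needed on the set $\{Q=0\}$. The gradient term is nonnegative by the refined Kato inequality $|\nabla A|^2\ge\tfrac{3}{n+2}|\nabla H|^2$, which applies because $c\le\tfrac{3}{n+2}$ for every $n\ge2$ occurring in the statement. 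The reaction term must satisfy $cR_3-R_1-R_2+\bar K\,\mathcal L(A,H)\ge0$ whenever $H\ne0$ and $|A|^2=c|H|^2+d\bar K$; this rests on a sharp Cauchy--Schwarz-type bound for the vector-valued quartic terms $R_1+R_2$ in terms of $|H|^2$ and $|\mathring A|^2$, obtained by optimizing over the splitting of $A$ into its trace and trace-free parts and over the normal directions. It is exactly the sharp form of this estimate, combined with $R_3\ge\tfrac1n|H|^4$, that pins down $(c,d)$; the case distinction $n=2,3$ versus $n\ge4$ records which of two competing bounds on $R_1+R_2$ is binding, and the affine term $d\bar K$ is what is needed to absorb the ambient reaction $\bar K\,\mathcal L$ (at points with $H=0$ the pinching forces $|A|^2\le d\bar K$, so $Q\ge0$ there is immediate).

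With the pinching preserved, I would split according to $T$. If $T<\infty$, the submanifold contracts to a round point, by the mechanism of Huisken \cite{H3} and \cite{AB,B}: (i) an improved pinching estimate $|\mathring A|^2\le C\,|H|^{2-2\sigma}+C$ for some $\sigma>0$, proved by $L^p$ estimates and Stampacchia iteration via the Michael--Simon Sobolev inequality; and (ii) a gradient estimate $|\nabla A|^2\le\varepsilon|H|^4+C_\varepsilon$ for every $\varepsilon>0$; both uniform on $[0,T)$. Since $\max_{M_t}|A|^2\to\infty$ forces $\max_{M_t}|H|^2\to\infty$, rescaling the flow to keep $\max|H|^2$ fixed and using (i)--(ii) gives smooth subsequential convergence of the rescaled flows to a homothetically shrinking solution with $\mathring A\equiv0$, i.e.\ a round $n$-sphere; hence $M_t$ converges to a round point. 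If $T=\infty$, then one shows $|A|^2$ remains bounded, $|\mathring A|^2\to0$ exponentially, and $|H|^2$ cannot stay bounded away from $0$ without forcing finite-time extinction; thus $M_t$ converges smoothly, with exponential rate obtained by linearizing the evolution equations at the totally geodesic $\mathbb S^n(\tfrac1{\sqrt{\bar K}})$, to that great sphere.

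The main obstacle is the sharp algebraic estimate for $R_1+R_2$ behind the preservation step: because $A$ is normal-bundle valued rather than scalar, the quartic terms couple the trace-free part of $A$ across different normal directions, and extracting the optimal constant---with the correct dimension-dependent threshold and the matching ambient term $d\bar K$---requires a delicate optimization. A secondary technical hurdle is making the improved-pinching estimate genuinely uniform up to $T$, which is what lets the rescaling argument identify the blow-up limit as a round sphere rather than a lower-dimensional cylinder or a hyperplane.
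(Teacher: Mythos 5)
Your outline correctly reproduces the Andrews--Baker/Baker strategy on which the paper relies (and which it follows again for its own Theorem~\ref{main2}): preservation of the quadratic pinching $|A|^2\le c|H|^2+d\bar K$ by the maximum principle, using the Kato-type inequality $|\nabla A|^2\ge\tfrac{3}{n+2}|\nabla H|^2$ together with a sharp estimate on the quartic reaction terms, followed by the improved-pinching/gradient-estimate/rescaling dichotomy between finite-time extinction and long-time convergence to a totally geodesic sphere. The paper cites Theorem~\ref{AB2} from Baker without reproducing the argument, so there is no in-paper proof to compare against line by line, but your plan is the intended route; the only substantive caveats are that the quartic reaction-term estimate (the analogue of Lemma~\ref{eq}, which is the entire content of the preservation step) is flagged by you as the main obstacle rather than actually supplied, and your parenthetical that preservation is ``immediate'' at points where $H=0$ is not right---there one still must verify that $R_1$ is dominated by the ambient curvature contribution under $|A|^2\le d\bar K$, which is part of, not separate from, that same quartic estimate.
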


For $n \geq 4$, the compact submanifold  $M^n=\mathbb{S}^1(r)\times \mathbb{S}^{n-1}(s)\subset \mathbb{S}^{n+1}(1)\subset \mathbb{S}^{n+p}(1)$ with $r^2+s^2=1$ implies that the pinching conditions of Theorem \ref{Huisken} and Theorem \ref{AB2} are sharp. In fact, the sharp pinching condition means the linear relationship of $|A|^2$, $|H|^2$ and $\bar K$ is sharp. If the relationship is not linear, the pinching condition can be improved. Afterwards, Lei-Xu \cite{LX2} also obtained a sharp convergence theorem for mean curvature flow of high codimension in spheres. Set
\begin{equation}
\alpha (x) = n \bar K+ \frac{n}{2 ( n-1 )} x -
\frac{n-2}{2
   ( n-1 )} \sqrt{x^{2} +4 ( n-1 ) \bar K x},
\end{equation} they proved the following theorem.
\begin{theorem}\label{LX2}
Let $F_0 :M^{n} \rightarrow \mathbb{S}^{n+p}(\frac{1}{\sqrt{\bar K}})$ be an n-dimensional $(n\geq6)$ smooth compact
submanifold in a sphere with constant curvature $\bar K$.
Assume $M$ satisfies
\begin{equation}
|A|^2< \gamma(n,|H|,\bar K).
\end{equation}
Then
  the mean curvature flow with the initial value $F_0$
either converges to a round point in finite time, or
converges to a total geodesic sphere of $\mathbb{S}^{n+p}(\frac{1}{\sqrt{\bar K}})$ as
$t\rightarrow\infty$.

Here $\gamma(n,|H|,\bar K)$ is an explicit positive scalar defined by
$$\gamma(n,|H|,\bar K)=\min\{\alpha(|H|^2), \beta(|H|^2) \},$$
where
\begin{eqnarray*}
  \beta(x) &=&  \alpha(x_0)+ \alpha'(x_0)(x-x_0)+\frac12\alpha''(x_0)(x-x_0)^2, \\
  x_0 &=& \frac{2n+2}{n-4}\sqrt{n-1}(\sqrt{n-1}-\frac{n-4}{2n+2})^2\bar K.
\end{eqnarray*}
\end{theorem}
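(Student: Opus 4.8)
The plan is to follow the three-part strategy of Huisken \cite{H3} and of Andrews--Baker \cite{AB} and Baker \cite{B} --- preservation of a curvature pinching condition, a priori estimates, and a blow-up / long-time dichotomy --- but now carried out with the \emph{nonlinear} pinching cone $\{|A|^2<\gamma(n,|H|,\bar K)\}$ in place of a linear one. Throughout I would work with the trace-free second fundamental form $\Phi:=A-\frac1n\,g\otimes H$, so that $|\Phi|^2=|A|^2-\frac1n|H|^2$, and with the Simons-type evolution equations $\partial_t|H|^2=\Delta|H|^2-2|\nabla H|^2+2\mathcal P$ and $\partial_t|A|^2=\Delta|A|^2-2|\nabla A|^2+2\mathcal Q$, where $\mathcal P,\mathcal Q$ are the reaction terms built from the cubic curvature expressions and the ambient-curvature ($\bar K$) contributions.

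The heart of the proof is to show that $\{|A|^2<\gamma(|H|^2)\}$ is preserved by the flow and, crucially, \emph{improves} toward the umbilic ray. Writing $W=|A|^2-\gamma(|H|^2)$ with $\gamma=\min\{\alpha,\beta\}$, I would apply the maximum principle: at a first space-time point where $W=0$ one has $\nabla|A|^2=\gamma'\,\nabla|H|^2$ and $\Delta|A|^2-\gamma'\Delta|H|^2=\Delta W+\gamma''\,|\nabla|H|^2|^2$, so preservation reduces to the pointwise inequality
\[
\mathcal Q-\gamma'\,\mathcal P\;\le\;|\nabla A|^2-\gamma'|\nabla H|^2-\tfrac12\gamma''\,|\nabla|H|^2|^2,
\]
to be verified on the boundary cone $|A|^2=\gamma(|H|^2)$ after the gradient terms on the right are bounded below using the constraint $\nabla|A|^2=\gamma'\nabla|H|^2$ together with a Kato-type inequality. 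Here one needs the sharp codimension-$\ge2$ estimate for $\mathcal P$ and $\mathcal Q$ in terms of $|\Phi|^2$, $|H|^2$ and $\bar K$ (in the spirit of the Li--Li and Lu inequalities), and this is precisely what dictates the shape of $\gamma$: $\alpha$ is the optimal function for which the reaction of the associated Okumura-type inequality vanishes as $|H|\to\infty$, but $\alpha$ alone fails the differential inequality because the $-\tfrac12\gamma''|\nabla|H|^2|^2$ contribution has the wrong strength; so on $[0,x_0]$ one replaces $\alpha$ by its second-order Taylor polynomial $\beta$ at the balancing point $x_0$ --- making $\gamma$ of class $C^1$ with just enough concavity to absorb that term --- and one checks that the inequality then closes for $n\ge6$. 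Carrying the same computation a little further produces the quantitative improvement: for every $\varepsilon>0$, eventually $|\Phi|^2\le\varepsilon|H|^2+C_\varepsilon\bar K$, which is exactly what the singularity analysis will need.

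Granted the pinching, the a priori estimates are routine. A maximum-principle argument for a test quantity of the form $|\nabla A|^2+N\,|\Phi|^2\,(|H|^2+c\bar K)$ with $N$ large yields a gradient estimate $|\nabla A|^2\le\gamma_1|H|^4+c_1\bar K^2$, and then interpolation and Schauder estimates bound every $|\nabla^m A|$ by powers of $\max_{M_t}|H|$. The proof concludes with the dichotomy. If $\max_{M_t}|H|^2\to\infty$ at some finite time $T$, one rescales parabolically, applies Hamilton's compactness theorem, and uses the improved pinching to conclude that any blow-up limit is totally umbilic, nonflat and self-shrinking, hence a shrinking round sphere, so the original flow contracts to a round point. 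If instead $|H|$ stays bounded, the flow is immortal, the estimates give smooth subconvergence, the improved pinching forces $|\Phi|\to0$ so the limit is a round sphere, and analysing the ODE for the radius of that sphere under the flow in $\mathbb{S}^{n+p}(1/\sqrt{\bar K})$ shows it must be stationary, i.e.\ a totally geodesic sphere.

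I expect the one genuine obstacle to be the algebraic--differential inequality of the preservation step: checking, for the piecewise-defined $\gamma=\min\{\alpha,\beta\}$ and \emph{all} $n\ge6$, that the displayed inequality holds along the entire boundary cone --- the $C^1$-matching and the sign control at $x_0$, the regime $x\to\infty$ where $\gamma=\alpha$ and the ``optimal'' cancellation in $\mathcal Q$ must survive the genuine codimension contributions, and the regime $x\to0$ where the $\bar K$-terms dominate. Once this algebraic core is secured, the a priori estimates and the convergence dichotomy follow the templates of \cite{H3,AB,B}.
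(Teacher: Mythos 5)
The statement in question is quoted in the paper from Lei--Xu \cite{LX2} and is not actually proved in the source you were given; the paper proves its own Theorems~\ref{main2} and~\ref{t2} by the same machinery, so I compare against that. Your high-level outline --- preservation of pinching via the maximum principle at a first boundary contact, a quantitative improvement of the pinching, a priori estimates, and a convergence dichotomy --- is faithful to the architecture of Huisken, Andrews--Baker, Baker and Lei--Xu. Your observation that $\gamma=\min\{\alpha,\beta\}$ is $C^{1}$ (in fact $C^{1,1}$) because $\beta$ is the second Taylor polynomial of $\alpha$ at $x_0$, and that the whole point of passing to $\beta$ on $[0,x_0]$ is to tame $\alpha''$ (which blows up like $x^{-3/2}$ near $x=0$ and makes the $-\tfrac12\gamma''|\nabla|H|^2|^2$ contribution in the reaction--diffusion inequality too negative), is the correct diagnosis and is precisely the algebraic insight behind Lei--Xu's construction.

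There is, however, a genuine gap in how you pass from preservation to the \emph{improved} pinching $|\mathring A|^2\le\varepsilon|H|^2+C_\varepsilon\bar K$. You write that this follows by ``carrying the same computation a little further,'' but the pointwise maximum principle at a first contact yields only preservation, not a time-decaying, quantitatively improving estimate. In all the cited works (and in Sections~4--5 of this paper) the improvement is obtained by a fundamentally different argument: one introduces $f_\sigma=|\mathring A|^2/\mathring a^{1-\sigma}$, derives a reaction--diffusion inequality for it with a dangerous positive term $\sigma|A|^2 f_\sigma$, so that the ordinary maximum principle fails; then one controls $\int f_\sigma^p$ via a Poincar\'e-type inequality built from the Simons identity (see Lemma~\ref{Z2} and Lemma~\ref{Poincare1} here), and finally runs a Stampacchia/De~Giorgi iteration to pass from an $L^p$ bound to an $L^\infty$ bound, yielding the exponential decay $|\mathring A|^2\le C(|H|^2+\bar K)^{1-\sigma}e^{-2\sigma\bar K t}$. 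That iteration needs its own lower bound on $n\bar K|\mathring A|^2-R_1+R_3$ and a coupled choice of $p$ and $\sigma$; it does not fall out of the preservation computation. Relatedly, your proposed gradient bound $|\nabla A|^2\lesssim|H|^4+\bar K^2$ lacks the $e^{-\sigma t}$ decay that the Myers-type diameter estimate and the conclusion $\min_M|H|/\max_M|H|\to1$ actually rely on. Finally, the paper (following Baker and Lei--Xu) closes the argument by combining the decaying pinching, decaying gradient estimate, and a lower Ricci bound with Myers' theorem, rather than by parabolic blow-up plus compactness classification; your alternative blow-up route can be made to work in the finite-time case because the improved pinching forces rescaled limits to be totally umbilic, but it does not circumvent the Stampacchia step, which is where the quantitative improvement is actually produced.
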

The scalar $\gamma(n,|H|,\bar K)$ in Theorem \ref{LX2} satisfies the following: (i) $\gamma(n,|H|,\bar K)>\frac{1}{n-1}|H|^2+2\bar K$; (ii) $\gamma(n,|H|,\bar K)>\frac76\sqrt{n-1}\bar K$; (iii) $\gamma(n,|H|,\bar K)=\alpha(|H|^2)$ when $|H|^2\geq x_0$.

Lawson-Simons \cite{LS} proved the topological sphere theorem in the unite sphere under the pinching condition $| A |^2<2\sqrt{n-1}$. Applying the convergence results of Hamilton and
Brendle for Ricci flow and the Lawson-Simons formula for the
nonexistence of stable currents, Xu and Zhao \cite{MR2550209} proved
a differentiable sphere theorem in spheres.
\begin{theorem}\label{ds} Let $M$ be an $n$-dimensional ($n \geq 4$)
oriented complete submanifold in the unit sphere $\mathbb{S}^{n+q}$.
Then\\
\hspace*{2mm}$(i)$ if $n=4,5,6$ and $\sup_M(| A |^2-2\sqrt{n-1})<0$, then $M$ is diffeomorphic to $\mathbb{S}^{n}$;\\
\hspace*{2mm}$(ii)$ if $n\geq7$ and $|A|^2<2\sqrt{2}$, then $M$ is
diffeomorphic to  $\mathbb{S}^{n}$.\end{theorem}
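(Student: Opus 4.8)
The plan is to determine the topological type of $M$ via the Lawson-Simons theory of stable currents, and then to upgrade this to a diffeomorphism by a dimension-dependent argument, using throughout the Gauss equation $R_{ijkl}=\delta_{ik}\delta_{jl}-\delta_{il}\delta_{jk}+\langle h_{ik},h_{jl}\rangle-\langle h_{il},h_{jk}\rangle$ for the induced metric $g_0$ on $M^n\subset\mathbb{S}^{n+q}$, where $h$ is the vector-valued second fundamental form, so $|A|^2=|h|^2$ and $H=\operatorname{tr}h$. Since $2\sqrt2\le2\sqrt{n-1}$ for $n\ge3$, under either hypothesis one has $|A|^2<2\sqrt{n-1}$. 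First I would reduce to the compact case: a sharp Cauchy-Schwarz estimate of $\operatorname{Ric}(e_1)=(n-1)+\langle h_{11},H\rangle-\sum_{j=1}^{n}|h_{1j}|^2$ shows that $|A|^2<2\sqrt{n-1}$ forces $\operatorname{Ric}>0$ (the bound is sharp, the extremal model being a Clifford torus $\mathbb{S}^1\times\mathbb{S}^{n-1}$ with $|A|^2\equiv2\sqrt{n-1}$), so Bonnet-Myers makes the complete manifold $M$ compact.

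\smallskip
\noindent\emph{Step 1 ($M$ is a homotopy sphere).} Apply the Lawson-Simons second variation formula with the test fields obtained by projecting the parallel vector fields of $\mathbb{R}^{n+q+1}\supset\mathbb{S}^{n+q}$ onto $M$. For each $p$ with $1\le p\le n-1$, summing the resulting index forms shows that $|A|^2<2\sqrt{n-1}$ makes every mass-minimizing integral $p$-current unstable; by the Federer-Fleming compactness theorem and the interior regularity of minimizers, $H_p(M;\mathbb{Z})=0$ for $1\le p\le n-1$, and the case $p=1$ applied on the universal cover yields $\pi_1(M)=0$. Hence $M$ is a homotopy $n$-sphere.

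\smallskip
\noindent\emph{Step 2 (upgrading to a diffeomorphism).} For $n=5,6$ the groups $\Theta_5,\Theta_6$ of homotopy spheres vanish, so $M$ is already diffeomorphic to $\mathbb{S}^n$ and the bound $|A|^2<2\sqrt{n-1}$ alone suffices. For $n=4$, expanding the isotropic curvature form by the Gauss equation,
\begin{align*}
&R_{1313}+R_{1414}+R_{2323}+R_{2424}-2R_{1234}\\
&\qquad=4+\langle h_{11}+h_{22},\,h_{33}+h_{44}\rangle-|h_{13}+h_{24}|^2-|h_{14}-h_{23}|^2\ge 4-|A|^2,
\end{align*}
so $|A|^2<2\sqrt3<4$ gives positive isotropic curvature; Hamilton's classification of compact four-manifolds with positive isotropic curvature, applied to the homotopy sphere $M$, yields $M\cong\mathbb{S}^4$. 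For $n\ge7$, where exotic spheres exist, I would run the Ricci flow: the binding case ($\lambda=0$) of the $\mathrm{PIC}_1$ inequality reads
\[
R_{1313}+R_{2323}=2+\langle h_{11}+h_{22},\,h_{33}\rangle-|h_{13}|^2-|h_{23}|^2\ge 2-\tfrac{1}{\sqrt2}|A|^2,
\]
so $|A|^2<2\sqrt2$ makes $(M,g_0)$ satisfy $\mathrm{PIC}_1$ (equivalently, $M\times\mathbb{R}$ has positive isotropic curvature). Brendle's convergence theorem then shows that the normalized Ricci flow with initial data $g_0$ converges to a metric of constant positive curvature, so $M$ is diffeomorphic to a spherical space form $\mathbb{S}^n/\Gamma$; since $M$ is a homotopy sphere, $\Gamma$ is trivial and $M\cong\mathbb{S}^n$.

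\smallskip
The part I expect to demand the most care is the extraction of the \emph{sharp} Gauss-equation thresholds: $|A|^2<2\sqrt{n-1}$ for positive Ricci curvature (which is also precisely the Lawson-Simons threshold, with the same Clifford-torus extremum), versus the strictly smaller $|A|^2<2\sqrt2$ for $\mathrm{PIC}_1$, whose worst case is governed by a different model submanifold. This gap is exactly why the pinching must weaken for $n\ge7$, and why the case $n=4$ works under the larger bound $2\sqrt{n-1}$ only because $2\sqrt3<4$. Once these estimates and the reduction to the compact case are in hand, Steps 1 and 2 are essentially assemblies of the cited results of Lawson-Simons, Federer-Fleming, Hamilton, and Brendle.
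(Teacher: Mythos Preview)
This theorem is not proved in the paper; it is quoted in the introduction as a result of Xu and Zhao \cite{MR2550209}, with no argument given beyond the one-sentence summary ``Applying the convergence results of Hamilton and Brendle for Ricci flow and the Lawson-Simons formula for the nonexistence of stable currents.'' Your sketch follows exactly this route---Lawson--Simons to kill intermediate homology and get a homotopy sphere, then Hamilton's PIC theorem in dimension $4$, the triviality of $\Theta_5,\Theta_6$ in dimensions $5,6$, and Brendle's $\mathrm{PIC}_1$ convergence theorem for $n\ge7$---so in spirit your proposal agrees with the method the paper attributes to Xu--Zhao.

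Two places in your write-up would need more care in an actual proof. First, the reduction to compactness: the implication ``$|A|^2<2\sqrt{n-1}\Rightarrow\operatorname{Ric}>0$'' is not a one-line Cauchy--Schwarz; it requires the sharp Ricci lower bound of Shiohama--Xu (cited in the paper as \cite{MR1458750}), and one must check that the right-hand side stays positive over the whole range of $|H|$ allowed by $|A|^2<2\sqrt{n-1}$, not just at $H=0$. Second, for $n\ge7$ you only verify the $\lambda=0$ slice of the $\mathrm{PIC}_1$ condition and assert it is ``binding''; establishing $\mathrm{PIC}_1$ requires the full inequality $R_{1313}+\lambda^2R_{1414}+R_{2323}+\lambda^2R_{2424}-2\lambda R_{1234}>0$ for all $\lambda\in[-1,1]$, so you either need to argue why the minimum over $\lambda$ occurs at $\lambda=0$ under your hypothesis or carry out the estimate for general $\lambda$. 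These are genuine technical steps, but the overall architecture of your argument is correct and matches what the paper describes.
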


 Motivated by the rigidity and topological sphere theorems for submanifolds in spheres, Lei-Xu proposed the conjecture that $\alpha(|H|^2)$ is the optimal pinching condition for mean curvature flow in spheres.
\begin{conjecture}
  Let $M_{0}$ be an n-dimensional complete submanifold in the sphere
  $\mathbb{S}^{n+p}(\frac{1}{\sqrt{\bar K}})$. Suppose that
  $\sup_{M_{0}} \big( | h |^{2} - \alpha (| H |^2) \big) <0$. Then the mean
  curvature flow with initial value $M_{0}$ converges to a round point in
  finite time, or converges to a totally geodesic sphere as $t \rightarrow
  \infty$. In particular, if $| A |^{2} <2 \sqrt{n-1}\bar K$, $M_{0}$ is diffeomorphic to $\mathbb{S}^{n}$.
\end{conjecture}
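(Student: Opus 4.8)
The plan is to carry out the curvature-pinched mean curvature flow program of Huisken, Andrews--Baker, Baker and Lei--Xu, pushing the pinching function all the way to the conjectured optimal $\alpha(|H|^2)$, i.e.\ replacing the $\gamma$ of Theorem~\ref{LX2} by $\alpha$. First I would collect, from the Simons-type identities for submanifolds, the evolution equations along the flow in $\mathbb{S}^{n+p}(1/\sqrt{\bar K})$. Writing $h$ for the second fundamental form, $H=\operatorname{tr}h$ for the mean curvature and $\mathring h=h-\tfrac1n g\otimes H$ for its trace-free part, one obtains schematically
\begin{gather*}
\partial_t|h|^2=\Delta|h|^2-2|\nabla h|^2+2\big(R_1+R_2\big)+2\bar K\big(n|h|^2-|H|^2\big),\\
\partial_t|H|^2=\Delta|H|^2-2|\nabla H|^2+2R_3+2n\bar K|H|^2,
\end{gather*}
where $R_1=\sum_{\mu,\nu}\langle h_\mu,h_\nu\rangle^2$ and $R_2=\sum_{\mu,\nu}|h_\mu h_\nu-h_\nu h_\mu|^2$ are the quartic reaction terms and $R_3$ is the corresponding reaction term of $|H|^2$; combining them produces the evolution of $U:=|h|^2-\alpha(|H|^2)$, in which the chain rule contributes the extra term $-\alpha''(|H|^2)\,|\nabla|H|^2|^2$.

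The heart of the proof is to show that $\sup_{M_0}U<0$ is preserved under the flow. By the parabolic maximum principle this reduces to a pointwise algebraic inequality valid wherever $U=0$,
\begin{multline*}
-2|\nabla h|^2+2\alpha'(|H|^2)|\nabla H|^2+\alpha''(|H|^2)|\nabla|H|^2|^2\\
+2\big(R_1+R_2-\alpha'(|H|^2)R_3\big)+\bar K(\text{lower order})\le0 .
\end{multline*}
The gradient terms would be handled by the refined Kato inequality $|\nabla h|^2\ge\tfrac{3}{n+2}|\nabla H|^2$; after splitting $h=\mathring h+\tfrac1n g\otimes H$, the quartic combination $R_1+R_2-\alpha'R_3$ would be bounded by B\"ottcher--Wenzel-type estimates on $\sum_{\mu,\nu}\langle\mathring h_\mu,\mathring h_\nu\rangle^2$ and $\sum_{\mu,\nu}|\mathring h_\mu\mathring h_\nu-\mathring h_\nu\mathring h_\mu|^2$ in terms of $|\mathring h|^2$ and $|H|^2$; and the resulting inequality at $U=0$ then collapses, by the very construction of $\alpha$, to the nonnegativity of the discriminant $x^2+4(n-1)\bar Kx$. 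I expect the main obstacle to be the regime of small $|H|$, where $\alpha'(|H|^2)\to-\infty$ and $\alpha''(|H|^2)\to+\infty$: although the leading singular parts of $2\alpha'|\nabla H|^2$ and $\alpha''|\nabla|H|^2|^2$ have opposite signs and largely cancel, controlling the remainder --- the decisive new point --- seems to require either a refined tensor maximum principle for $\mathring h$ (in the spirit of the Hamilton--Andrews maximum principle for systems) or a new auxiliary function adapted to the singularity of $\alpha$ at the origin. This is exactly the difficulty that Lei--Xu sidestepped by replacing $\alpha$ near the origin with its quadratic minorant $\beta$, and it is why the full statement is still only a conjecture.

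Granting that the sharp pinching is preserved, the remaining steps are by now routine. A Stampacchia iteration (De Giorgi--Nash--Moser, via the Michael--Simon Sobolev inequality and the evolution inequality for $f_\sigma:=\big(|\mathring h|^2-\varepsilon|H|^2-C_\varepsilon\bar K\big)(|H|^2+\text{const})^{\sigma-1}$) shows that the pinching improves, $|\mathring h|^2\le C(\varepsilon)(|H|^2+\bar K)^{1-\delta}$ for some $\delta>0$, so $M_t$ is close to totally umbilic wherever the curvature is large; together with Huisken's gradient estimate $|\nabla h|^2\le\varepsilon|H|^4+C_\varepsilon\bar K^2$ this yields the dichotomy at the maximal time $T$. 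If $\max_{M_t}|H|\to\infty$ then $T<\infty$, and after parabolic rescaling and use of Huisken's monotonicity formula the rescaled flows converge to a shrinking round sphere, so $M_t$ contracts to a round point; if $\max_{M_t}|H|$ stays bounded then $T=\infty$, the flow is uniformly smooth, $|\mathring h|\to0$, and it converges --- subsequentially first, then fully by a {\L}ojasiewicz--Simon argument --- to a totally geodesic $\mathbb{S}^n\subset\mathbb{S}^{n+p}(1/\sqrt{\bar K})$. In either case $M_0$ is diffeomorphic to $\mathbb{S}^n$.

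For the differentiable sphere assertion, elementary calculus on the formula for $\alpha$ gives $\min_{x\ge0}\alpha(x)=2\sqrt{n-1}\,\bar K$, attained at $x=\sqrt{n-1}\,(\sqrt{n-1}-1)^2\bar K$, so $|A|^2<2\sqrt{n-1}\,\bar K$ implies $|A|^2<\alpha(|H|^2)$ everywhere. Since (as in the proof of Theorem~\ref{ds}, cf.~\cite{MR2550209}) this pinching also forces $\operatorname{Ric}>0$ and hence compactness of $M_0$ by Myers' theorem, the convergence theorem applies and yields $M_0$ diffeomorphic to $\mathbb{S}^n$, sharpening both Theorem~\ref{ds} and Theorem~\ref{LX2}.
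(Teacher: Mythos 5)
This statement is labeled a \emph{Conjecture} in the paper (attributed to Lei--Xu); the paper does not prove it, and your proposal does not close the gap either. What the paper actually proves is Theorem~\ref{main2}, which substitutes for $\alpha$ a strictly weaker pinching function: a convex combination $b(x)=(1-\delta)\big(\tfrac{x}{n-1}+2\bar K\big)+\delta\,\alpha(x)$ for $n=4,5,6$, and $\sqrt{\big(\tfrac{x}{n-1}+2\bar K\big)^2+(2n-4)\bar K^2}$ for $n\geq7$. Both pinching curves are designed precisely so that the zeroth-order reaction term in the evolution of $U=|\mathring A|^2-\mathring a(|H|^2)+\epsilon\omega$ can be shown nonpositive \emph{for all} $x\geq0$ (Lemma~\ref{negative} and Lemma~\ref{app2}), which is exactly the step that fails when one takes the sharp $\alpha$.

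You have correctly located the obstruction: near $x=|H|^2=0$ one has $\alpha'(x)\sim -\tfrac{n-2}{2(n-1)}\sqrt{(n-1)\bar K/x}\to-\infty$ and $\alpha''(x)\to+\infty$, and it is not known how to control the resulting sign-indefinite combination $2\alpha'|\nabla H|^2+\alpha''|\nabla|H|^2|^2$ together with the reaction terms uniformly in this regime. Your proposal acknowledges this and suggests ``a refined tensor maximum principle for $\mathring h$'' or ``a new auxiliary function adapted to the singularity,'' but supplies neither; absent such an ingredient, the preservation-of-pinching step --- the only nonroutine step, as you yourself note --- is simply not established, so this cannot be counted as a proof of the Conjecture. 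The downstream machinery you outline (Stampacchia iteration for $f_\sigma$, gradient estimate, diameter control via Myers and positive Ricci from the Shiohama--Xu estimate, convergence dichotomy) does match the route the paper uses for its weaker Theorem~\ref{main2}, and your computation that $\min_{x\geq0}\alpha(x)=2\sqrt{n-1}\,\bar K$ at $x=\sqrt{n-1}(\sqrt{n-1}-1)^2\bar K$ is correct; but the ``in particular'' clause of the Conjecture relies on the preservation step you have not proved, so the sphere theorem consequence is likewise not obtained. In short: the proposal is a reasonable \emph{program} that correctly mirrors the known route, but the Conjecture remains open, both in the paper and in your attempt.
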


For mean curvature flow of submanifolds in hyperbolic spaces,  Liu-Xu-Ye-Zhao \cite{LXYZ} proved the convergence theorem.
Recently, Lei-Xu \cite{LX} proved a convergence theorem of arbitrary codimension in hyperbolic spaces that the initial submanifold $M^n$ of
dimension $n(\ge6)$ under the optimal pinching condition $
|A|^2 < \alpha(|H|^2).$
Note that initial submanifolds in the almost all convergence
results implies the positive sectional curvatures. However, Lei-Xu's convergence theorems imply
that the Ricci curvatures of the initial submanifolds are positive, but don't imply
the positivity of the sectional curvatures. Therefore, their
convergence theorems also imply the differentiable sphere theorems
for submanifolds with positive Ricci curvatures. For other results and applications for the mean curvature flow, we refer the readers to \cite{BN,H2,H4,LX4,LXYZ1,Sm}.

Motivated by above theorems, we investigate the submanifold $M^n$ $(n\geq3,p=1$ or  $n\geq4)$pinched by a sharp curvature pinching condition for the mean curvature flow of arbitrary codimension in the sphere ${S}^{n+p}(\frac{1}{\sqrt{\bar K}})$.
Putting
\begin{equation}
  b(x)=(1-\delta)\big(\frac{x}{n-1}+2 \bar K\big)+\delta \alpha(x),\ x\geq0,
\end{equation}
where $\delta=\frac{\sqrt{12n+9}-7}{2(n-2)},\ n=4,5,6.$
The pinching condition $b(|H|^2)$ is obvious sharp and satisfies $b(|H|^2)> \frac{|H|^2}{n-1}+2\bar K.$ Then we prove the following sharp convergence theorem.
\begin{thm}\label{main2}
Let $F_0 :M \rightarrow \mathbb{S}^{n+p}(\frac{1}{\sqrt{\bar K}})$ be an n-dimensional $(n\geq4)$ smooth compact submanifold immersed in the sphere. If $M$ satisfies
\begin{equation}
|A|^2\leq\begin{cases}
            \  b(|H|^2), \ &n=4,5,6, \\
            \\
            \sqrt{\big(\frac{|H|^2}{n-1}+2\bar K\big)^2+(2n-4)\bar K^2}, \ &n \geq 7.
        \end{cases}
\end{equation}
then the mean curvature flow with the initial value $F_0$  converges to a round point in finite time, or
converges to a total geodesic sphere of $\mathbb{S}^{n+p}(\frac{1}{\sqrt{\bar K}})$ as
$t\rightarrow\infty$.
\end{thm}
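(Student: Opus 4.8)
The plan is to follow the three-step scheme underlying all the convergence theorems quoted above (Huisken \cite{H3}, Andrews--Baker \cite{AB}, Baker \cite{B}, Lei--Xu \cite{LX2}). After rescaling the ambient metric so that $\bar K=1$, set
\[
\phi(x)=\begin{cases} b(x), & n=4,5,6,\\ \sqrt{\big(\tfrac{x}{n-1}+2\big)^2+(2n-4)}, & n\ge 7,\end{cases}
\]
and note that in both cases $\phi(x)>\tfrac{x}{n-1}+2>\tfrac{x}{n}$, so $\{\,|A|^2\le\phi(|H|^2)\,\}$ is a genuine pinching region lying strictly between the totally umbilic locus and Baker's cone. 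The three steps are: (a) the pinching $|A|^2\le\phi(|H|^2)$ is preserved by the flow; (b) it improves, so that the tracefree second fundamental form $|\mathring A|^2:=|A|^2-\tfrac1n|H|^2$ becomes negligible relative to $|H|^2+1$ as $t$ grows; (c) from the improved pinching one runs the standard dichotomy --- either $\max_{M_t}|H|^2\to\infty$ at a finite time $T$, in which case a rescaling of $M_t$ converges smoothly to a shrinking round $n$-sphere so that the flow contracts to a round point, or the flow exists for all $t$ and converges to a totally geodesic sphere of $\mathbb{S}^{n+p}(1)$.

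For step (a) I would start from the evolution equations on $M_t$ in the space form, which have the schematic form $\partial_t|H|^2=\Delta|H|^2-2|\nabla H|^2+2\mathcal R_1+2n|H|^2$ and $\partial_t|A|^2=\Delta|A|^2-2|\nabla A|^2+2\mathcal R_2+2(n|A|^2-|H|^2)$, where $\mathcal R_1,\mathcal R_2$ are the quartic-in-$A$ reaction terms of Andrews--Baker (including the normal curvature contribution). Writing $Q=|A|^2-\phi(|H|^2)$ one obtains
\begin{align*}
\partial_tQ-\Delta Q={}&-2|\nabla A|^2+2\phi'(|H|^2)|\nabla H|^2+\phi''(|H|^2)\big|\nabla|H|^2\big|^2\\
&+2\mathcal R_2-2\phi'(|H|^2)\mathcal R_1+(\text{lower-order ambient terms}).
\end{align*}
Since $\phi$ is convex and satisfies $\phi'(x)+2x\phi''(x)\le\tfrac{3}{n+2}$ for all $x\ge0$ (valid for $n\ge4$; here the explicit value of $\delta$, resp.\ of the coefficient $2n-4$, enters), while $|\nabla A|^2\ge\tfrac{3}{n+2}|\nabla H|^2$ by the gradient inequality of \cite{AB}, the gradient terms are nonpositive. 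Hence on the set $\{Q=0\}$ it remains to verify a pointwise algebraic inequality in the eigenvalues of the shape operators, $|H|^2$ and $1$; this inequality --- which is precisely the tangency condition singling out $\alpha$ (and hence $b$ and the square-root function) as optimal barriers, and which is what genuinely fixes the constant $\delta$ --- is the heart of the matter and the step I expect to be the main obstacle, being the computation that is new relative to \cite{B,LX2}. The maximum principle then gives preservation, and the strong maximum principle forces $M$ to be a great sphere in the equality case.

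For step (b) I would use strict preservation to extract a uniform gap: after any positive time, $|A|^2\le\phi(|H|^2)-\varepsilon_0(|H|^2+1)$ for some fixed $\varepsilon_0>0$. Then, following the Stampacchia (De Giorgi--Nash--Moser) iteration of Huisken \cite{H3} and Andrews--Baker \cite{AB}, in the higher-codimension form of Baker \cite{B}, one studies a test quantity of the type $f_\sigma=|H|^{\sigma-2}\big(|\mathring A|^2-\eta|H|^2-\eta\big)$ for small $\sigma,\eta>0$ (on the region where $|H|$ is bounded away from $0$, the complementary case being treated separately as in \cite{B}), derives its evolution inequality, absorbs the bad terms using the gap together with the Michael--Simon Sobolev inequality on $M_t$, and concludes that $\sup_{M_t}(f_\sigma)_+$ stays bounded. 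Consequently $|\mathring A|^2\le\eta|H|^2+C(\eta)$ for $t$ large, so the flow is eventually as pinched as a round sphere.

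Finally, in step (c): if $\sup_{M_t}|H|^2$ is unbounded it must blow up at a finite $T$; the improved pinching makes the flow asymptotically totally umbilic and the ambient term $\bar K$ negligible at the blow-up scale, and the Bernstein--Bando--Shi-type bounds for all covariant derivatives of $A$ (which follow from the pinching) furnish a smooth Type-I blow-up limit, necessarily a shrinking round sphere; hence $M_t$ shrinks to a round point. If instead $\sup_{M_t}|H|^2$ stays bounded, then $T=\infty$, and from $\int_0^\infty\!\!\int_{M_t}|H|^2\,d\mu\,dt=\operatorname{Vol}(M_0)-\lim_{t\to\infty}\operatorname{Vol}(M_t)<\infty$ together with the improved pinching one deduces $|A|\to0$, so $M_t$ converges to a totally geodesic sphere of $\mathbb{S}^{n+p}(1)$. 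Undoing the rescaling of $\bar K$ completes the proof.
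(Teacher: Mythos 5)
Your three-step scheme --- preservation of pinching by the maximum principle, decay of $|\mathring A|^2$ relative to $|H|^2+\bar K$ by Stampacchia iteration, and the round-point / totally-geodesic dichotomy --- is exactly the architecture of the paper, so the high-level outline is accurate. The genuine gap is that you have explicitly left unproven the one ingredient that is new, namely the zeroth-order reaction-term inequality on the barrier $\{|A|^2=\phi(|H|^2)\}$, and that inequality \emph{is} the theorem. In the paper this reduces, after using Lemma~\ref{app}(vi), to the purely algebraic assertion of Lemma~\ref{negative},
\[
a(\mathring a-x\mathring a')-n\bar K(\mathring a+x\mathring a')+P_2\Big(2\mathring a-\tfrac{x}{n}+x\mathring a'\Big)-\tfrac32P_2^2<0,
\]
with $x=|H|^2$ and $P_2=\sum_{\alpha>1}(\mathring h_{ij\alpha})^2$, viewed as a quadratic in $P_2$. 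Verifying that its discriminant is negative is exactly what pins down the coefficient $2n-4$ under the square root in $a$, and this requires the six ad hoc estimates of Lemma~\ref{app} together with a case analysis over $n\ge66$, $13\le n\le65$, $9\le n\le12$, and the borderline cases $n=8$ and $n=7$ (the last two handled by explicit numerical optimization). Likewise, for $n=4,5,6$ the analogous discriminant computation in Section~5, $\delta^2(n-2)^2+7\delta(n-2)+4-3(n-2)\le0$, is what forces the specific $\delta=\frac{\sqrt{12n+9}-7}{2(n-2)}$ in the definition of $b$. None of this follows from convexity of $\phi$ or from your stated bound $\phi'+2x\phi''\le\tfrac{3}{n+2}$; it is a separate, delicate computation that your sketch acknowledges but does not supply, and it is where the content of the result lies.

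Two lesser points. Your schematic evolution equations have the labels of the two reaction terms swapped and the signs of the linear ambient terms in the $|A|^2$ equation reversed; the paper's version is $\partial_t|A|^2=\Delta|A|^2-2|\nabla A|^2+2R_1+4\bar K|H|^2-2n\bar K|A|^2$, and the sign matters because it is exactly these terms that are rearranged, together with $R_1-\tfrac1nR_2$, into the quantity bounded by Lemma~\ref{negative}. Also, the iteration quantity the paper actually uses is $f_\sigma=|\mathring A|^2/\mathring a^{1-\sigma}$ rather than an additive $|H|^{\sigma-2}(|\mathring A|^2-\eta|H|^2-\eta)$; the multiplicative form is the one whose evolution inequality couples cleanly with the Poincar\'e-type estimate of Lemma~\ref{Poincare1}, which in turn rests on the Simons-identity bound $n\bar K|\mathring A|^2-R_1+R_3\ge\tfrac n2|\mathring A|^2(\epsilon|A|^2-\sqrt{2n}\bar K)$ of Lemma~\ref{Z2}. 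These are repairable, but they again depend on the same estimates from Lemma~\ref{app} that your proposal leaves unverified.
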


\begin{remark}
\rm{(i)} Consider the compact submanifold  $M^n=\mathbb{S}^1(r)\times \mathbb{S}^{n-1}(s)\subset \mathbb{S}^{n+1}(1)\subset \mathbb{S}^{n+p}(1)$ with $r^2+s^2=1$. We have $|A|^4 - \big(\frac{|H|^2}{n-1}+2\big)^2-(2n-4)=\frac{(n-1)^2-1}{(n-1)^2}\cdot\frac{s^4}{r^4}$, which implies that the pinching condition is sharp.\\
\rm{(ii)} Since $\sqrt{\big(\frac{|H|^2}{n-1}+2\bar K\big)^2+(2n-4)\bar K^2}> \frac{|H|^2}{n-1}+2\bar K$, this convergence theorem improves Theorem \ref{Huisken} and Theorem \ref{AB2}. In fact, whatever the form of the pinching condition is, the relationship of $|A|^2$ and $|H|^2$, i.e., $\frac{1}{n-1}$ is optimal. So this pinching condition largely improved the proportion of $\bar K$.\\
\rm{(iii)} This convergence theorem implies that the Ricci curvatures of the initial submanifold is positive, but doesn't imply
the positivity of the sectional curvature.\\
\rm{(iv)} For $n\geq3,p=1$, the mean curvature flow of hypersurfaces in the sphere convergence under the sharp condition  $$|A|^2\leq\sqrt{\Big(\frac{|H|^2}{n-1}+2\bar K\Big)^2+(2n-4)\bar K^2}.$$
\end{remark}

In particularity, the convergence theorem implies a sharp differentiable sphere theorem in the sphere. Noting that $\big(\frac{|H|^2}{n-1}+2\big)^2+2n-4\geq2n$, we obtain a new
differentiable sphere theorem which improves Theorem \ref{ds}.
\begin{corollary} Let $M$ be an $n$-dimensional ($n \geq 7$) smooth
compact submanifold in the unit sphere $\mathbb{S}^{n+q}$.
If $|A|^2\leq\sqrt{2n}$, then $M$ is
diffeomorphic to  $\mathbb{S}^{n}$.\end{corollary}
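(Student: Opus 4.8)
The plan is to obtain the corollary as an immediate consequence of the $n\ge 7$ case of Theorem~\ref{main2}, via a pointwise elementary comparison. Since $M$ is immersed in the unit sphere $\mathbb{S}^{n+q}$ we have $\bar K=1$. At every point of $M$, using $\tfrac{|H|^2}{n-1}+2\ge 2$,
\[
\Big(\tfrac{|H|^2}{n-1}+2\Big)^2+(2n-4)\ \ge\ 4+(2n-4)\ =\ 2n .
\]
Hence the hypothesis $|A|^2\le\sqrt{2n}$ forces, pointwise,
\[
|A|^2\ \le\ \sqrt{2n}\ \le\ \sqrt{\Big(\tfrac{|H|^2}{n-1}+2\Big)^2+(2n-4)}
=\sqrt{\Big(\tfrac{|H|^2}{n-1}+2\bar K\Big)^2+(2n-4)\bar K^2},
\]
which is precisely the curvature pinching condition required by Theorem~\ref{main2} in the range $n\ge 7$.

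Consequently Theorem~\ref{main2} applies to the immersion $F_0\colon M\to\mathbb{S}^{n+q}$: the mean curvature flow with initial value $M$ either converges to a round point in finite time, or converges smoothly to a totally geodesic $\mathbb{S}^n\subset\mathbb{S}^{n+q}$ as $t\to\infty$. In the latter case the flow is a smooth isotopy carrying $M$ onto the standard $\mathbb{S}^n$. In the former case one applies the usual parabolic rescaling about the singular point (as in Huisken~\cite{H3}, Andrews--Baker~\cite{AB}, Baker~\cite{B}): the pinching is scale invariant and improves along the flow, so for $t$ sufficiently close to the maximal time $M_t$ is, after rescaling, a normal graph of small $C^1$-norm over a round $n$-sphere, hence diffeomorphic to $\mathbb{S}^n$; composing with the diffeomorphisms of the flow then gives $M\cong\mathbb{S}^n$. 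Either way $M$ is diffeomorphic to $\mathbb{S}^n$, which is the assertion. Moreover, since $\sqrt{2n}>2\sqrt{2}$ for $n\ge 7$, the hypothesis here is weaker than that of Theorem~\ref{ds}(ii), so the corollary genuinely improves the earlier differentiable sphere theorem.

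There is no essential new difficulty in this deduction; all of the analysis is packaged in Theorem~\ref{main2}. The one point that deserves attention is that Theorem~\ref{main2} carries a non-strict pinching inequality, so one must check that the convergence dichotomy — and hence the diffeomorphism conclusion — persists on the borderline $|A|^2=\sqrt{2n}$, where equality can hold in the pinching condition at some or all points; this is already built into the statement of Theorem~\ref{main2}, so it suffices to invoke that theorem directly.
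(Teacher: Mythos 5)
Your deduction is exactly the one the paper intends: you note that $\bigl(\tfrac{|H|^2}{n-1}+2\bigr)^2+(2n-4)\ge 2n$, so $|A|^2\le\sqrt{2n}$ implies the $n\ge 7$ pinching hypothesis of Theorem~\ref{main2}, and then either branch of the convergence dichotomy yields $M\cong\mathbb{S}^n$. This matches the paper's own (one-line) argument; your added remarks on the round-point case and the sharpness comparison with Theorem~\ref{ds} are correct elaborations rather than a different route.
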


$  \ $
\section{Preliminaries}

Let $( M^{n} ,g )$ be the $n$-dimensional $(n\geq3,p=1$ or  $n\geq4)$ Riemannian submanifold isometrically immersed in a simply connected space form
$\mathbb{F}^{n+d} ( \bar K)$ with constant curvature $\bar K$. Denote by
$\bar{\nabla}$ the Levi-Civita connection of the ambient space
$\mathbb{F}^{n+d} $. We use the same symbol $\nabla$ to represent the
connection of the tangent bundle $T M$ and the normal bundle $N M$. Denote by
$( \cdot )^{\top}$ and $( \cdot )^{\bot}$ the projections onto $T M$ and $N
M$, respectively. For $u,v \in \Gamma ( T M )$, $\xi \in \Gamma ( N M )$, the
connection $\nabla$ is given by $\nabla_{u} v= ( \bar{\nabla}_{u} v )^{\top}$
and $\nabla_{u} \xi = ( \bar{\nabla}_{u} \xi )^{\perp}$. The second fundamental
form of $M$ is defined by
\[ A ( u,v ) = ( \bar{\nabla}_{u} v )^{\perp} . \]

Let $\{ e_{i}   \,|\,  1 \le i \le n \}$ be a local orthonormal frame
for the tangent bundle and $\{ \nu_{\alpha}   \,|\,  n+1 \le \alpha \le
n+d \}$ be a local orthonormal frame for the normal bundle. Let $\{ \omega_{i}
\}$ be the dual frame of $\{ e_{i} \}$. With the local frame, the first and
the second fundamental forms can be written as $g= \sum_{i} \omega^{i} \otimes
\omega^{i}$ and $A= \sum_{i,j, \alpha} h_{i j \alpha}   \omega^{i} \otimes
\omega^{j} \otimes \nu_{\alpha}=\sum_{i,j} h_{i j }   \omega^{i} \otimes
\omega^{j}$, respectively. The mean curvature vector is
given by
\[ H= \sum_{\alpha} H_{\alpha} \nu_{\alpha} , \hspace{1em} H_{\alpha} =
   \sum_{i} h_{i i \alpha} . \]
Let
$\mathring{A} =A - \tfrac{H}{n}  g$ be the
traceless second fundamental form, then we have
$|\mathring{A}|^{2} = | A |^{2} - \frac{|H|^{2}}{n} $ and $|\nabla \mathring A |^{2} = | \nabla A |^{2} - \frac{|\nabla H|^{2}}{n} $.
Denote by $\nabla^{2}_{i,j} T= \nabla_{i} ( \nabla_{j} T ) -
\nabla_{\nabla_{i} e_{j}} T$ the second order covariant derivative of tensors.
Then the Laplacian of a tensor is defined by $\Delta T= \sum_{i}
\nabla^{2}_{i,i} T$.

We have the following evolution equations for the mean curvature flow.
\begin{lemma}[\cite{AB,B}]\label{ee}
\begin{eqnarray}
  \nabla_{\partial_t} h_{ij} &=& \Delta h_{ij} +h_{ij}\cdot h_{pq}h_{pq}+h_{iq}\cdot h_{qp}h_{pj}+h_{jq}\cdot h_{qp}h_{pi} \label{ee1} \\
   && -2h_{iq}\cdot h_{jp}h_{pq}+2\bar KHg_{ij}-n\bar Kh_{ij}, \nonumber \\
  \nabla_{\partial_t} H &=& \Delta H +H\cdot h_{pq} h_{pq}+n\bar K H, \\
  \frac{\partial}{\partial t}|A|^2 &=& \Delta|A|^2-2|\nabla A|^2+2R_1+4\bar K|H|^2-2n\bar K |A|^2, \\
  \frac{\partial}{\partial t}|H|^2 &=& \Delta|H|^2-2|\nabla H|^2+2R_2+2n\bar K|H|^2,\\
  \frac{\partial}{\partial t}|\mathring A|^2 &=& \Delta|\mathring A|^2-2|\nabla \mathring A|^2+2R_1-\frac2n R_2-2n\bar K |\mathring A|^2.
\end{eqnarray}

where
\begin{eqnarray}
R_1&=&2\sum\limits_{\alpha,\beta}(\sum\limits_{i,j}h_{ij\alpha}h_{ij\beta})^2+|{Rm}^{\perp}|^2,\\
R_2&=&\sum\limits_{i,j}(\sum\limits_{\alpha}H_\alpha h_{ij\alpha})^2.
\end{eqnarray}
\end{lemma}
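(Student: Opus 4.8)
The plan is to obtain all five evolution equations by differentiating the relevant geometric quantities along the flow $\partial_t F = H$ and then converting the resulting rough Laplacians into the stated reaction form via a Simons-type commutation identity in the space form $\mathbb{F}^{n+d}(\bar K)$, following \cite{AB,B}. I would start from the two basic variation formulas. Differentiating $g_{ij}=\langle\partial_i F,\partial_j F\rangle$ and using the Weingarten relation $\bar\nabla_{\partial_i}\nu_\alpha=-h_{ij\alpha}e_j+\nabla^\perp_i\nu_\alpha$ gives $\partial_t g_{ij}=-2H_\alpha h_{ij\alpha}$, hence $\partial_t g^{ij}=2H_\alpha h^{ij}_\alpha$. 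Differentiating the Gauss formula $\bar\nabla_{\partial_i}\partial_j F=\nabla_i e_j+h_{ij\alpha}\nu_\alpha$ in time, commuting $\partial_t$ with $\partial_i,\partial_j$, and projecting onto the normal bundle produces an identity of the shape $\nabla_{\partial_t}h_{ij}=\nabla_i\nabla_j H+(\text{terms quadratic in }A)+(\text{ambient curvature terms})$. Because $\mathbb{F}^{n+d}(\bar K)$ has constant curvature, its curvature tensor is $\bar R_{ABCD}=\bar K(\bar g_{AC}\bar g_{BD}-\bar g_{AD}\bar g_{BC})$, so the ambient-curvature contributions are explicit multiples of $\bar K$ acting through $g$ and $H$.

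Next I would invoke the Simons identity for submanifolds of a space form, $\Delta h_{ij}=\nabla_i\nabla_j H+(\text{quadratic terms in }A)+\bar K(\cdots)$, proved by commuting the two covariant derivatives in $\nabla_i\nabla_j H$ and applying the Gauss, Codazzi and Ricci equations. Subtracting this from the variation formula for $\nabla_{\partial_t}h_{ij}$ cancels the $\nabla_i\nabla_j H$ term and leaves precisely the quadratic self-interaction $h_{ij}\!\cdot\! h_{pq}h_{pq}+h_{iq}\!\cdot\! h_{qp}h_{pj}+h_{jq}\!\cdot\! h_{qp}h_{pi}-2h_{iq}\!\cdot\! h_{jp}h_{pq}$ together with the linear terms $2\bar KHg_{ij}-n\bar Kh_{ij}$, which is \eqref{ee1}. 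Tracing over $i=j$, and remembering the contribution of $\partial_t g^{ij}=2H_\alpha h^{ij}_\alpha$, collapses these to $\nabla_{\partial_t}H=\Delta H+H\cdot h_{pq}h_{pq}+n\bar KH$.

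For the norms I would use the Bochner-type identities $\Delta|A|^2=2\langle h,\Delta h\rangle+2|\nabla A|^2$ and $\Delta|H|^2=2\langle H,\Delta H\rangle+2|\nabla H|^2$. Writing $\partial_t|A|^2=\partial_t(g^{ik}g^{jl}h_{ij\alpha}h_{kl\alpha})$ and substituting the evolutions of $g$ and $h$, the reaction terms reorganize into $2R_1+4\bar K|H|^2-2n\bar K|A|^2$, where the quartic part and the normal-curvature square combine into $R_1=2\sum_{\alpha,\beta}(\sum_{i,j}h_{ij\alpha}h_{ij\beta})^2+|Rm^\perp|^2$ (here the Ricci equation is used to recognize $|Rm^\perp|^2$). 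The same computation for $|H|^2$ yields $2R_2+2n\bar K|H|^2$ with $R_2=\sum_{i,j}(\sum_\alpha H_\alpha h_{ij\alpha})^2$. Finally, since $|\mathring A|^2=|A|^2-\tfrac1n|H|^2$ and $|\nabla\mathring A|^2=|\nabla A|^2-\tfrac1n|\nabla H|^2$, subtracting $\tfrac1n$ times the $|H|^2$-equation from the $|A|^2$-equation gives $2R_1-\tfrac2nR_2$ plus $4\bar K|H|^2-2\bar K|H|^2-2n\bar K|A|^2=-2n\bar K(|A|^2-\tfrac1n|H|^2)=-2n\bar K|\mathring A|^2$, which is the last equation. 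The main obstacle is the careful derivation of the Simons-type identity in arbitrary codimension and the bookkeeping needed to see that all normal-curvature contributions reassemble exactly into the $|Rm^\perp|^2$ term of $R_1$; everything else is routine tensor computation.
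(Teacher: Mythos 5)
The paper itself gives no proof of this lemma---it is quoted from Andrews--Baker \cite{AB} and Baker \cite{B}---and your outline is essentially the derivation used in those references: first variation of $g_{ij}$ and $h_{ij}$ under $\partial_t F=H$, a Simons-type commutation identity to trade $\nabla_i\nabla_j H$ for $\Delta h_{ij}$, the constant-curvature form of the ambient curvature to make the $\bar K$-terms explicit, and then the norm identities and the subtraction $|\mathring A|^2=|A|^2-\tfrac1n|H|^2$, whose $\bar K$-bookkeeping you carry out correctly.

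Two points of bookkeeping need care. First, the clean form of \eqref{ee1} and of the $H$-equation presupposes that $\nabla_{\partial_t}$ is the metric-compatible time connection of Andrews--Baker (parallel for $g$ and for the normal metric), not the naive componentwise $\partial_t$. With that convention $\nabla_{\partial_t}(g^{ij}h_{ij})=g^{ij}\nabla_{\partial_t}h_{ij}$, the three cubic terms in \eqref{ee1} cancel under the trace, and $2n\bar K H-n\bar K H=n\bar K H$ gives the stated $H$-equation with no further correction; if you additionally insert the contribution of $\partial_t g^{ij}=2H_\alpha h^{ij}_\alpha$, as your plan says, you create a spurious extra term $2(H\cdot h^{pq})h_{pq}$. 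Conversely, if you work with the naive $\partial_t$, then \eqref{ee1} itself acquires Weingarten-type corrections (in codimension one this is Huisken's $-2Hh_{il}h^{l}{}_{j}$ term), and only then is the $\partial_t g^{ij}$ contribution needed in the trace. As written your proposal mixes the two conventions, so the trace step would not close; fix one convention and the computation goes through. Second, when you do the $|A|^2$ computation you will find the quartic reaction equals $2\sum_{\alpha,\beta}\bigl(\sum_{i,j}h_{ij\alpha}h_{ij\beta}\bigr)^2+2|{Rm}^{\perp}|^2$, i.e. $R_1=\sum_{\alpha,\beta}\bigl(\sum_{i,j}h_{ij\alpha}h_{ij\beta}\bigr)^2+|{Rm}^{\perp}|^2$ as in \cite{AB}; the extra factor $2$ on the first term in the paper's displayed $R_1$ is a typo (in codimension one it would give $4|A|^4$ rather than the correct $2|A|^4$), so you should not expect your computation to reproduce that formula literally.
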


We have the following following curvature estimates.
\begin{lemma}[\cite{B}]\label{eq}
\begin{align}
|\nabla A | ^ 2 &\geq \frac  {3 }{ n+2 } | \nabla H | ^ 2,  \label{eqn_gradient1} \\
R_{2} &= | \mathring{A} |^{2} | H |^{2} + \frac{1}{n} | H |^{4} -P_{2} | H|^{2},  P_{2} = \sum_{\alpha >1} \left( \mathring{h}_{i j \alpha} \right)^{2},\\
   R_3&=\sum H^\alpha h^\alpha_{ij} h^\beta_{jk} h^\beta_{ki},\\
R_{1} - \frac{1}{n} R_{2} &\leq | \mathring{A} |^{4} + \frac{1}{n}
  | \mathring{A} |^{2} | H |^{2} +2P_{2} | \mathring{A} |^{2} - \frac{3}{2} P_{2}^{2} -
  \frac{1}{n} P_{2} | H |^{2},\\
R_3-R_1\geq&\frac{|\mathring A|^2|H|^2}{2(n-1)}-\frac n2(|\mathring A|^2-P_2)\\
&-\max\{4,\frac{n+2}{2}\}(|\mathring A|^2-P_2)P_2-\frac32P^2_2.\notag
\end{align}
\end{lemma}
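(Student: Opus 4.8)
The lemma collects a gradient estimate, the identity for $R_2$, two curvature inequalities, and the defining relation for $R_3$ (notation, nothing to prove). All are pointwise in space at a fixed time, so I would fix $x\in M$ and argue algebraically there. For the gradient inequality the frame is immaterial; for the other items, whenever $H(x)\neq0$ I choose the local orthonormal normal frame $\{\nu_\alpha\}$ with $\nu_1:=\nu_{n+1}=H/|H|$, so that $H_\alpha=0$ for $\alpha>1$, $h_{ij\alpha}=\mathring h_{ij\alpha}$ for $\alpha>1$, and $h_{ij1}=\mathring h_{ij1}+\tfrac{|H|}{n}\delta_{ij}$. With this choice $P_2=\sum_{\alpha>1}\sum_{i,j}\mathring h_{ij\alpha}^2$ and, since $|\mathring A|^2=\sum_{\alpha}\sum_{i,j}\mathring h_{ij\alpha}^2$, one has $|\mathring A|^2-P_2=\sum_{i,j}\mathring h_{ij1}^2=:|\mathring h_1|^2$; at a point where $H(x)=0$ all $|H|$-terms drop and what remains is the $H=0$ specialisation of the inequalities below. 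Throughout I write $\langle B,C\rangle:=\sum_{i,j}B_{ij}C_{ij}$ and $[B,C]:=BC-CB$ for symmetric matrices.

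\emph{The gradient inequality.} Since $\nabla g=0$, the trace of $\nabla_{e_i}h$ over its two tangent indices is $\nabla_iH$, i.e. $\sum_j h_{ijj\alpha}=\nabla_iH_\alpha$, where $h_{ijk\alpha}:=(\nabla_{e_i}h)_{jk\alpha}$; and by the Codazzi equation in $\mathbb F^{n+d}(\bar K)$ the tensor $h_{ijk\alpha}$ is totally symmetric in $i,j,k$. I would split $h_{ijk\alpha}=E_{ijk\alpha}+T_{ijk\alpha}$ with $T_{ijk\alpha}=\tfrac1{n+2}\big(\delta_{ij}\nabla_kH_\alpha+\delta_{ik}\nabla_jH_\alpha+\delta_{jk}\nabla_iH_\alpha\big)$, the unique totally symmetric tensor built from $\nabla H$ having the same traces as $h_{ijk\alpha}$. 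Then $E$ is totally symmetric and trace-free, so $\langle E,T\rangle=0$, while a direct expansion gives $|T|^2=\tfrac3{n+2}|\nabla H|^2$; hence $|\nabla A|^2=|E|^2+|T|^2\geq\tfrac3{n+2}|\nabla H|^2$.

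\emph{The identity for $R_2$ and the two curvature estimates.} In the adapted frame $R_2=\sum_{i,j}(H_1h_{ij1})^2=|H|^2\big(|\mathring h_1|^2+\tfrac{|H|^2}{n}\big)$, which is $|\mathring A|^2|H|^2+\tfrac1n|H|^4-P_2|H|^2$ after substituting $|\mathring h_1|^2=|\mathring A|^2-P_2$. For the bound on $R_1-\tfrac1n R_2$ I would substitute the adapted-frame expressions into the formula for $R_1$ of Lemma~\ref{ee} (so $R_1$ is $2\sum_{\alpha,\beta}\langle h_\alpha,h_\beta\rangle^2$ plus $|{Rm}^{\perp}|^2=\sum_{\alpha,\beta}|[h_\alpha,h_\beta]|^2$), isolate the $\nu_1$-direction, combine the $|H|^4$- and $|H|^2$-terms it generates with those coming from $\tfrac1n R_2$, and estimate the remaining trace-free quartic terms by two matrix inequalities: (a) $\langle B,C\rangle^2+|[B,C]|^2\leq2|B|^2|C|^2$ for symmetric $B,C$, used on the cross terms involving $\mathring h_1$ and $\mathring h_\beta$ with $\beta>1$ (this produces the $2P_2|\mathring A|^2$), and (b) the Li--Li inequality $\sum_{\alpha,\beta}\big(\langle B_\alpha,B_\beta\rangle^2+|[B_\alpha,B_\beta]|^2\big)\leq\tfrac32\big(\sum_\alpha|B_\alpha|^2\big)^2$ for trace-free symmetric matrices, applied to the directions $\alpha>1$ (this produces the $-\tfrac32P_2^2$). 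For $R_3-R_1$, the adapted frame gives $R_3=|H|\,\mathrm{tr}(h_1^3)+|H|\sum_{\beta>1}\mathrm{tr}(h_1\mathring h_\beta^2)$; expanding in $\mathring h_1$, I would bound the cubic terms from below using Okumura's inequality $|\mathrm{tr}(S^3)|\leq\tfrac{n-2}{\sqrt{n(n-1)}}|S|^3$ for trace-free symmetric $S$ together with Young's inequality (this is where the coefficient $\tfrac1{2(n-1)}$ enters), estimate $|\mathrm{tr}(\mathring h_1\mathring h_\beta^2)|$ and the commutator contributions coming from $-R_1$ by elementary matrix bounds with a case distinction on $n$ (which produces the factor $\max\{4,\tfrac{n+2}{2}\}$), and discard the nonnegative remainder of $R_1$.

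\emph{Main obstacle.} The computation is routine once the frame is fixed, but it must be carried out without losing constants: the precise coefficients $\tfrac1{2(n-1)}$ and $\max\{4,\tfrac{n+2}{2}\}$ in the lower bound for $R_3-R_1$ (and the $2P_2|\mathring A|^2-\tfrac32P_2^2$ in the bound for $R_1-\tfrac1nR_2$) are exactly what make the pinching hypotheses of Theorem~\ref{main2} sharp, so each Young splitting has to be optimised and the mean-curvature direction $\nu_1$ has to be singled out rather than absorbed into a single application of the Li--Li inequality.
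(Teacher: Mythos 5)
The paper itself gives no proof of Lemma \ref{eq}: it is quoted verbatim from Baker \cite{B}, so what you have written is a reconstruction of Baker's (and Andrews--Baker's) arguments rather than an alternative to anything in this paper. For the first three claims your reconstruction is the standard one and is correct: the decomposition $h_{ijk\alpha}=E_{ijk\alpha}+T_{ijk\alpha}$ with $|T|^2=\tfrac{3}{n+2}|\nabla H|^2$ and $\langle E,T\rangle=0$ is exactly how \eqref{eqn_gradient1} is proved; the adapted frame $\nu_1=H/|H|$ gives the $R_2$ identity at once; and for $R_1-\tfrac1nR_2$ your two matrix inequalities do yield precisely the stated bound, since the $\nu_1$-block contributes $\big(|\mathring h_1|^2+\tfrac{|H|^2}{n}\big)^2$, the mixed block is $\le 4|\mathring h_1|^2P_2$ by your inequality (a), the block with $\alpha,\beta>1$ is $\le\tfrac32P_2^2$ by Li--Li, and subtracting $\tfrac1nR_2$ and substituting $|\mathring h_1|^2=|\mathring A|^2-P_2$ reproduces the right-hand side exactly. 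One caveat: this works with $R_1=\sum_{\alpha,\beta}\big(\langle h_\alpha,h_\beta\rangle^2+|[h_\alpha,h_\beta]|^2\big)$; the factor $2$ in front of $\sum_{\alpha,\beta}(\sum_{i,j}h_{ij\alpha}h_{ij\beta})^2$ in the paper's Lemma \ref{ee}, which you copied, is a misprint (with it the fourth inequality is already false for a hypersurface with $H=0$), so it must not be carried through your computation.

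The genuine soft spot is the last inequality, where your outline has two concrete problems. First, ``discard the nonnegative remainder of $R_1$'' goes the wrong way: $R_1$ enters $R_3-R_1$ with a minus sign, so every nonnegative block of $R_1$ must be bounded \emph{above} (the $\alpha,\beta>1$ block by $\tfrac32P_2^2$ via Li--Li, the mixed block by constants times $|\mathring h_1|^2P_2$); nothing of $R_1$ may be dropped, and as phrased that step fails (only nonnegative pieces of $R_3$, such as $\tfrac{|H|^2}{n}P_2$, may be discarded). Second, the Okumura-plus-Young step you invoke in the $\nu_1$-block produces $\tfrac{|H|^2|\mathring h_1|^2}{2(n-1)}-\tfrac{n}{2}|\mathring h_1|^4$, i.e.\ the quartic term $-\tfrac n2\big(|\mathring A|^2-P_2\big)^2$, not the quadratic $-\tfrac n2\big(|\mathring A|^2-P_2\big)$ printed in the statement; the printed version is not scale-invariant, is false for large $|\mathring A|$ with $|H|$ small, and is not what the application in Lemma \ref{Z2} uses, so the exponent is a typo that a completed proof would have to produce and flag. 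Since you never actually carry out the cubic and mixed-term estimates, neither the coefficient $\tfrac{1}{2(n-1)}$, nor the constant $\max\{4,\tfrac{n+2}{2}\}$, nor this correction is established by your sketch; that part of the lemma remains unproved as written.
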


For convenience, we denote $$a ( x )=\sqrt{\Big(\frac{x}{n-1}+2\bar K\Big)^2+(2n-4)\bar K^2},\ \ \ \mathring{a} ( x )=a( x )-\frac{x}{n}.$$
Moreover, we prove the following inequalities.
\begin{lemma}
  $\label{app}$For $x \geq 0$,
  $\mathring{a}$ has the following properties.
  \begin{enumerateroman}
   \item $\frac{4x (\mathring{a}')^2}{\mathring{a}}  < 1,$

    \item $2x  \mathring{a}''  + \mathring{a}'  < \frac{2
    ( n-1 )}{n ( n+2 )},$

    \item $\frac{n-2}{\sqrt{n(n-1)}}\sqrt{x\mathring{a}}+\mathring{a}  < \frac{x}{n}+n\bar K,$

    \item $n\bar K(\mathring{a}+x\mathring{a}')-a( \mathring{a}-x\mathring{a}')\geq \frac{2n(n-2)(n-1)^2\bar K^4}{\big(x+\sqrt{2+\sqrt{2n}}(n-1)\bar K\big)^2} $,

    \item $2\mathring{a}-\frac{x}{n}+x\mathring{a}'\leq 2\sqrt{2n}\bar K-\frac{(n-4)x}{n(n-1)}-\frac{6(\sqrt{2n}-2)\bar K x}{3x+2\sqrt{2n}(n-1)\bar K}$,

    \item $\frac{x}{n-1}\left( a  +n\bar K
    \right) - (\frac{x}{n-1}+2n\bar K) \left(  \mathring{a} +
    a  -n\bar K - x\mathring{a}' \right) \\
       < -\frac{2x\bar K}{n-1}+2n(n-4)\bar K^2 $.
  \end{enumerateroman}
\end{lemma}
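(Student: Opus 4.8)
The plan is to strip the six assertions down to a finite list of one-variable inequalities and dispatch each by elementary calculus. First, by homogeneity one reduces to the normalized case $\bar K=1$: since $a(\bar K x)=\bar K\,a_1(x)$ with $a_1(x)=\sqrt{(\frac{x}{n-1}+2)^2+2n-4}$, and $\mathring a(\bar K x)=\bar K\,\mathring a_1(x)$, $\mathring a'(\bar K x)=\mathring a_1'(x)$, $\mathring a''(\bar K x)=\bar K^{-1}\mathring a_1''(x)$, one checks that each of (i)--(vi) --- including the rational correction terms in (iv)--(vi) --- is unchanged under $x\mapsto x/\bar K$. So assume $\bar K=1$ and put $u=\frac{x}{n-1}+2\ (\ge 2)$, so that $a=\sqrt{u^2+2n-4}$,
\[
a^2=\frac{x^2}{(n-1)^2}+\frac{4x}{n-1}+2n,\qquad a'=\frac{u}{(n-1)a},\qquad a''=\frac{2n-4}{(n-1)^2a^3},
\]
and hence $\mathring a=a-\frac xn$, $\mathring a'=a'-\frac1n=\frac{nu-(n-1)a}{n(n-1)a}$, $\mathring a''=a''>0$. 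From this I record the facts used throughout: $\mathring a\ge\frac{x}{n(n-1)}+2\ge 2$ on $[0,\infty)$, $\mathring a$ is strictly convex, and $\mathring a'$ is increasing with $\lim_{x\to\infty}\mathring a'=\frac1{n(n-1)}>0$, so $\mathring a$ decreases then increases.

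For (i)--(iii) the method is uniform: isolate the radical(s), check that the expression on the side to be squared is positive on $[0,\infty)$ using the facts above, and square --- once for (i) and (ii); for (iii), first clear $\sqrt{x\mathring a}$, then isolate the remaining single power of $a$ and square again --- occasionally splitting on the sign of the other side. Each such step, after using $a^2=\frac{x^2}{(n-1)^2}+\frac{4x}{n-1}+2n$ to remove the even powers of $a$, produces a polynomial inequality in $x$ (equivalently in $u$) of degree at most six whose coefficients are explicit polynomials in $n$. For instance, (ii) is equivalent to $2xa''+a'<\frac3{n+2}$, that is to $(n+2)\big(4(n-2)(u-2)+u(u^2+2n-4)\big)<3(n-1)(u^2+2n-4)^{3/2}$, and squaring gives a degree-six polynomial inequality in $u\ge2$ whose leading coefficients satisfy $(n+2)^2<9(n-1)^2$; one then checks that the finitely many lower-order coefficients are dominated for the range of $n$ under consideration. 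Items (i) and (iii) go the same way.

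The inequalities (iv)--(vi) are the delicate ones, because the rational correction terms are tuned to make the estimates essentially sharp, so a crude leading-order comparison is useless. The key observations are: (a) after substituting $\mathring a=a-\frac xn$, $\mathring a'=a'-\frac1n$, the apparently dangerous term $-a^2$ in the left side of (iv) cancels against $+axa'=\frac{xu}{n-1}$, leaving
\[
n\bar K(\mathring a+x\mathring a')-a(\mathring a-x\mathring a')=\frac{n}{(n-1)a}\Big(\frac{2x^2}{n-1}+6x+2n(n-1)-2a(x+n-1)\Big),
\]
whose leading term as $x\to\infty$ also cancels; and (b) the two sides of (iv) (likewise of (v)) coincide at $x=0$ --- for (iv) this is exactly the identity $(\sqrt{2n}-2)(\sqrt{2n}+2)=2n-4$, which also pins down the constant $\sqrt{2+\sqrt{2n}}\,(n-1)$ in the denominator. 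Accordingly, for each of (iv)--(vi) I would bring the rational bound over a common (positive) denominator, clear it, collect the remaining single radical $a$ on one side, check positivity of both sides, and square; using $a^2=\frac{x^2}{(n-1)^2}+\frac{4x}{n-1}+2n$ this reduces to a polynomial inequality in $x$, of degree at most eight, which in cases (iv) and (v) vanishes at $x=0$ and hence factors through $x$.

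The \emph{main obstacle} is precisely this last step: because the estimates are sharp one must show that an explicit polynomial --- with coefficients that are polynomials in $n$, and also in $\sqrt{2n}$ in cases (iv)--(v) --- is nonnegative on $[0,\infty)$. I would do this by writing it as a sum of monomials with manifestly nonnegative coefficients after the shift $x\mapsto x+2(n-1)$ that completes the square in $a^2$, together with an obvious square if needed, invoking $n\ge7$ (respectively $n\ge3$ in the hypersurface case) to make the constants work; since everything is explicit, this is mechanical though lengthy. The codimension $p$ plays no role in this lemma.
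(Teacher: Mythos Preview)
Your strategy---normalize $\bar K=1$, isolate the single radical $a$, square, and reduce each item to an explicit polynomial inequality---is sound, and for the two sharp items (iv) and (v) it is essentially what the paper does. The paper derives your identity
\[
n(\mathring a+x\mathring a')-a(\mathring a-x\mathring a')=\tfrac{2n}{n-1}\Bigl(\tfrac{x^{2}+3(n-1)x+n(n-1)^{2}}{(n-1)a}-x-(n-1)\Bigr),
\]
rationalizes against the right-hand bound, and verifies the resulting polynomial inequality; your remarks about the cancellation of the $a^{2}$-term and about equality at $x=0$ (forced by $(\sqrt{2n}-2)(\sqrt{2n}+2)=2n-4$) match the paper's computation exactly.

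Where you diverge is in (i), (ii), (iii) and (vi). The paper avoids any serious polynomial work there by means of short crude bounds: for (ii) a three-line rational chain bounds $2x\mathring a''$ directly and adds $\mathring a'<\tfrac{1}{n(n-1)}$; for (iii) two squarings collapse the statement to the immediate inequality $a<u+\tfrac{n-2}{u}$; and for (vi) no squaring is used at all---the elementary facts $x\mathring a'-\mathring a=-\tfrac{2(u+n-2)}{a}<-2\bar K$ and $a>u$ give the result in three lines. Your uniform squaring route would work but manufactures degree-six to degree-eight polynomial inequalities whose nonnegativity you leave as ``mechanical though lengthy''; that is precisely the content you have not supplied, and the paper's shortcuts show it is unnecessary for four of the six items. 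One further caution: in (v) both sides become negative for large $x$ when $n>4$ (each side behaves like $-\tfrac{(n-4)x}{n(n-1)}$), so ``check positivity of both sides, and square'' does not go through as stated---you would have to rearrange before squaring, which the paper handles by first subtracting the matching linear pieces. (For (i), by contrast, the paper's two-line argument implicitly uses $4(\mathring a')^{2}<\tfrac{1}{n(n-1)}$, which is not obvious where $\mathring a'<0$; your polynomial reduction is arguably the cleaner route there.)
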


\begin{proof}
  By direct computations, we get
  \[ \mathring{a}=\sqrt{\Big(\frac{x}{n-1}+2\bar K\Big)^2+(2n-4)\bar K^2}-\frac xn,  \]

  \[ \mathring{a}'  = \frac{\frac{x}{n-1}+2\bar K}{(n-1)\sqrt{\Big(\frac{x}{n-1}+2\bar K\Big)^2+(2n-4)\bar K^2}}-\frac1n<\frac{1}{n(n-1)} ,  \]

  \[\mathring{\alpha}''  = \frac{2( n-2 ) \bar K^{2}}{(n-1)^2 \left(\sqrt{\Big(\frac{x}{n-1}+2\bar K\Big)^2+(2n-4)\bar K^2}\right)^3} .\]

(i)
\begin{equation*}
  \frac{4x (\mathring{a}')^2}{\mathring{a}}  < \frac{x }{n(n-1)\mathring{a}}<1.
\end{equation*}

 (ii)
 \begin{align*}
   2x  \mathring{a}''  <& \frac{4( n-2 )x \bar K^{2}}{(n-1)^2 \Big(\frac{x}{n-1}+2\bar K\Big)\left(\Big(\frac{x}{n-1}+2\bar K\Big)^2+(2n-4)\bar K^2\right)} \\
     < &\frac{4( n-2 ) \bar K^{2}}{\frac{x^2}{n-1}+ 6\bar Kx+\frac{4n(n-1)^2}{x}\bar K^3+  (2n+8)(n-1)\bar K^2               }\\
     < &\frac{2( n-2 ) }{(n-1) (n+2\sqrt{6n}+4)             }<\frac{2n-5}{(n-1)(n+2)}.
 \end{align*}

\begin{equation*}
  2x  \mathring{a}''  + \mathring{a}'  <\frac{2n-5}{(n-1)(n+2)}+\frac{1}{n(n-1)}= \frac{2
    ( n-1 )}{n ( n+2 )}.
\end{equation*}

(iii)
\begin{align*}
   & \frac{n-2}{\sqrt{n(n-1)}}\sqrt{x\mathring{a}}+\mathring{a}  < \frac{x}{n}+n\bar K \\
  \Leftrightarrow & \frac{(n-2)^2}{n(n-1)}\Big(x(a-\frac{x}{n})\Big)  < (\frac{2x}{n}-a+n\bar K)^2\\
  \Leftrightarrow & a\Big(\frac{x}{n-1}+2\bar K\Big)  < \Big(\frac{x}{n-1}+2\bar K\Big)^2+(n-2)\bar K^2\\
  \Leftrightarrow & a  < \Big(\frac{x}{n-1}+2\bar K\Big)+\frac{(n-2)\bar K^2}{\frac{x}{n-1}+2\bar K}.
\end{align*}

 (iv)
 \begin{align*}
   &n\bar K(\mathring{a}+x\mathring{a}')-a( \mathring{a}-x\mathring{a}') \\
   =& \frac{2n\bar K}{n-1}\bigg( \frac{x^2+3(n-1)\bar K x +n(n-1)^2\bar K^2}{\sqrt{x^2+4(n-1)\bar K x+2n(n-1)^2\bar K^2}}-x-(n-1)\bar K\bigg)    \\
   =&  \frac{2n(n-2)(n-1)\big(2x+n(n-1)\bar K\big)\bar K^4}{\big(x^2+3(n-1)\bar K x+n(n-1)^2\bar K^2\big)a+\big(x+(n-1)\bar K\big)(n-1)a^2}  \\
   \geq&\frac{2n(n-2)(n-1)^2\bar K^4}{\big(x+\sqrt{2+\sqrt{2n}}(n-1)\bar K\big)^2}.
 \end{align*}
The last inequality above is equivalent to
  \begin{align*}
 \Leftrightarrow & x^3+\Big(4\sqrt{2+\sqrt{2n}}+n-5\Big)(n-1)\bar K x^2\\
 &+2\Big(n\sqrt{2+\sqrt{2n}}+\sqrt{2n}-n\Big)(n-1)^2\bar K^2 x+\sqrt{2n}n(n-1)^3\bar K^3\\
  &\geq\big(x+\sqrt{2n}(n-1)\bar K\big)\big(x^2+3(n-1)\bar K x+n(n-1)^2\bar K^2\big)\\
  &\geq(n-1)a\cdot\big(x^2+3(n-1)\bar K x+n(n-1)^2\bar K^2\big).
 \end{align*}

(v)
set $A=(n-1)a=\sqrt{x^2+4(n-1)\bar K x+2n(n-1)^2\bar K^2}.$
\begin{align*}
   &  2\mathring{a}-\frac{x}{n}+x\mathring{a}'\\
    =&\frac{3x^2+10(n-1)\bar K x +4n(n-1)^2\bar K^2}{A}-\frac{4x}{n}\\
     \leq& 2\sqrt{2n}\bar K-\frac{(n-4)x}{n(n-1)}-\frac{6(\sqrt{2n}-2)\bar K x}{3x+2\sqrt{2n}(n-1)\bar K}.
\end{align*}
The inequality above is equivalent to
\begin{align*}
\Leftrightarrow & 3x+2\sqrt{2n}(n-1)\bar K-\frac{3x^2+10(n-1)\bar K x +4n(n-1)^2\bar K^2}{A}\\
&=\frac{\big(3x+2\sqrt{2n}(n-1)\bar K\big)^2A^2-\big(3x^2+10(n-1)\bar K x +4n(n-1)^2\bar K^2\big)^2}{A^2\big(3x+2\sqrt{2n}(n-1)\bar K\big)+A\big(3x^2+10(n-1)\bar K x +4n(n-1)^2\bar K^2\big)}\\
&\geq\frac{12(\sqrt{2n}-2)(n-1)\bar K x\cdot A}{A\big(3x+2\sqrt{2n}(n-1)\bar K\big)+\big(3x^2+10(n-1)\bar K x +4n(n-1)^2\bar K^2\big)}\\
&\geq\frac{6(\sqrt{2n}-2)(n-1)\bar K x}{3x+2\sqrt{2n}(n-1)\bar K}.
 \end{align*}
The last inequality above is equivalent to
\begin{align*}
\Leftrightarrow & A\big(3x+2\sqrt{2n}(n-1)\bar K\big)\\
&\geq3x^2+10(n-1)\bar K x +4n(n-1)^2\bar K^2.
 \end{align*}

 (vi)
 \begin{align*}
   &  \frac{x}{n-1}\left( a   +n\bar K
    \right) - (\frac{x}{n-1}+2n\bar K) \left(  a+\mathring{a}
      -n\bar K - x\mathring{a}' \right) \\
    =& \frac{x}{n-1}\left( 2n\bar K + x\mathring{a}' -\mathring{a}     \right)
    +2n\bar K\left( n\bar K + x\mathring{a}'-a-\mathring{a}     \right)\\
    <&\frac{x}{n-1}\left(   2n\bar K + \frac{x}{n-1}  -\Big(\frac{x}{n-1}+2\bar K\Big)\right)\\
    &+2n\bar K\left( n\bar K + \frac{x}{n-1} -2\Big(\frac{x}{n-1}+2\bar K\Big)    \right)\\
    <& -\frac{2x\bar K}{n-1}+2n(n-4)\bar K^2.
 \end{align*}

\end{proof}

$ \ $
\section{Preservation of curvature pinching}

In this section, we prove the curvature pinching condition preserves along the mean curvature flow in spheres. First we consider the submanifold $M^n (n\geq7)$ of arbitrary codimension in the sphere ${S}^{n+p}(\frac{1}{\sqrt{\bar K}})$. For the mean curvature flow of hypersurfaces $M^n (n\geq3)$ in spheres, the proof is similar.  First, we prove the following inequality.

\begin{lemma}\label{negative}For $x\geq0$, the following inequality holds.
\begin{align*}
a( \mathring{a}-x\mathring{a}')-n\bar K(\mathring{a}+x\mathring{a}')
       +P_{2} \left( 2 \mathring{a} - \frac{x}{n}  + x \mathring{a}' \right)- \frac{3}{2} P^2_{2}<0. \\
\end{align*}
\end{lemma}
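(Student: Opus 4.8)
The plan is to regard the left-hand side as a quadratic polynomial in the single variable $P_2$. Put
\[
c(x)=2\mathring{a}-\frac{x}{n}+x\mathring{a}',\qquad
d(x)=a\,(\mathring{a}-x\mathring{a}')-n\bar K\,(\mathring{a}+x\mathring{a}'),
\]
so that the quantity in question is $\Phi(P_2)=-\tfrac32P_2^{2}+c(x)\,P_2+d(x)$, a downward parabola in $P_2$. Since $P_2=\sum_{\alpha>1}(\mathring{h}_{ij\alpha})^{2}\ge 0$, it suffices to show $\sup_{P_2\ge 0}\Phi(P_2)<0$. That supremum equals $d(x)$ when $c(x)\le 0$, and $\tfrac16 c(x)^{2}+d(x)$ (attained at $P_2=c(x)/3$) when $c(x)>0$. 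Thus the lemma reduces to two assertions: (a) $d(x)<0$ for all $x\ge0$; and (b) $c(x)^{2}<-6\,d(x)$ whenever $c(x)>0$.

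Assertion (a) is exactly Lemma~\ref{app}(iv), which moreover supplies the quantitative bound $-d(x)\ge \dfrac{2n(n-2)(n-1)^{2}\bar K^{4}}{\bigl(x+\sqrt{2+\sqrt{2n}}\,(n-1)\bar K\bigr)^{2}}>0$. For assertion (b), when $c(x)>0$ I would invoke Lemma~\ref{app}(v), which gives $0<c(x)\le R(x)$ with
\[
R(x):=2\sqrt{2n}\,\bar K-\frac{(n-4)x}{n(n-1)}-\frac{6(\sqrt{2n}-2)\bar K\,x}{3x+2\sqrt{2n}\,(n-1)\bar K},
\]
whence $c(x)^{2}\le R(x)^{2}$. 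Combining with (iv), assertion (b) follows once one verifies the one-variable inequality
\[
R(x)^{2}\,\bigl(x+\sqrt{2+\sqrt{2n}}\,(n-1)\bar K\bigr)^{2}\ \le\ 12\,n(n-2)(n-1)^{2}\bar K^{4}
\]
on the set $\{x\ge0:\ R(x)\ge0\}$, which is bounded since $R(0)=2\sqrt{2n}\,\bar K>0$ while $R(x)\to-\infty$; outside this set $c(x)\le R(x)<0$ and we are back in case (a).

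The last displayed inequality is the main obstacle. Multiplying through by $\bigl(3x+2\sqrt{2n}\,(n-1)\bar K\bigr)^{2}$ to clear the denominator inside $R$, it turns into a polynomial inequality of degree $6$ in $x$ whose coefficients are explicit in $n$ and $\bar K$ (homogeneous in $\bar K$, so one may normalize $\bar K=1$), and it is needed only on the bounded range on which the cleared numerator of $R$ is nonnegative. I would establish it by expanding and then grouping or estimating the coefficients, using $n\ge7$; as a consistency check, at $x=0$ both Lemma~\ref{app}(iv) and Lemma~\ref{app}(v) hold with equality, and the inequality collapses to $\tfrac23\bigl(2+\sqrt{2n}\bigr)\le n-2$, i.e.\ $9n^{2}-68n+100\ge0$, which is true for all $n\ge7$. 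Everything after the quadratic-in-$P_2$ reduction is essentially bookkeeping, so this degree-$6$ polynomial estimate is the only genuinely laborious step.
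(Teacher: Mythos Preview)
Your approach is essentially identical to the paper's: both treat the expression as a downward quadratic in $P_{2}$, invoke Lemma~\ref{app}(iv) for the constant term and Lemma~\ref{app}(v) for the linear coefficient, and arrive at the same one-variable inequality (your displayed inequality is precisely the square of the paper's inequality~(3.3), with $\bar K=1$). For the final verification the paper does not handle the degree-six polynomial directly but instead further estimates $R(x)$ to reduce to a quadratic in $x$ for $n\ge 9$ and resorts to explicit numerical computation for $n=7,8$, so your ``essentially bookkeeping'' characterization undersells the work needed for small $n$.
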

\begin{proof}
Without loss of generality, we assume that $\bar K=1$. Its discriminant satisfies
\begin{align}\label{n7}
  \Delta=&\left( 2 \mathring{a} - \frac{x}{n}  + x \mathring{a}' \right)-\sqrt{6\big(n(\mathring{a}+x\mathring{a}')-a( \mathring{a}-x\mathring{a}')\big)}. \\
   =&\frac{3x^2+10(n-1) x +4n(n-1)^2}{(n-1)\sqrt{x^2+4(n-1) x+2n(n-1)^2}}-\frac{4x}{n}\notag\\
   &-\sqrt{\frac{12n}{n-1}\bigg( \frac{x^2+3(n-1) x +n(n-1)^2}{\sqrt{x^2+4(n-1) x+2n(n-1)^2}}-x-(n-1)\bigg) }.\notag
\end{align}
 From Lemma \ref{app} (iv),(v), we need to prove the following inequality holds.
 \begin{align}\label{n8}
  &\Big(x+\sqrt{2+\sqrt{2n}}(n-1)\Big)\Big(2\sqrt{2n}-\frac{(n-4)x}{n(n-1)}-\frac{6(\sqrt{2n}-2) x}{3x+2\sqrt{2n}(n-1)}\Big)\\
  <& 2\sqrt{3n(n-2)}(n-1).\notag
\end{align}

\begin{align}
  &\Big(x+\sqrt{2+\sqrt{2n}}(n-1)\Big)\Big(2\sqrt{2n}-\frac{(n-4)x}{n(n-1)}-\frac{6(\sqrt{2n}-2) x}{3x+2\sqrt{2n}(n-1)}\Big)\\
  <& \Big(x+\sqrt{2+\sqrt{2n}}(n-1)\Big)\Big(2\sqrt{2n}-\frac{(n-4)x}{n(n-1)}\Big)\notag\\
  & -2(\sqrt{2n}-2)x\cdot\frac{3\sqrt{2+\sqrt{2n}}}{2\sqrt{2n}}\notag\\
  <& 2\sqrt{3n(n-2)}(n-1).\notag
  \end{align}
The last inequality above is equivalent to
\begin{align}
 \Leftrightarrow &-\frac{n-4}{n(n-1)}x^2+\left(2\sqrt{2n}-\Big(4-\frac{4}{n}-3\sqrt{\frac2n}\Big)\sqrt{2+\sqrt{2n}}\right)x  \\
  &-2\sqrt{n}(n-1)\left(\sqrt{3(n-2)}- \sqrt{4+2\sqrt{2n}} \right)<0.\notag
\end{align}

Now we prove its discriminant is negative, i.e.,
\begin{align}
  &1-\Big(2-\frac{2}{n}-\frac{3}{\sqrt{2n}}\Big)\sqrt{\frac{2+\sqrt{2n}}{2n}} <\sqrt{\frac{n-4}{n\sqrt{n}}\Big(\sqrt{3(n-2)}- \sqrt{4+2\sqrt{2n}} \Big)}.
\end{align}

(i)$n \geq 66$
\begin{align*}
  & 1+\frac{4}{n-4}+\sqrt{\frac{4+2\sqrt{2n}}{n}}  <\sqrt{3\Big(1-\frac2n\Big)} \\
  \Leftrightarrow 1 & < \frac{n-4}{n\sqrt{n}}\Big(\sqrt{3(n-2)}- \sqrt{4+2\sqrt{2n}} \Big).
\end{align*}

(ii) $13 \leq n \leq 65$

Since
\begin{align*}
  &1-\Big(2-\frac{2}{n}-\frac{3}{\sqrt{2n}}\Big)\sqrt{\frac{2+\sqrt{2n}}{2n}} \\  &<1-\Big(2-\frac{2}{13}-\frac{3}{\sqrt{26}}\Big)\sqrt{\frac{2+\sqrt{130}}{130}}\\
   &< \sqrt{0.3555},
\end{align*}
then we have
\begin{align*}
  & 0.3555\Big(1+\frac{4}{n-4}\Big)+\sqrt{\frac{4+2\sqrt{2n}}{n}}  <\sqrt{3\Big(1-\frac2n\Big)} \\
  &\Leftrightarrow 0.3555  < \frac{n-4}{n\sqrt{n}}\Big(\sqrt{3(n-2)}- \sqrt{4+2\sqrt{2n}} \Big). \end{align*}

(iii) $9 \leq n \leq 12$

We can check that
\begin{equation*}
  1-\Big(2-\frac{2}{n}-\frac{3}{\sqrt{2n}}\Big)\sqrt{\frac{2+\sqrt{2n}}{2n}} < \sqrt{0.1814},
\end{equation*}
then we have
\begin{align*}
    0.1814  < \frac{n-4}{n\sqrt{n}}\Big(\sqrt{3(n-2)}- \sqrt{4+2\sqrt{2n}} \Big).  \end{align*}

When $n=8$, (\ref{n8}) is
\begin{align*}
   & 8-\frac{x}{14}-\frac{12x}{3x+56}-\frac{168}{x+7\sqrt{6}}<0 \\
   & \Leftrightarrow \frac{3}{14}x^3-\Big(8-\frac{3\sqrt{6}}{2}\Big)x^2-56(\sqrt{6}-1)x+56^2(3-\sqrt{6})>0.
\end{align*}
We calculate the minimum is
\begin{align*}
    &\min\{\frac{3}{14}x^3-\Big(8-\frac{3\sqrt{6}}{2}\Big)x^2-56(\sqrt{6}-1)x+56^2(3-\sqrt{6})\}\\
   &=86.697>0,\ x=19.827.
\end{align*}

When $n=7$, (\ref{n7}) is
\begin{equation*}
  \frac{x^2+20 x +336}{2\sqrt{x^2+24 x+504}}-\frac{4x}{7}-\sqrt{14\Big( \frac{x^2+18 x +252}{\sqrt{x^2+24 x+504}}-x-6\Big)}<0.
\end{equation*}
We calculate the maximum by numerical evaluation.
\begin{align*}
   &\max\Big\{\frac{x^2+20 x +336}{2\sqrt{x^2+24 x+504}}-\frac{4x}{7}-\sqrt{14\Big( \frac{x^2+18 x +252}{\sqrt{x^2+24 x+504}}-x-6\Big)}\Big\}\notag\\
   &=-0.264<0, \  x = 20.399.
\end{align*}

\end{proof}

We denote $\mathring{a}' ( | H |^{2})$ and $\mathring{a}'' ( | H
|^{2} )$ by $\mathring{a}'$ and
$\mathring{a}''$, respectively. Then the evolution
equation of $\mathring{a}$ satisfies
\begin{equation}\label{a}
  \frac{\partial}{\partial t} \mathring{a} = \Delta \mathring{a} +2 \mathring{a}' \cdot
  ( - | \nabla H |^{2} +R_{2} +n \bar K | H |^{2} ) - \mathring{a}'' \cdot |
  \nabla | H |^{2} |^{2} .
\end{equation}

Since $M$ is
compact, there exists a small positive number $0<\epsilon\ll\frac{1}{n^2},$
such that $M$ satisfies
\begin{equation}\label{con-w}
  |\mathring{A}|^{2} < \mathring{a} - \epsilon\omega, \ \    \ \
   \omega = \frac{| H |^{2}}{n-1} +2n\bar K>a.
\end{equation}
Then we prove that the pinching condition above is
preserved along the flow.

\begin{proposition}\label{baochi}
Let $F_0 :M \rightarrow \mathbb{S}^{n+p}(\frac{1}{\sqrt{\bar K}})$ be a compact submanifold immersed in the sphere. Suppose there exists a small positive number $\epsilon(\ll\frac{1}{n^2})$ such that $ |A| ^ 2 \leq a(|H|^2)-\epsilon\omega, $
 then this condition holds
  along the mean curvature flow for all time $t \in [ 0,T )$ where $T\leq\infty$.
\end{proposition}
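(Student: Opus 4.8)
The plan is to run a tensor maximum principle on the quantity $|\mathring A|^2 - \mathring a(|H|^2) + \epsilon\omega$, showing it stays negative along the flow. First I would combine the evolution equations from Lemma \ref{ee}: the equation for $\frac{\partial}{\partial t}|\mathring A|^2$ involves $2R_1 - \tfrac{2}{n}R_2 - 2n\bar K|\mathring A|^2 - 2|\nabla\mathring A|^2$, and equation \eqref{a} gives the evolution of $\mathring a$, while the evolution of $\omega$ follows from the equations for $|H|^2$. Subtracting, I obtain a reaction–diffusion inequality for $f := |\mathring A|^2 - \mathring a + \epsilon\omega$ of the form $\frac{\partial}{\partial t} f \le \Delta f + \langle\text{lower-order terms}\rangle + (\text{reaction})$, where the gradient terms must be controlled: the bad term $-\mathring a'' |\nabla|H|^2|^2$ coming from \eqref{a} has a sign that helps (since $\mathring a''>0$, this is $\le 0$ after the sign flip), and the remaining gradient terms $-2|\nabla\mathring A|^2 + \tfrac{2}{n}|\nabla H|^2$ together with the $\mathring a'$-multiple of $-2|\nabla H|^2$ must be shown nonpositive; here I would invoke the Kato-type inequality \eqref{eqn_gradient1} from Lemma \ref{eq}, namely $|\nabla A|^2 \ge \tfrac{3}{n+2}|\nabla H|^2$, rewritten for the traceless part, combined with property (ii) of Lemma \ref{app}, $2x\mathring a'' + \mathring a' < \tfrac{2(n-1)}{n(n+2)}$, to absorb all gradient contributions.

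Next, at a putative first time and point where $f = 0$ (so $|\mathring A|^2 = \mathring a - \epsilon\omega$), the maximum principle forces $\frac{\partial}{\partial t} f \ge 0$ and $\Delta f \le 0$, so it suffices to show the reaction terms are strictly negative there. Using the curvature estimates of Lemma \ref{eq} — in particular the bound on $R_1 - \tfrac1n R_2$ in terms of $|\mathring A|^4$, $|\mathring A|^2|H|^2$, $P_2|\mathring A|^2$, $P_2^2$, and $P_2|H|^2$, together with the identity $R_2 = |\mathring A|^2|H|^2 + \tfrac1n|H|^4 - P_2|H|^2$ — I would substitute $|\mathring A|^2 = \mathring a - \epsilon\omega$ and expand. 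The reaction polynomial becomes, up to $O(\epsilon)$ corrections, a quadratic in $P_2$ whose leading term is $-\tfrac32 P_2^2$ and whose value at $P_2 = 0$ is controlled by $\mathring a$, $x\mathring a'$, $a$ and $n\bar K$; its negativity is exactly the content of Lemma \ref{negative}, whose discriminant estimate \eqref{n8} (itself resting on parts (iv), (v) of Lemma \ref{app}) was arranged precisely for this. The $\epsilon$-terms are linear in $\epsilon$ with bounded coefficients, so for $\epsilon$ small enough (the $\epsilon \ll n^{-2}$ hypothesis) the strict negativity of the $\epsilon = 0$ polynomial persists.

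The hardest part will be the bookkeeping in this reaction-term estimate: one must carefully package the $P_2$-dependence so that the quadratic-in-$P_2$ structure with negative leading coefficient $-\tfrac32$ emerges cleanly, check that the coefficient of the linear-in-$P_2$ term is precisely $2\mathring a - \tfrac{x}{n} + x\mathring a'$ (matching \eqref{n7}, \eqref{n8}), and verify that the $\epsilon\omega$ shift does not destroy the strict inequality — this is why the proposition states $\le a(|H|^2) - \epsilon\omega$ rather than $< a(|H|^2)$, giving the quantitative room. A secondary technical point is that $\mathring a$ is only $C^2$ but not smooth at $x = 0$ if any derivative blows up; however parts (i)–(iii) of Lemma \ref{app} give uniform bounds on $\mathring a'$ and $x\mathring a''$, so the parabolic maximum principle applies without difficulty. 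Once negativity of the reaction term at a first zero is established, Hamilton's maximum principle for the scalar $f$ (or equivalently the ODE comparison) yields that $f < 0$ is preserved on $[0,T)$, which is the assertion of Proposition \ref{baochi}.
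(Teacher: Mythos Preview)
Your outline is correct and matches the paper's proof: one sets $U=|\mathring A|^{2}-\mathring a+\epsilon\omega$, computes $(\partial_{t}-\Delta)U$, uses \eqref{eqn_gradient1} together with Lemma~\ref{app}\,(ii) to make the gradient block nonpositive, substitutes $|\mathring A|^{2}=U+\mathring a-\epsilon\omega$ in the reaction terms, and then invokes Lemma~\ref{negative} (for the $P_{2}$-quadratic) and Lemma~\ref{app}\,(vi) (for the $\epsilon$-correction) before applying the scalar maximum principle. One small slip: after subtracting \eqref{a} the second-derivative contribution enters $(\partial_t-\Delta)U$ as $+\,\mathring a''\,|\nabla|H|^{2}|^{2}\ge 0$, so its sign is \emph{unfavorable}, not helpful; it is handled via $|\nabla|H|^{2}|^{2}\le 4|H|^{2}|\nabla H|^{2}$ and then absorbed by Lemma~\ref{app}\,(ii), exactly as you go on to indicate.
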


\begin{proof}
  Let $U= |\mathring{A}|^{2} - \mathring{a}+\epsilon\omega$.
    By Lemmas \ref{ee},\ref{eq} and (\ref{a}), we have
  \begin{eqnarray*}
    \Big( \frac{\partial}{\partial t} - \Delta \Big) U & = & -2 | \nabla \mathring A |^{2} +2 (\mathring{a}'-\frac{\epsilon}{n-1})   | \nabla H |^{2} +
    \mathring{a}''   | \nabla | H |^{2} |^{2}\\
    &  & +2R_{1} - \frac{2}{n} R_{2} -2 n \bar K|\mathring{A}|^{2} -2
    (\mathring{a}'-\frac{\epsilon}{n-1}) \cdot ( R_{2} +n \bar K| H |^{2} ) \\
    & \leq &2 \Big( - \frac{2 ( n-1 )}{n (
    n+2 )} + \mathring{a}'-\frac{\epsilon}{n-1} +2 | H |^{2}
    \mathring{a}'' \Big) | \nabla H |^{2}\\
    &  & +2 |\mathring{A}|^{2} \Big( |\mathring{A}|^{2} + \frac{1}{n} | H |^{2} - n \bar K
    \Big) +2P_{2} \Big( 2 |\mathring{A}|^{2} - \frac{1}{n} | H |^{2} - \frac{3}{2}
    P_{2} \Big)\\
    &  & -2 (\mathring{a}'-\frac{\epsilon}{n-1}) \cdot | H |^{2} \Big(
    |\mathring{A}|^{2} + \frac{1}{n} | H |^{2} +n\bar K-P_{2} \Big)  .
  \end{eqnarray*}
  From Lemma \ref{app} (ii), the coefficient of $| \nabla H |^{2}$ is
  negative.

  Replacing $|\mathring{A}|^{2}$ by $U+ \mathring{a}-\epsilon\omega$,
  the above formula becomes
  \begin{eqnarray*}
    \left( \frac{\partial}{\partial t} - \Delta \right) U & \leq &  2U \left( 2 \mathring{a} + \frac{1}{n} | H |^{2} -n \bar K  -
    \mathring{a}' | H |^{2}  +2P_{2} + \epsilon \big( \frac{| H |^{2}}{n-1} -2 \omega
    \big) \right)  \nonumber\\
    &  & +2U^{2}+2 \bigg( a( \mathring{a}-| H |^{2}\mathring{a}')-n\bar K(\mathring{a}+| H |^{2}\mathring{a}') \bigg)
    \nonumber\\
    &  & +2P_{2} \left( 2 \mathring{a} - \frac{1}{n} | H |^{2} + | H
    |^{2} \mathring{a}' - \epsilon \big( \frac{| H |^{2}}{n-1} +2 \omega \big)- \frac{3}{2} P_{2} \right)
    \label{Uandneg}\\
    &  & +2 \epsilon    \left( \frac{ |
    H |^{2}}{n-1} \cdot \left( a +n \bar K
    \right) - \omega\left( a+ \mathring{a} -n \bar K - \mathring{a}' | H |^{2} \right) \right) \nonumber\\
    &  & +2 \epsilon^{2}   \omega \big(\omega- \frac{| H |^{2}}{n-1} \big) .
  \end{eqnarray*}
From Lemma \ref{app} (vi) and Lemma \ref{negative}, we have
\begin{eqnarray*}
 &&  a( \mathring{a}-| H |^{2}\mathring{a}')-n\bar K(\mathring{a}+| H |^{2}\mathring{a}')
    \nonumber\\
    &  &+ P_{2} \left( 2 \mathring{a} - \frac{1}{n} | H |^{2} + | H
    |^{2} \mathring{a}' - \epsilon \big( \frac{| H |^{2}}{n-1} +2 \omega \big)- \frac{3}{2} P_{2} \right)
    \label{Uandneg}\\
    &  &+  \epsilon    \left( \frac{ |
    H |^{2}}{n-1} \cdot \left( a +n \bar K
    \right) - \omega\left( a+ \mathring{a} -n \bar K - \mathring{a}' | H |^{2} \right) + 2n\bar K\epsilon  \omega \right)\\
    &<& a( \mathring{a}-| H |^{2}\mathring{a}')-n\bar K(\mathring{a}+| H |^{2}\mathring{a}')
    \nonumber\\
    &  &+ P_{2} \left( 2 \mathring{a} - \frac{1}{n} | H |^{2} + | H
    |^{2} \mathring{a}' - \frac{3}{2} P_{2} \right)
    \label{Uandneg}\\
    &  &+  2\epsilon  \bar K  \left(n\bar K(n-4+2n\epsilon) -\frac{| H |^{2}}{n-1}(1-n\epsilon) \right)\\
    &<&0.
\end{eqnarray*}
Then the assertion follows from the maximum
  principle.
\end{proof}

$ \ $
\section{Convergence theorem in the sphere}
In this section, we prove the convergence theorem for the mean curvature flow of submanifolds $M^n(n\geq7)$ in $\mathbb{S}^{n+p}(\frac{1}{\sqrt{\bar K}})$. First, we derive an estimate for the traceless second fundamental
form, which guarantees that $M$ becomes
spherical along the mean curvature flow.

\begin{proposition}\label{san1}
There exist constants $C < \infty$ and $\sigma>0$ both depending only on the initial surface such that for all time $t \in [0,T)$ where $T\leq\infty$, we have the estimate

\begin{align}\label{pinch}
|\mathring A|^ 2  \leq  C (|H| ^2+\bar{K})^{1 -\sigma}e^{-2\sigma t}.
\end{align}
\end{proposition}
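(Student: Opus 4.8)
The plan is to run a Huisken‑type maximum principle argument on a scale‑invariant quantity built from $|\mathring A|^{2}$. Since the flow exists on $[0,T)$ and, by Proposition~\ref{baochi}, the pinching $|\mathring A|^{2}\le \mathring a(|H|^{2})-\epsilon\omega$ is preserved, I would fix a dimensional constant $\beta>0$ and set $g=|H|^{2}+\beta\bar K$, so that $g$ is smooth, bounded below by $\beta\bar K>0$, and comparable to $|H|^{2}+\bar K$ in the sense that $\min(1,\beta)(|H|^{2}+\bar K)\le g\le\max(1,\beta)(|H|^{2}+\bar K)$. For a small parameter $\sigma\in(0,1)$ to be chosen, put
\[ f_{\sigma}\;=\;\frac{|\mathring A|^{2}}{g^{\,1-\sigma}}. \]
Because $M$ is compact, $f_{\sigma}(\cdot,0)$ is finite, so it suffices to establish an evolution inequality of the form $\big(\tfrac{\partial}{\partial t}-\Delta\big)f_{\sigma}\le\tfrac{2(1-\sigma)}{g}\langle\nabla g,\nabla f_{\sigma}\rangle-2\sigma\theta\,f_{\sigma}$ for some $\theta>0$; the scalar maximum principle then yields $\sup_{M}f_{\sigma}(\cdot,t)\le e^{-2\sigma\theta t}\sup_{M}f_{\sigma}(\cdot,0)$, and feeding this back through the comparability of $g$ with $|H|^{2}+\bar K$ (and renaming $\sigma$) gives exactly (\ref{pinch}).

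First I would differentiate. Using Lemma~\ref{ee} and the product/quotient rules one obtains
\[ \Big(\tfrac{\partial}{\partial t}-\Delta\Big)f_{\sigma}=\tfrac{2(1-\sigma)}{g}\langle\nabla g,\nabla f_{\sigma}\rangle-\tfrac{\sigma(1-\sigma)|\mathring A|^{2}}{g^{\,3-\sigma}}|\nabla g|^{2}-\tfrac{2}{g^{\,1-\sigma}}|\nabla\mathring A|^{2}+\tfrac{2(1-\sigma)|\mathring A|^{2}}{g^{\,2-\sigma}}|\nabla H|^{2}+\mathcal R, \]
where $\mathcal R=g^{-(1-\sigma)}\big(2R_{1}-\tfrac2n R_{2}-2n\bar K|\mathring A|^{2}\big)-2(1-\sigma)\tfrac{|\mathring A|^{2}}{g^{\,2-\sigma}}\big(R_{2}+n\bar K|H|^{2}\big)$ is the zero‑order part. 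The first gradient term is kept (it vanishes at a spatial maximum of $f_{\sigma}$) and the $|\nabla g|^{2}$ term is $\le 0$. For the remaining two gradient terms I would use the estimate $|\nabla\mathring A|^{2}=|\nabla A|^{2}-\tfrac1n|\nabla H|^{2}\ge\big(\tfrac{3}{n+2}-\tfrac1n\big)|\nabla H|^{2}=\tfrac{2(n-1)}{n(n+2)}|\nabla H|^{2}$ coming from (\ref{eqn_gradient1}), together with the a priori bound $\tfrac{|\mathring A|^{2}}{g}\le\tfrac{\mathring a(|H|^{2})}{|H|^{2}+\beta\bar K}$; by Lemma~\ref{app}(iii) and the asymptotics $\mathring a(x)=\tfrac{x}{n(n-1)}+O(\bar K)$ the latter stays below $\tfrac{2(n-1)}{n(n+2)}$ once $\beta\bar K$ exceeds the values of $\mathring a$ near $|H|^{2}=0$ by a controlled amount. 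Hence $-\tfrac{2}{g^{\,1-\sigma}}|\nabla\mathring A|^{2}+\tfrac{2(1-\sigma)|\mathring A|^{2}}{g^{\,2-\sigma}}|\nabla H|^{2}\le 0$ for $\sigma$ small, and all gradient terms except the first are discarded.

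The heart of the matter is the reaction estimate $\mathcal R\le-2\sigma\theta\,f_{\sigma}$. Substituting $R_{2}=|\mathring A|^{2}|H|^{2}+\tfrac1n|H|^{4}-P_{2}|H|^{2}$, the inequality for $R_{1}-\tfrac1n R_{2}$ from Lemma~\ref{eq}, $g=|H|^{2}+\beta\bar K$, and dividing by $2f_{\sigma}>0$ (harmless, since only $\mathcal R/f_{\sigma}$ matters at a maximum), one bounds $\mathcal R/(2f_{\sigma})$ from above by an explicit rational expression in $x=|H|^{2}$, $u=|\mathring A|^{2}$, $P=P_{2}$ and $\sigma$ on the pinching region $0\le P\le u\le\mathring a(x)-\epsilon\omega$. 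At $\sigma=0$ this upper bound reads
\[ 3P-\tfrac{3P^{2}}{2u}-\tfrac{Px}{nu}-2n\bar K+\tfrac{\beta\bar K}{x+\beta\bar K}\Big(u+\tfrac xn+n\bar K-P\Big), \]
and the claim is that it is strictly negative, with a uniform gap, on the whole pinching region (for $n\ge7$ and a suitable $\beta$). For large $x$ the pinching forces $u\le\tfrac{x}{n(n-1)}+O(\bar K)$, so $\tfrac{Px}{nu}\ge(n-1-o(1))P$, whence $3P-\tfrac{3P^{2}}{2u}-\tfrac{Px}{nu}\le 0$, while $\tfrac{\beta\bar K}{x+\beta\bar K}(u+\tfrac xn+n\bar K-P)\to\tfrac{\beta\bar K}{n-1}$, which is strictly below $2n\bar K$ provided $\beta<2n(n-1)$; for bounded $x$ the expression is a controlled quantity and one maximises in $P\in[0,u]$ using $u\le\mathring a(x)$, the extreme case at $x=0$ having size $\tfrac53\sqrt{2n}\,\bar K-n\bar K<0$ for $n\ge7$. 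Thus $\beta$ can be taken in a nonempty range making both the gradient and the reaction estimates close, and by continuity in $\sigma$ the strict negativity persists as a quantitative bound $-2\sigma\theta$ for all sufficiently small $\sigma$. Collecting terms gives $\big(\tfrac{\partial}{\partial t}-\Delta\big)f_{\sigma}\le\tfrac{2(1-\sigma)}{g}\langle\nabla g,\nabla f_{\sigma}\rangle-2\sigma\theta f_{\sigma}$, and the maximum principle (the locus $|\mathring A|^{2}=0$ being trivial, since there $f_{\sigma}=0$) finishes the proof as described above.

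I expect the main obstacle to be exactly this uniform strict negativity of $\mathcal R$ on the pinching region: the choice of $\beta$ is squeezed from both ends — it must be large for the gradient terms and for the low‑$|H|^{2}$ regime, yet small enough to control the $|H|^{2}\to\infty$ regime — and one must carry through the two‑variable optimisation in $(u,P)$ against the sharp pinching $u\le\mathring a(x)-\epsilon\omega$. The explicit inequalities (iii)–(vi) of Lemma~\ref{app} and Lemma~\ref{negative}, which already encode precisely these balances for the preservation step, are the tools that make the book‑keeping close.
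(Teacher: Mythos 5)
The heart of your argument—obtaining a uniform pointwise reaction bound $\mathcal R\le -2\sigma\theta f_\sigma$ for small $\sigma>0$ and then invoking the scalar maximum principle—does not go through, and this is precisely the difficulty the paper's proof is built to circumvent. Your $\sigma=0$ estimate does produce a strictly negative quantity bounded above by a fixed multiple of $-\bar K$, but when you turn on $\sigma>0$ the reaction picks up the extra positive contribution $\sigma\frac{|H|^2}{g}\big(|A|^2-P_2+n\bar K\big)$, which for large $|H|^2$ behaves like $\sigma\frac{|H|^2}{n-1}$ and is therefore unbounded on the pinching region. ``Continuity in $\sigma$'' gives a uniform negativity gap only on compact subsets of the $(|H|^2,|\mathring A|^2,P_2)$ domain, but $|H|^2$ is not a priori bounded along the flow. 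Thus for every $\sigma>0$ and every choice of $\beta$ the reaction term eventually becomes positive when $|H|^2$ is large, and the differential inequality $(\partial_t-\Delta)f_\sigma\le\frac{2(1-\sigma)}{g}\langle\nabla g,\nabla f_\sigma\rangle-2\sigma\theta f_\sigma$ fails; no tuning of $\beta$ can help, since the conflict is between $\sigma$ and the unbounded $|H|^2$, not between the large- and small-$|H|^2$ ends of $\beta$. (Your substitution $g=|H|^2+\beta\bar K$ in place of $\mathring a$ is otherwise harmless, the two being comparable.)

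The paper explicitly records this obstruction just after Lemma \ref{yanhua1}: the evolution inequality contains the positive term $6\sigma|A|^2 f_\sigma$, which ``prevents the ordinary maximum principle.'' It then follows the Huisken--Baker route: from the Simons identity and the pinching estimate (Lemma \ref{Z2}) it derives the Poincar\'e-type inequality of Lemma \ref{Poincare1}, in which the negative gradient terms generated by integration by parts absorb $\int|A|^2 f_\sigma^p$; Lemma \ref{lem1} then yields exponential decay of the $L^p$-norm of $f_\sigma$ for $p$ large, and a Stampacchia/De~Giorgi iteration (using the Michael--Simon Sobolev inequality, as in \cite{B,H3}) upgrades this to the $L^\infty$ bound (\ref{pinch}). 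In short, the proof is integral, not pointwise, and your proposal omits the Poincar\'e/Stampacchia machinery that is essential here.
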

To obtain Theorem \ref{pinch}, we need to find the upper bound of $$f_{\sigma} := \frac{|\mathring A|^2}{\mathring{a}^{1-\sigma}}$$  with the help of  the Stampacchia iteration as in \cite{B}. Here, we get the evolution equation of $f_{\sigma}$ with two parameters.
\begin{lemma}\label{yanhua1}
For every $\sigma\in (0,1)$, we have the evolution equation
\begin{eqnarray}\label{inequ}
\frac \partial { \partial t } f _{ \sigma} &\leq& \Delta f _\sigma + \frac{ 2| \nabla  f _ \sigma | |\nabla  H| }{|\mathring A| }- \frac { 2 \epsilon f_{\sigma}| \nabla H | ^ 2 } { n|\mathring A|^2 }  \\
&&+ 6 \sigma | A| ^ 2 f _ \sigma-2(n\sigma+\epsilon)\bar K f_\sigma.\nonumber
\end{eqnarray}
\end{lemma}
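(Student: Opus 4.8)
The plan is to obtain (\ref{inequ}) by a direct computation with the heat operator $\partial_t-\Delta$ applied to $f_\sigma=|\mathring{A}|^2\,\mathring{a}^{\sigma-1}$ (with $\mathring{a}=\mathring{a}(|H|^2)$), and then to estimate the gradient and the zeroth--order terms separately. First I would apply the product rule, substitute the evolution equation for $|\mathring{A}|^2$ from Lemma~\ref{ee} and the evolution equation (\ref{a}) for $\mathring{a}$, rewrite the cross term $2(1-\sigma)\mathring{a}^{\sigma-2}\langle\nabla|\mathring{A}|^2,\nabla\mathring{a}\rangle$ in terms of $\langle\nabla\mathring{a},\nabla f_\sigma\rangle$, and discard the resulting nonpositive term $-\sigma(1-\sigma)\mathring{a}^{-2}f_\sigma(\mathring{a}')^2\,|\nabla|H|^2|^2$. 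This yields
\[
(\partial_t-\Delta)f_\sigma\le -2\mathring{a}^{\sigma-1}|\nabla\mathring{A}|^2+\frac{2(1-\sigma)\mathring{a}'}{\mathring{a}}\langle\nabla|H|^2,\nabla f_\sigma\rangle+\frac{(1-\sigma)f_\sigma}{\mathring{a}}\bigl(2\mathring{a}'|\nabla H|^2+\mathring{a}''\,|\nabla|H|^2|^2\bigr)+\mathring{a}^{\sigma-1}\Bigl(2R_1-\tfrac{2}{n}R_2-2n\bar K|\mathring{A}|^2\Bigr)-\frac{2(1-\sigma)f_\sigma\mathring{a}'}{\mathring{a}}\bigl(R_2+n\bar K|H|^2\bigr).
\]

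For the gradient terms I would use $|\nabla|H|^2|\le 2|H|\,|\nabla H|$ throughout. The cross term is then at most $\tfrac{4(1-\sigma)|H|\,|\mathring{a}'|}{\mathring{a}}\,|\nabla H|\,|\nabla f_\sigma|$, and combining the pinching $|\mathring{A}|^2<\mathring{a}$ with Lemma~\ref{app}(i) (equivalently $2|H|\,|\mathring{a}'|<\sqrt{\mathring{a}}$) shows $\tfrac{2(1-\sigma)|H|\,|\mathring{a}'|\,|\mathring{A}|}{\mathring{a}}<1$, so the cross term is $<\tfrac{2|\nabla f_\sigma|\,|\nabla H|}{|\mathring{A}|}$. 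Bounding $\mathring{a}''|\nabla|H|^2|^2\le 4|H|^2\mathring{a}''|\nabla H|^2$ and applying Lemma~\ref{app}(ii), i.e.\ $\mathring{a}'+2|H|^2\mathring{a}''<\tfrac{2(n-1)}{n(n+2)}$, together with the curvature estimate $|\nabla\mathring{A}|^2\ge\tfrac{2(n-1)}{n(n+2)}|\nabla H|^2$ coming from (\ref{eqn_gradient1}) and the pinching $|\mathring{A}|^2\le\mathring{a}-\epsilon\omega$ with $\omega>\mathring{a}$, the remaining $|\nabla H|^2$--terms combine into a quantity $\le-\tfrac{2\epsilon f_\sigma}{n|\mathring{A}|^2}|\nabla H|^2$; the numerical inequality needed here, $\tfrac{2(n-1)}{n+2}\bigl(\sigma+(1-\sigma)\epsilon\,\omega/\mathring{a}\bigr)\ge\epsilon$, holds for every $\sigma\in(0,1)$ because $\tfrac{2(n-1)}{n+2}\ge\tfrac{4}{3}$ and $\epsilon\ll 1/n^2$. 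This produces exactly the two gradient terms of (\ref{inequ}).

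For the zeroth--order terms I would substitute the estimates $R_1-\tfrac{1}{n}R_2\le|\mathring{A}|^4+\tfrac{1}{n}|\mathring{A}|^2|H|^2+2P_2|\mathring{A}|^2-\tfrac{3}{2}P_2^2-\tfrac{1}{n}P_2|H|^2$ and $R_2=|\mathring{A}|^2|H|^2+\tfrac{1}{n}|H|^4-P_2|H|^2$ from Lemma~\ref{eq}, and, writing $\mu:=|H|^2\mathring{a}'/\mathring{a}$ (so $\mu<1$, since $|H|^2\mathring{a}'\le\mathring{a}$) and using $|\mathring{A}|^2+\tfrac{1}{n}|H|^2=|A|^2$, rewrite the zeroth--order part as $2|\mathring{A}|^2\bigl((1-(1-\sigma)\mu)|A|^2-(1+(1-\sigma)\mu)n\bar K\bigr)+P_2\bigl((4+2(1-\sigma)\mu)|\mathring{A}|^2-\tfrac{2}{n}|H|^2-3P_2\bigr)$. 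I would then peel off $6\sigma|A|^2f_\sigma-2(n\sigma+\epsilon)\bar K f_\sigma$, insert the pinching as $|\mathring{A}|^2=\mathring{a}-\epsilon\omega-V$ and $|A|^2=a-\epsilon\omega-V$ with $V\ge0$, and collect terms: using $\mu\mathring{a}=|H|^2\mathring{a}'$ and $\mathring{a}+\tfrac{1}{n}|H|^2=a$, the part independent of $\sigma,\epsilon,V$ reassembles into $2\mathring{a}^{\sigma-1}$ times the quadratic $a(\mathring{a}-|H|^2\mathring{a}')-n\bar K(\mathring{a}+|H|^2\mathring{a}')+P_2(2\mathring{a}-\tfrac{|H|^2}{n}+|H|^2\mathring{a}')-\tfrac{3}{2}P_2^2$ of Lemma~\ref{negative}, which is negative; the $\epsilon$-- and $V$--corrections are disposed of exactly as in the proof of Proposition~\ref{baochi} (via Lemma~\ref{app}(vi) and $\omega>a$); and the residual $\sigma$--corrections are absorbed into $6\sigma|A|^2f_\sigma$ with the help of $\mu<1$ and Lemma~\ref{app}(iii).

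The main obstacle is this last step. The $V$--dependent pieces do not admit termwise estimates — large contributions cancel among themselves, since $V=\mathring{a}-\epsilon\omega-|\mathring{A}|^2$ — so the reorganisation must be carried out exactly until Lemma~\ref{negative} is invoked; moreover the final absorption of the $\sigma$--corrections into $6\sigma|A|^2f_\sigma$ is tight, which is precisely where the hypothesis $n\ge7$ enters.
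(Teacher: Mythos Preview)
Your treatment of the gradient terms is essentially the paper's: the cross term is controlled via Lemma~\ref{app}(i), and the combination of $|\nabla\mathring{A}|^2\ge\tfrac{2(n-1)}{n(n+2)}|\nabla H|^2$ with Lemma~\ref{app}(ii) and the pinching $|\mathring{A}|^2\le\mathring{a}-\epsilon\omega$ produces the negative $|\nabla H|^2$--term. (The bound $\tfrac{2(n-1)}{n+2}\ge\tfrac{4}{3}$ you quote is specific to $n\ge 7$ and unnecessary; $\ge 1$ suffices.)

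Where you diverge from the paper is in the zeroth--order estimate, and here your plan is more complicated than it needs to be and contains a genuine obstacle. The paper does \emph{not} substitute $|\mathring{A}|^2=\mathring{a}-\epsilon\omega-V$ and track $V$--terms. Instead it first divides through by $|\mathring{A}|^2$ and then performs a convex split
\[
\frac{R_1-\tfrac1nR_2}{|\mathring{A}|^2}-n\bar K-(1-\sigma)\,\frac{\mathring{a}'}{\mathring{a}}(R_2+n\bar K|H|^2)
=\sigma\cdot\underbrace{\Bigl(\tfrac{R_1-\tfrac1nR_2}{|\mathring{A}|^2}-n\bar K\Bigr)}_{\le\,3|A|^2-n\bar K}
+(1-\sigma)\cdot\underbrace{\Bigl(\tfrac{R_1-\tfrac1nR_2}{|\mathring{A}|^2}-n\bar K-\tfrac{\mathring{a}'}{\mathring{a}}(R_2+n\bar K|H|^2)\Bigr)}_{\le\,-2\epsilon\bar K}.
\]
The $\sigma$--part uses only $P_2\le|\mathring{A}|^2$ and nothing about $n\ge 7$; the $6\sigma|A|^2$ is \emph{produced} by this crude bound, not something that has to absorb residual corrections later. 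The $(1-\sigma)$--part is handled by replacing $\tfrac{1}{|\mathring{A}|^2}\ge\tfrac{1}{\mathring{a}}$ and $|A|^2\le a-\epsilon\omega$, after which Lemma~\ref{negative} applies directly and the $\epsilon\omega$--piece gives $-\tfrac{\epsilon\omega}{\mathring{a}}(\mathring{a}-|H|^2\mathring{a}')\le-2\epsilon\bar K$. No $V$--bookkeeping, no ``reorganisation carried out exactly'', and no appeal to Lemma~\ref{app}(iii) (which is used only later for the Ricci lower bound, not here).

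Your route---peeling off $6\sigma|A|^2f_\sigma-2(n\sigma+\epsilon)\bar K f_\sigma$ first and then showing the remainder is nonpositive---runs into the difficulty you yourself flag: the residual $\sigma$--correction you must control is $\sigma|\mathring{A}|^2\bigl[(\mu-3)|A|^2+n(\mu+1)\bar K-\mu P_2\bigr]$, which is \emph{positive} when $|A|^2$ is small (take $|H|^2\to 0$, $\mu\to 0$), and Lemma~\ref{negative} gives only strict, not quantitative, negativity to absorb it. The assertion that ``the hypothesis $n\ge 7$ enters precisely in this absorption'' is also incorrect: $n\ge 7$ enters only through Lemma~\ref{negative} itself, in the $(1-\sigma)$--part. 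Adopting the paper's $\sigma/(1-\sigma)$ split removes the obstacle entirely.
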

\begin{proof}
From Lemma \ref{ee}, we have
\begin{align}\label{shijian}
\frac{\partial }{\partial t}f_{\sigma} &= f_{\sigma}\Big(\frac{\frac{\partial }{\partial t}|\mathring A|^2 }{ |\mathring A|^2 }-(1-\sigma)\frac{\frac{\partial }{\partial t}\mathring a }{ \mathring a }\Big).    \\
       &\notag
\end{align}
By direct computation, we have
\begin{align}\label{daoshu}
\Delta f_{\sigma} &=f_{\sigma}\Big( \frac{ \Delta  |\mathring A|^2  }{ |\mathring A|^2 }-(1-\sigma)\frac{\Delta\mathring a }{ \mathring a }\Big) \\
        &\quad -\frac{ 2(1-\sigma) }{\mathring a } \big\langle \nabla\mathring a, \nabla f_{\sigma} \big\rangle  +\frac{ \sigma(1-\sigma) }{ \mathring a^2 } f_{\sigma}|{ \nabla\mathring a }|^2. \notag
\end{align}
Combing (\ref{shijian}) and (\ref{daoshu}) and from Lemma \ref{ee}-\ref{app}, we have
\begin{align*}\label{fangcheng}
\Big(\frac{\partial}{\partial t}-\Delta\Big) f_{\sigma} &= \frac{ 2(1-\sigma) }{\mathring a } \big\langle \nabla\mathring a, \nabla f_{\sigma} \big\rangle  -\frac{ \sigma(1-\sigma) }{ \mathring a^2 } f_{\sigma}|{ \nabla\mathring a }|^2 \\
        &\quad + f_{\sigma}\Big(\frac{1-\sigma}{\mathring a}( 2\mathring{a}'|\nabla H|^2+\mathring{a}''|\nabla|H|^2|^2)-\frac{2|\nabla\mathring A|^2}{|\mathring A|^2}\Big)\\
        &\quad+ 2 f_{\sigma} \Bigg( \frac{R_1-\frac1n R_2-n\bar K |\mathring A|^2  }{|\mathring A|^2} - \frac{(1-\sigma)\mathring{a}'}{\mathring a}  (R_2+n \bar K |H|^2)\Bigg)\notag\\
        &\leq \frac{ 4\mathring{a}'|H| }{\mathring a } |\nabla H|| \nabla f_{\sigma}|+ 2f_{\sigma}\frac{2(n-1)}{n(n+2)}\cdot\frac{|\mathring A|^2-\mathring a}{|\mathring A|^2\mathring a}|\nabla H|^2 \\
        &\quad+ 2\sigma f_{\sigma} \Bigg( \frac{R_1-\frac1n R_2  }{|\mathring A|^2} -n\bar K\Bigg) \\
        &\quad+ 2 (1-\sigma)f_{\sigma} \Bigg( \frac{R_1-\frac1n R_2  }{|\mathring A|^2} -n\bar K- \frac{\mathring{a}'}{\mathring a}  (R_2+n \bar K |H|^2)\Bigg)\\
        &\leq \frac{ 2| \nabla f_{\sigma}||\nabla H| }{|\mathring A| } - \frac{2\epsilon f_{\sigma}|\nabla H|^2}{n|\mathring A|^2} + 2\sigma f_{\sigma} \Big( 3|A|^2 -n\bar K\Big) \\
        &\quad+ 2 (1-\sigma)f_{\sigma} \Bigg( \frac{R_1-\frac1n R_2  }{|\mathring A|^2} -n\bar K- \frac{\mathring{a}'}{\mathring a}  (R_2+n \bar K |H|^2)\Bigg).\notag
\end{align*}
The last term in the bracket satisfies
\begin{align}
&\frac{R_1-\frac1n R_2  }{|\mathring A|^2} -n\bar K- \frac{\mathring{a}'}{\mathring a}  (R_2+n \bar K |H|^2)\\ \notag
   \leq &\quad|A|^2+2P_2-n\bar K-\frac{1}{|\mathring A|^2}\Big(\frac32 P^2_2+\frac{|H|^2}{n}P_2\Big)- \frac{\mathring{a}'|H|^2}{\mathring a}  (|A|^2-P_2+n \bar K )\\ \notag
   \leq&\quad \frac{|A|^2}{\mathring a}(\mathring a-\mathring{a}'|H|^2)-\frac{n\bar K}{\mathring a}(\mathring a+\mathring{a}'|H|^2)+\frac{P_2}{\mathring a}\Big(2\mathring a+\mathring{a}'|H|^2-\frac{|H|^2}{n}-\frac32 P_2\Big)\\ \notag
   \leq&\quad \frac{1}{\mathring a}\Bigg(a(\mathring a-\mathring{a}'|H|^2)-n\bar K(\mathring a+\mathring{a}'|H|^2)+P_2\Big(2\mathring a+\mathring{a}'|H|^2-\frac{|H|^2}{n}-\frac32 P_2\Big)\Bigg)\\ \notag
   &-\frac{\epsilon \omega}{\mathring a}(\mathring a-\mathring{a}'|H|^2).\notag
\end{align}
The term in the big bracket of the last inequality is non-positive under our pinching assumption.
We complete the lemma with the following inequality.
\begin{equation*}
  -\frac{\epsilon \omega}{\mathring a}(\mathring a-\mathring{a}'|H|^2)\leq-\frac{2\epsilon \omega \bar K}{\mathring a}\leq-2\epsilon \bar K.
\end{equation*}
\end{proof}

Since the term $\sigma |A|^2 f_\sigma$ in the evolution equation is positive, we cannot use the ordinary maximum principle. As in \cite{B,H3}, we need the negative gradient terms to proceed the iteration. Applying the Simons identity \cite{Si}, we get
\begin{align}
\frac 12 \Delta | \mathring A | ^ 2 & =  \langle \mathring A , \nabla^2 H\rangle + | \nabla \mathring A |^2+ n\bar K|\mathring A|^2-R_1+R_3. \label{Simons}
\end{align}

Now we have the following estimate.
\begin{lemma}\label{Z2}Let $F :M \rightarrow \mathbb{S}^{n+p}(\frac{1}{\sqrt{\bar K}})$ be a compact submanifold immersed in the sphere with
constant curvature $\bar K$. If $F$ satisfies pinching condition (\ref{con-w}), then
\begin{equation}
  n\bar K|\mathring A|^2-R_1+R_3\geq
           \frac{n}{2}|\mathring A|^2(\epsilon|A|^2-\sqrt{2n}\bar{K}).
\end{equation}
\end{lemma}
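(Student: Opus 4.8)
The plan is to reduce the claim to an elementary one-variable inequality by feeding the algebraic estimate for $R_3 - R_1$ from Lemma \ref{eq} into the pinching hypothesis (\ref{con-w}). By scaling I may assume $\bar K = 1$. Since $n \ge 7$ we have $\max\{4,\tfrac{n+2}{2}\} = \tfrac{n+2}{2}$, so Lemma \ref{eq} gives
\[
  n|\mathring A|^2 - R_1 + R_3 \ \ge\ n|\mathring A|^2 + \frac{|\mathring A|^2|H|^2}{2(n-1)} - \frac n2\bigl(|\mathring A|^2 - P_2\bigr) - \frac{n+2}{2}\bigl(|\mathring A|^2 - P_2\bigr)P_2 - \frac 32 P_2^2 .
\]
Write $t=|\mathring A|^2$, $h=|H|^2$. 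For fixed $t,h$ the right-hand side is a quadratic $q(P_2) = \tfrac{n-1}{2}P_2^2 - \bigl(\tfrac{n+2}{2}t - \tfrac n2\bigr)P_2 + \tfrac n2 t + \tfrac{th}{2(n-1)}$ in $P_2$ with positive leading coefficient, so I would minimise it over the admissible interval $P_2 \in [0,t]$; its vertex is at $P_2^\ast = \tfrac{(n+2)t-n}{2(n-1)}$, which lies in $[0,t]$ exactly when $t \ge \tfrac n{n+2}$ (the bound $P_2^\ast \le t$ being automatic for $n \ge 4$). Recalling $|A|^2 = t + h/n$, the target becomes $q_{\min} \ge \tfrac n2 t\bigl(\epsilon(t+h/n) - \sqrt{2n}\bigr)$.

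When $t \le \tfrac n{n+2}$ the minimum of $q$ on $[0,t]$ is $q(0) = \tfrac n2 t + \tfrac{th}{2(n-1)}$, so (for $t>0$) after dividing by $t$ the desired inequality reads $\tfrac n2 + \tfrac h{2(n-1)} + \tfrac{n\sqrt{2n}}{2} \ge \tfrac n2\epsilon(t+h/n)$; since $t \le \tfrac n{n+2}$ and $\epsilon \ll n^{-2}$ the right side is at most $\tfrac{n^2\epsilon}{2(n+2)} + \tfrac\epsilon2 h$, which is trivially smaller. When $t > \tfrac n{n+2}$ the minimum is the vertex value $q(P_2^\ast) = \tfrac n2 t + \tfrac{th}{2(n-1)} - \tfrac{((n+2)t-n)^2}{8(n-1)}$; dividing by $t>0$ and using $\tfrac{n^2}{t}\le n(n+2)$, which holds precisely on this range, to estimate $\tfrac{((n+2)t-n)^2}{t} \le (n+2)\bigl((n+2)t-n\bigr)$, the claim reduces to
\[
  \frac n2 + \frac{n\sqrt{2n}}{2} + \frac{n(n+2)}{8(n-1)} + \Bigl(\frac1{2(n-1)} - \frac\epsilon2\Bigr)h - \Bigl(\frac{(n+2)^2}{8(n-1)} + \frac{n\epsilon}{2}\Bigr)t \ \ge\ 0 .
\]
Now I would invoke the pinching hypothesis: $t = |\mathring A|^2 < \mathring a(h) \le \tfrac{h}{n(n-1)} + 2 + \sqrt{2n-4}$, the last step using $a(h) = \sqrt{(\tfrac h{n-1}+2)^2 + 2n-4} \le \tfrac h{n-1}+2+\sqrt{2n-4}$. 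Substituting this affine-in-$h$ bound for $t$ (legitimate since the coefficient of $t$ is negative), the coefficient of $h$ in the result equals $\tfrac1{2(n-1)}\bigl(1 - \tfrac{(n+2)^2}{4n(n-1)} - n\epsilon\bigr)$, which is positive because $\tfrac{(n+2)^2}{4n(n-1)} < 1$ for $n \ge 4$ and $\epsilon \ll n^{-2}$. Hence the expression is minimised at $h = 0$, where — after discarding the negligible $\epsilon$-term — it amounts to $\tfrac n2 + \tfrac{n\sqrt{2n}}{2} + \tfrac{n(n+2)}{8(n-1)} > \tfrac{(n+2)^2}{8(n-1)}\bigl(2+\sqrt{2n-4}\bigr)$, valid for all $n \ge 7$: both sides are of order $n^{3/2}$ with the left roughly four times the right for large $n$, and the few small dimensions are checked directly.

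The main obstacle is the bookkeeping of the $P_2$-minimisation through its two regimes together with the final verification that the surviving inequality in $|H|^2$ is strictly positive on all of $n \ge 7$. The genuinely delicate point is that the $O(\epsilon)$ correction terms involve the a priori unbounded $|H|^2$; the argument only closes because $\epsilon \ll n^{-2}$ keeps the coefficient of $|H|^2$ positive, pushing the worst case to $|H|^2 = 0$ where those terms do no harm. As in Lemma \ref{negative}, the small-dimensional endpoint may need a short explicit or numerical check in place of the asymptotic comparison.
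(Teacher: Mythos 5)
The paper's own proof is far shorter than yours, and the discrepancy traces to a typographical error in the statement of Lemma~\ref{eq}, which you took at face value. The displayed estimate for $R_3-R_1$ contains the term $-\tfrac{n}{2}\bigl(|\mathring{A}|^2-P_2\bigr)$, which is degree two in the second fundamental form, whereas $R_3-R_1$ and every other term on the right-hand side are degree four; this is not scaling-homogeneous, so the term as printed cannot be what was intended. Reverse-engineering the first line of the paper's proof of Lemma~\ref{Z2} shows the correct term is $-\tfrac{n}{2}\bigl(|\mathring{A}|^2-P_2\bigr)^2$. With that correction one has, for $n\geq 7$ (so $\max\{4,\tfrac{n+2}{2}\}=\tfrac{n+2}{2}$) and $t=|\mathring{A}|^2$, $P=P_2$,
\begin{align*}
-\tfrac{n}{2}(t-P)^2-\tfrac{n+2}{2}(t-P)P-\tfrac32 P^2
\;=\; -\tfrac{n}{2}t^2 + \tfrac{n-2}{2}tP - \tfrac12 P^2,
\end{align*}
and the $P$-dependent remainder $\tfrac{n-2}{2}tP-\tfrac12 P^2=\tfrac{P}{2}\bigl((n-2)t-P\bigr)$ is nonnegative on $0\le P\le t$ since $n\ge 3$. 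Hence $R_3-R_1\ge \tfrac{t\,|H|^2}{2(n-1)}-\tfrac{n}{2}t^2$ with no case-splitting at all, and the lemma follows by factoring out $\tfrac{n}{2}t$, using $|A|^2\le a(|H|^2)-\epsilon\omega$ and the elementary bound $a(x)\le\tfrac{x}{n-1}+\sqrt{2n}\,\bar K$ (valid for $n\ge 2$), plus $\omega>|A|^2$ to replace $\epsilon\omega$ by $\epsilon|A|^2$.

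Your computation, by contrast, builds the quadratic $q(P_2)$ with \emph{positive} leading coefficient $\tfrac{n-1}{2}$ from the misprinted (degree-2) term, which forces the interior-vertex minimisation, the split at $t=\tfrac{n}{n+2}$, the substitution of the affine bound for $\mathring{a}$, and a dimension-by-dimension tail check — none of which is needed. The more serious problem is logical: when $|\mathring{A}|^2-P_2>\bar K$ the misprinted inequality is strictly weaker than the corrected one in the wrong direction (the corrected right-hand side can be larger), so it is not a consequence of the true estimate and may simply be false. An argument whose first line is an inequality that is not actually established therefore does not constitute a proof, even though the downstream arithmetic appears internally consistent. Once you replace $(|\mathring{A}|^2-P_2)$ by $(|\mathring{A}|^2-P_2)^2$ in Lemma~\ref{eq}, your $q(P_2)$ becomes concave in $P_2$, the minimum over $[0,t]$ sits at an endpoint, and the whole argument collapses to the paper's three-line version.
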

\begin{proof}
From Lemma \ref{eq}, we have
\begin{align*}
   n\bar K|\mathring A|^2-R_1+R_3&\geq\frac{n}{2}|\mathring A|^2\Big(\frac{|H|^2}{n(n-1)}+2\bar K\Big)-\frac n2|\mathring A|^4\\
   &=\frac{n}{2}|\mathring A|^2\Big(\frac{|H|^2}{n-1}+2\bar K-|A|^2\Big)\\
   &\geq\frac{n}{2}|\mathring A|^2\Big(\frac{|H|^2}{n-1}+2\bar K-a+\epsilon\omega\Big)\\
   &\geq\frac{n}{2}|\mathring A|^2\Big(\epsilon\omega-(\sqrt{2n}-2)\bar K\Big).
\end{align*}

\end{proof}

Then we can get the required Poincar\'{e} inequality.
\begin{lemma}\label{Poincare1}
For every $p\geq 4$ and $\eta>0$ we have the estimate
\begin{eqnarray}
&&\int_{M_t}  |A|^2f_{\sigma} d \mu _t  \\
&\leq &\int_{M_{t}} \left( \frac{2(p-1)}{n\epsilon\eta}f_{\sigma}^{p-2}|\nabla f_{\sigma} |^2
  + \frac{2(p-1)\eta+5}{n\epsilon}\cdot\frac{f_{\sigma}^{p}|
  \nabla H |^{2}}{|\mathring{A}|^{2}}  \right) d \mu_{t}\nonumber\\
  &&+ \frac{\sqrt{2n}}{\epsilon}\int_{M_t}  \bar Kf ^ p _ \sigma d \mu _t .\nonumber
\end{eqnarray}
\end{lemma}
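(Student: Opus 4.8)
The plan is to adapt the standard divergence-structure argument from \cite{B,H3}: multiply the Simons-type identity (\ref{Simons}) by an appropriate test function built from $f_\sigma$, integrate by parts over the closed manifold $M_t$, and use Lemma \ref{Z2} to extract the term $\int |A|^2 f_\sigma\,d\mu_t$ on the left. Concretely, I would start from (\ref{Simons}) rewritten as
\begin{equation*}
  n\bar K|\mathring A|^2-R_1+R_3 = \tfrac12\Delta|\mathring A|^2 - \langle \mathring A,\nabla^2 H\rangle - |\nabla\mathring A|^2,
\end{equation*}
multiply both sides by $f_\sigma^{p-1}/|\mathring A|^2$ (recalling $f_\sigma=|\mathring A|^2/\mathring a^{1-\sigma}$, so $f_\sigma^{p-1}/|\mathring A|^2 = f_\sigma^{p-2}/\mathring a^{1-\sigma}$ is a legitimate nonnegative weight), and integrate over $M_t$. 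On the left, Lemma \ref{Z2} gives the lower bound $\tfrac{n}{2}f_\sigma^{p-1}(\epsilon|A|^2-\sqrt{2n}\bar K)$, whose integral produces the desired $\tfrac{n\epsilon}{2}\int|A|^2f_\sigma^{p-1}\,d\mu_t$ minus the harmless $\tfrac{n\sqrt{2n}}{2}\bar K\int f_\sigma^{p-1}\,d\mu_t$ term. (Strictly one works with $f_\sigma^{p-1}$ throughout and the statement as written should read with that power; I will follow the bookkeeping of \cite{B} Lemma here.)

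The heart of the matter is estimating the right-hand side after integration by parts. The Laplacian term $\int \tfrac12 (\Delta|\mathring A|^2) f_\sigma^{p-2}\mathring a^{\sigma-1}\,d\mu_t$ is integrated by parts to move one derivative onto the weight; using $\nabla|\mathring A|^2 = \mathring a^{1-\sigma}\nabla f_\sigma + (1-\sigma)f_\sigma \mathring a^{-\sigma}\nabla\mathring a$ and $\nabla\mathring a = \mathring a' \nabla|H|^2$, every resulting term is either a multiple of $f_\sigma^{p-2}|\nabla f_\sigma|^2$, a cross term $f_\sigma^{p-2}\langle\nabla f_\sigma,\nabla|H|^2\rangle$ controlled by Cauchy--Schwarz and $\eta$-Young, or a term $f_\sigma^{p}|\nabla|H|^2|^2/|\mathring A|^2$ times a bounded coefficient (bounded because $4x(\mathring a')^2/\mathring a<1$ by Lemma \ref{app}(i), and $|\nabla|H|^2|^2 \le 4|H|^2|\nabla H|^2$). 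The term $-\int \langle\mathring A,\nabla^2 H\rangle f_\sigma^{p-1}|\mathring A|^{-2}\,d\mu_t$ is integrated by parts once more to throw the extra derivative off $\nabla H$; this creates a factor $\nabla(f_\sigma^{p-1}\mathring A/|\mathring A|^2)$ paired with $\nabla H$, and the Kato-type inequality $|\nabla|\mathring A||\le|\nabla\mathring A|$ together with the negative term $-\int|\nabla\mathring A|^2(\cdots)$ absorbs the worst piece, leaving only $f_\sigma^{p-2}|\nabla f_\sigma|^2$ and $f_\sigma^{p}|\nabla H|^2/|\mathring A|^2$ contributions. Collecting coefficients and choosing the Young parameter to match gives exactly the stated combination $\tfrac{2(p-1)}{n\epsilon\eta}$, $\tfrac{2(p-1)\eta+5}{n\epsilon}$, and $\tfrac{\sqrt{2n}}{\epsilon}$ after dividing through by $\tfrac{n\epsilon}{2}$.

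The main obstacle is the careful accounting of the $|\nabla H|^2$-type terms: one must verify that after every integration by parts the coefficient multiplying $f_\sigma^p|\nabla H|^2/|\mathring A|^2$ stays bounded uniformly in $x=|H|^2$, and this is precisely where Lemma \ref{app}(i) (and the crude bound $\mathring a'<\tfrac1{n(n-1)}$, $|\mathring a''|$ bounded) is indispensable — without the sharp inequality $4x(\mathring a')^2/\mathring a<1$ the relevant coefficient would blow up for large $|H|$. A secondary technical point is that the two distinct gradient quantities $|\nabla f_\sigma|$ and $|\nabla H|$ must be kept separate and only ever combined through Young's inequality with the free parameter $\eta$, so that the final bound is genuinely a Poincar\'e-type inequality with both negative gradient terms available for the subsequent Stampacchia iteration. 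Once these coefficient bounds are in hand the rest is the routine manipulation already present in \cite{B,H3}, and the lemma follows.
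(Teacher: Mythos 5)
Your proposal follows the same path as the paper: start from the Simons identity (\ref{Simons}), apply Lemma~\ref{Z2} to extract $\tfrac{n\epsilon}{2}|A|^2$ on the good side, multiply by a weight built from $f_\sigma$, integrate by parts using the Codazzi relation $\nabla_i\mathring A^\alpha_{ij}=\tfrac{n-1}{n}\nabla_j H^\alpha$, control the coefficients by Lemma~\ref{app}(i) and $|\nabla|H|^2|\le 2|H||\nabla H|$, and split the cross term with Young's inequality at parameter $\eta$. This is exactly what the paper does, only the paper reaches (\ref{f11}) by first rewriting $\Delta f_\sigma$ in terms of $\Delta|\mathring A|^2$ and $\Delta\mathring a$ and then multiplying by $f_\sigma^{p-1}$, whereas you multiply the Simons identity directly by a weight and integrate; these routes are algebraically equivalent.

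Two corrections are needed. First, your weight is off by one power: multiplying by $f_\sigma^{p-1}/|\mathring A|^2=f_\sigma^{p-2}/\mathring a^{1-\sigma}$ leaves $f_\sigma^{p-1}(\epsilon|A|^2-\sqrt{2n}\bar K)$ on the left and produces gradient terms of the form $f_\sigma^{p-3}|\nabla f_\sigma|^2$ and $f_\sigma^{p-1}|\nabla H|^2/|\mathring A|^2$, which do not match the stated estimate. The correct weight is $f_\sigma^{p}/|\mathring A|^2=f_\sigma^{p-1}/\mathring a^{1-\sigma}$ (which is what (\ref{p1}) effectively uses), and accordingly the misprinted left-hand side of the lemma should read $\int_{M_t}|A|^2 f_\sigma^{p}\,d\mu_t$, not $f_\sigma$ and not $f_\sigma^{p-1}$. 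Second, the Kato inequality and the claimed absorption by $-\int|\nabla\mathring A|^2(\cdots)$ play no role and would not work as described: the Simons identity delivers $+|\nabla\mathring A|^2$ on the favorable side of the lower bound for $\Delta|\mathring A|^2$, which is simply discarded because it is nonnegative; and since the weight $f_\sigma^{p-1}/\mathring a^{1-\sigma}$ depends only on $f_\sigma$ and $\mathring a$ (a smooth function of $|H|^2$), no derivative of $|\mathring A|$ ever appears in the integration by parts, so there is nothing for a Kato-type bound to do. With these two points corrected, your outline reduces to the paper's own argument.
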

\begin{proof}
From Lemma \ref{Z2}, we have
\begin{align}\label{f1}
        \Delta f_{\sigma} \geq & \frac{ \Delta  |\mathring A|^2  }{ \mathring a^{1-\sigma} }-(1-\sigma)\frac{f_\sigma\Delta\mathring a }{ \mathring a }-\frac{ 2(1-\sigma)\mathring a' }{\mathring a } \big\langle \nabla |H|^2, \nabla f_{\sigma} \big\rangle  \\ \notag
        \geq& \frac{1}{\mathring a^{1-\sigma}}\Big(2\langle \mathring A,  \nabla^2 H \rangle +n|\mathring A|^2(\epsilon|A|^2-\sqrt{2n}\bar{K})  \Big)\\  \notag
        &-\frac{ 2| \nabla  f _ \sigma | |\nabla  H| }{|\mathring A| }-(1-\sigma)\frac{f_\sigma\Delta\mathring a }{ \mathring a }.\notag
    \end{align}

\begin{align}\label{f11}
        nf_\sigma(\epsilon|A|^2-\sqrt{2n}\bar{K})\leq& \Delta f_{\sigma}-\frac{2\langle \mathring A,  \nabla^2 H \rangle}{\mathring a^{1-\sigma}}
       +(1-\sigma)\frac{f_\sigma\Delta\mathring a }{ \mathring a }\\ \notag
       & + \frac{ 2| \nabla  f _ \sigma | |\nabla  H| }{|\mathring A| }.\notag
    \end{align}
We multiply the equation (\ref{f11}) by $f_\sigma^{p-1}$ and integrate on both sides, then
\begin{eqnarray}\label{p1}
  &&\int_{M_t} nf^p_\sigma(\epsilon|A|^2-\sqrt{2n}\bar{K}) d\mu_t  \\
   &\leq& -\int_{M_t}\frac{2f_\sigma^{p-1}\langle \mathring A,  \nabla^2 H \rangle}{\mathring a^{1-\sigma}}d\mu _ t  + \int_{M_t} (1-\sigma)\frac{f^p_\sigma\Delta\mathring a }{ \mathring a }d\mu_t \nonumber \\
   &&  +\int_{M_t}\frac{ 2f_\sigma^{p-1}| \nabla  f _ \sigma | |\nabla  H| }{|\mathring A| }d \mu_ t. \nonumber
\end{eqnarray}
The Codazzi equation implies $\nabla_{i} \mathring{A}^{\alpha}_{i j} =
\frac{n-1}{n}   \nabla_{j} H^{\alpha}$. From this formula and Lemma \ref{app} (i), we get
\begin{eqnarray}
  & &\ \ \ - \int_{M_{t}} \frac{f_{\sigma}^{p-1} \langle \mathring{A} , \nabla^{2}
  H \rangle}{\mathring{a}^{1- \sigma}}   d\mu_{t} \\
    &  &=  \int_{M_{t}} \nabla_{i} \left( \frac{f_{\sigma}^{p-1} \mathring{A}_{i j}^{\alpha}}{\mathring{a}^{1-
  \sigma}} \right) \nabla_{j} H^{\alpha} d\mu_{t} \label{eC2f2}\nonumber\\
  &  &= \int_{M_{t}} \left( \frac{( p-1 )
  f_{\sigma}^{p-2}\mathring{A}^{\alpha}_{i j}}{\mathring{a}^{1- \sigma}}
  \nabla_{i} f_{\sigma}
   - \frac{( 1- \sigma ) f_{\sigma}^{p-1}\mathring{A}^{\alpha}_{i j} }{\mathring{a}^{2- \sigma}}
    \nabla_{i} \mathring{a}  +
  \frac{f_{\sigma}^{p-1}}{\mathring{a}^{1- \sigma}} \nabla_{i}
  \mathring{A}^{\alpha}_{i j} \right) \nabla_{j} H^{\alpha} d \mu_{t} \nonumber\\
    &&\leq  \int_{M_{t}} \left( ( p-1 )\frac{ f_{\sigma}^{p-1}| \nabla
  f_{\sigma} || \nabla H |}{| \mathring{A} |}
   +\frac{2f_{\sigma}^{p}| \nabla H |^{2}}{| \mathring{A} |^{2}}  \right) d \mu_{t}. \nonumber
\end{eqnarray}
and
\begin{eqnarray}\label{p2}
  \int_{M_{t}} \frac{f_{\sigma}^{p}}{\mathring{a}} \Delta
  \mathring{a}   d \mu_{t} & = & - \int_{M_{t}} \left\langle \nabla
  \left( \frac{f_{\sigma}^{p}}{\mathring{a}} \right) , \nabla
  \mathring{a} \right\rangle d \mu_{t} \\ \nonumber
  & = & \int_{M_{t}} \left( - \frac{p f_{\sigma}^{p-1}}{\mathring{a}}
  \left\langle \nabla f_{\sigma} , \nabla \mathring{a} \right\rangle +
  \frac{f_{\sigma}^{p}}{\mathring{a}^{2}} \left| \nabla \mathring{a}
  \right|^{2} \right) d \mu_{t}\label{inte2}\\ \nonumber
  & \leq & \int_{M_{t}} \left( \frac{p f_{\sigma}^{p-1}|
  \nabla f_{\sigma} | |   \nabla H |}{|\mathring{A}|}  + \frac{f_{\sigma}^{p}|
  \nabla H |^{2}}{|\mathring{A}|^{2}}  \right) d \mu_{t} . \nonumber
\end{eqnarray}

Putting (\ref{p1})-(\ref{p2}) together, we obtain
\begin{eqnarray}
  &&\int_{M_t} nf^p_\sigma(\epsilon|A|^2-\sqrt{2n}\bar{K}) d\mu_t  \\
   &\leq& \int_{M_{t}} \left( \frac{3p f_{\sigma}^{p-1}|
  \nabla f_{\sigma} | |   \nabla H |}{|\mathring{A}|}  + \frac{5f_{\sigma}^{p}|
  \nabla H |^{2}}{|\mathring{A}|^{2}}  \right) d \mu_{t} \nonumber\\
  &\leq& \int_{M_{t}} \left( \frac{2(p-1)}{\eta}f_{\sigma}^{p-2}|\nabla f_{\sigma} |^2
  + \Big(2(p-1)\eta+5\Big)\frac{f_{\sigma}^{p}|
  \nabla H |^{2}}{|\mathring{A}|^{2}}  \right) d \mu_{t}. \nonumber
\end{eqnarray}

\end{proof}

Now we show that the $L^p$-norm of $f_\sigma$ is bounded for sufficiently large $p$.
\begin{lemma}\label{lem1}For any $p\geq\frac{n^3}{32\epsilon}+1$ and
$\sigma\leq \frac{n\epsilon\sqrt{\epsilon}}{24\sqrt{p-1}}$,
there exist a constant $C$ depending only on the initial surface such that for all
$t\in [0, T)$ where $T\leq\infty$, we have
\begin{eqnarray}\label{8-ineq}
\bigg(\int_{M_t}f_\sigma^pd\mu_t\bigg)^{\frac{1}{p}}\leq Ce^{-2\sigma\bar{K}t}.
\end{eqnarray}
\end{lemma}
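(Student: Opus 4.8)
The plan is to obtain a differential inequality for $t\mapsto\int_{M_t} f_\sigma^p\,d\mu_t$ and then integrate it by Gronwall. Since the volume element evolves by $\frac{\partial}{\partial t}\,d\mu_t=-|H|^2\,d\mu_t$, Lemma~\ref{yanhua1} gives
\[
\frac{d}{dt}\int_{M_t} f_\sigma^p\,d\mu_t\le\int_{M_t}\Big(p f_\sigma^{p-1}\Delta f_\sigma+\frac{2p f_\sigma^{p-1}|\nabla f_\sigma||\nabla H|}{|\mathring A|}-\frac{2\epsilon p}{n}\frac{f_\sigma^p|\nabla H|^2}{|\mathring A|^2}+6\sigma p|A|^2 f_\sigma^p-2p(n\sigma+\epsilon)\bar K f_\sigma^p-|H|^2 f_\sigma^p\Big)d\mu_t.
\]
Since $M_t$ is closed, integrating by parts turns $\int p f_\sigma^{p-1}\Delta f_\sigma$ into $-p(p-1)\int f_\sigma^{p-2}|\nabla f_\sigma|^2$, and the term $-\int|H|^2 f_\sigma^p$ is nonpositive and may be discarded. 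The task is then to absorb the three surviving positive contributions: the gradient cross term, the reaction term $6\sigma p|A|^2 f_\sigma^p$, and the zeroth-order term.

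The cross term I would handle by Young's inequality, distributing it between the Dirichlet term $-p(p-1)\int f_\sigma^{p-2}|\nabla f_\sigma|^2$ and the good term $-\frac{2\epsilon p}{n}\int f_\sigma^p|\nabla H|^2/|\mathring A|^2$; the hypothesis $p\ge\frac{n^3}{32\epsilon}+1$ --- which for $n\ge7$ forces $p-1\ge n/\epsilon$ --- is exactly what lets the Dirichlet part of this splitting be absorbed. The reaction term is the real danger, since it is positive and carries a factor $p$; here I would invoke the Poincar\'e inequality of Lemma~\ref{Poincare1} applied to $\int_{M_t}|A|^2 f_\sigma^p\,d\mu_t$, with the free parameter $\eta>0$ yet to be fixed, re-expressing $6\sigma p\int|A|^2 f_\sigma^p$ through $\int f_\sigma^{p-2}|\nabla f_\sigma|^2$, $\int f_\sigma^p|\nabla H|^2/|\mathring A|^2$ and $\bar K\int f_\sigma^p$. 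Then $\eta$ is chosen (of size comparable to $\sigma/\epsilon$, with an $n$-dependent constant) so that the new Dirichlet contribution is again swallowed by what is left of $-p(p-1)\int f_\sigma^{p-2}|\nabla f_\sigma|^2$.

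After these reductions the inequality collapses to
\[
\frac{d}{dt}\int_{M_t} f_\sigma^p\,d\mu_t\le c_1\int_{M_t}\frac{f_\sigma^p|\nabla H|^2}{|\mathring A|^2}\,d\mu_t+c_2\,\bar K\int_{M_t} f_\sigma^p\,d\mu_t,
\]
and the crux is to verify, from the two hypotheses, that $c_1\le0$ and $c_2\le-2p\sigma$. The bound $\sigma\le\frac{n\epsilon\sqrt\epsilon}{24\sqrt{p-1}}$ --- equivalently $\sigma^2(p-1)\le\frac{n^2\epsilon^3}{576}$ --- is what tames the $|\nabla H|^2$-coefficient coming out of the Poincar\'e step, giving $c_1\le0$; and the bound on $p$ is what makes the zeroth-order balance $-2p(n\sigma+\epsilon)+\frac{\sqrt{2n}}{\epsilon}\cdot6\sigma p\le-2p\sigma$ hold (the two constraints are in fact jointly calibrated so that both balances go through at the extreme allowed value of $\sigma$). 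Granting $c_1\le0$ and $c_2\le-2p\sigma$, we get $\frac{d}{dt}\int_{M_t} f_\sigma^p\,d\mu_t\le-2p\sigma\bar K\int_{M_t} f_\sigma^p\,d\mu_t$, so by Gronwall $\int_{M_t} f_\sigma^p\,d\mu_t\le e^{-2p\sigma\bar K t}\int_{M_0} f_\sigma^p\,d\mu_0$. Taking $p$-th roots and using $\big(\int_{M_0} f_\sigma^p\,d\mu_0\big)^{1/p}\le\big(\sup_{M_0}f_\sigma\big)\max\{1,\mathrm{Vol}(M_0)\}=:C$, a constant depending only on $F_0$, yields (\ref{8-ineq}).

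The step I expect to be the main obstacle is precisely this simultaneous bookkeeping: one must pick the Young parameter and the Poincar\'e parameter $\eta$ so that all Dirichlet-type contributions cancel against the single term $-p(p-1)\int f_\sigma^{p-2}|\nabla f_\sigma|^2$, while keeping the leftover coefficient $c_1$ of the $|\nabla H|^2$-term nonpositive and the leftover zeroth-order coefficient $c_2$ at most $-2p\sigma$. It is the quantitative interplay between ``$p$ large'' and ``$\sigma$ small'', encoded in $p-1\ge\frac{n^3}{32\epsilon}$ together with $\sigma\sqrt{p-1}\le\frac{n\epsilon^{3/2}}{24}$, that makes all of these estimates close up; the surrounding integration-by-parts and Young's-inequality manipulations are routine.
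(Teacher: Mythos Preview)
Your outline is correct and follows the paper's argument essentially step for step: differentiate $\int_{M_t} f_\sigma^p\,d\mu_t$, integrate the Laplacian by parts, split the cross term by Young's inequality, feed the reaction term $6\sigma p\int |A|^2 f_\sigma^p$ through the Poincar\'e estimate of Lemma~\ref{Poincare1}, and balance the constants so that only $-2p\sigma\bar K\int f_\sigma^p$ survives. The paper makes the same choices explicit: Young parameter $\mu=\tfrac{2}{p-1}$ and Poincar\'e parameter $\eta=\tfrac{24\sigma}{n\epsilon}$.

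One small misattribution: the Young step itself only needs $p-1\gtrsim 1/\epsilon$, not the full $p-1\ge n^3/(32\epsilon)$. The strong lower bound on $p$ is used indirectly---it is what guarantees that the single hypothesis $\sigma\le \frac{n\epsilon\sqrt{\epsilon}}{24\sqrt{p-1}}$ simultaneously implies the two auxiliary smallness conditions $\sigma\le \frac{\epsilon^2}{3\sqrt{2n}}$ (for the zeroth-order balance) and $\sigma\le \frac{n\epsilon^2}{60}$ (for the constant piece of the $|\nabla H|^2$ coefficient). Your parenthetical remark that the two constraints are ``jointly calibrated'' is exactly this point, so the argument closes; just be aware that the zeroth-order balance is governed by $\sigma$, not by $p$ directly.
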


\begin{proof}For $t\geq t_0$, form Lemma \ref{yanhua1}, we have
\begin{eqnarray}\label{9-ineq1}
  \frac{\partial}{\partial t}\int_{M_t}f_\sigma^pd\mu_t &\leq& \int_{M_t}pf_\sigma^{p-1}\frac{\partial}{\partial t}f_\sigma
d\mu_t \\
   &\leq& -p(p-1)\int_{M_t}f_\sigma^{p-2}|\nabla
f_\sigma|^2d\mu_t \nonumber \\
   && 2p\int_{M_t}\frac{ f^{p-1} _ \sigma| \nabla  f_ \sigma | |\nabla  H| }{|\mathring A| } d\mu_t -2p\epsilon
\int_{M_t}\frac{f_{\sigma}^{p}|\nabla H|^2}{|\mathring A|^2}d\mu_t \nonumber\\
   && +6\sigma p\int_{M_t}|A|^2f_\sigma^pd\mu_t-2p(n\sigma+\epsilon)\bar{K}\int_{M_t}f_\sigma^{p}d\mu_t. \nonumber
\end{eqnarray}
In view of the pinching condition we can estimate
\begin{eqnarray}\label{10-ineq1}
   && 2p\int_{M_t}\frac{ f^{p-1} _ \sigma| \nabla  f_ \sigma | |\nabla  H| }{|\mathring A| } d\mu_t \\
   &\leq& \frac{p}{\mu}\int_{M_t}f_\sigma^{p-2}|\nabla
f_\sigma|^2d\mu_t+p\mu\int_{M_t}\frac{f_{\sigma}^{p}|\nabla H|^2}{|\mathring A|^2}d\mu_t. \nonumber
\end{eqnarray}
Substituting (\ref{10-ineq1}) to (\ref{9-ineq1}),  letting
$\mu=\frac{2}{p-1}$ and
$p\geq\frac{2}{\epsilon}+1$  we obtain
\begin{eqnarray}
  \frac{\partial}{\partial t}\int_{M_t}f_\sigma^pd\mu_t  &\leq&
-\frac{p(p-1)}{2}\int_{M_t}f_\sigma^{p-2}|\nabla
f_\sigma|^2d\mu_t \\
   &&  -p\epsilon
\int_{M_t}\frac{f_{\sigma}^{p-1}}{|H|^{2(1-\sigma)}}|\nabla
A|^2d\mu_t\nonumber \\
   &&  +6\sigma p\int_{M_t}|A|^2f_\sigma^pd\mu_t-2p(n\sigma+\epsilon)\bar{K}\int_{M_t}f_\sigma^{p}d\mu_t. \nonumber
\end{eqnarray}
This together with Lemma \ref{Poincare1} implies
\begin{eqnarray}\label{11-ineq1}
  &&\frac{\partial}{\partial t}\int_{M_t}f_\sigma^pd\mu_t \\
  &\leq& -p(p-1)\bigg(\frac{1}{2}-\frac{12\sigma
}{n\eta \epsilon
}\bigg)\int_{M_t}f_\sigma^{p-2}|\nabla
f_\sigma|^2d\mu_t \nonumber\\
   && -p\bigg(\epsilon-\frac{6\sigma \big(2(p-1)\eta+5\big)}{n\epsilon}\bigg)
\int_{M_t}\frac{f_{\sigma}^{p-1}}{(\alpha|H|+\beta \bar{K})^{1-\sigma}}|\nabla
A|^2d\mu_t \nonumber\\
   && -2\Big(\epsilon-\frac{3\sigma\sqrt{2n}}{\epsilon}\Big) p\bar{K}\int_{M_t}f_\sigma^{p}d\mu_t-2n\sigma p\bar{K}\int_{M_t}f_\sigma^{p}d\mu_t. \nonumber
\end{eqnarray}
Now we pick
$\eta=\frac{24\sigma}{n\epsilon}$, $p\geq\frac{n^3}{32\epsilon}+1$ and let
$$\sigma\leq
\min\big\{\frac{\epsilon^2}{3\sqrt{2n}}, \frac{n\epsilon^2}{60},\frac{n\epsilon\sqrt{\epsilon}}{24\sqrt{p-1}}
\big\}=\frac{n\epsilon\sqrt{\epsilon}}{24\sqrt{p-1}}$$ such that
\begin{equation}
 6\sigma \big(2(p-1)\eta+5\big) \leq \frac n2 \epsilon^2+\frac n2 \epsilon^2=n \epsilon^2.
\end{equation}
Then (\ref{11-ineq1}) reduces to
\begin{eqnarray*}
\frac{\partial}{\partial t}\int_{M_t}f_\sigma^pd\mu_t
\leq-2\sigma p\bar{K}\int_{M_t}f_\sigma^{p}d\mu_t,
\end{eqnarray*}
and this implies
\begin{eqnarray}
\int_{M_t}f_\sigma^pd\mu_t\leq
e^{-2\sigma p\bar{K}t}\int_{M_{t_0}}f_\sigma^{p}d\mu_t.
\end{eqnarray}
\end{proof}

Then we can proceed by a Stampacchia iteration procedure as in \cite{B,H3} to bound $f_\sigma$ in $L^\infty$ and complete the proof of Proposition \ref{san1}.

Applying Proposition \ref{san1}, we have the following gradient estimation.
\begin{proposition}[\cite{B} Theorem 5.8]
For every $\eta>0$, there exists a constant $C_\eta$ depending only on $\eta$ such that for all time $t$, there holds
$$|\nabla H|^2\leq (\eta |H|^4+C_\eta)e^{-\sigma t}.$$
\end{proposition}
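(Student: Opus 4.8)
The plan is to run the Huisken--Baker scheme for gradient estimates along the mean curvature flow and feed in the decay of $|\mathring A|^2$ obtained in Proposition \ref{san1}. First I would record the evolution inequality for $|\nabla A|^2$ in $\mathbb{S}^{n+p}(1/\sqrt{\bar K})$: combining the commutation formula for $\nabla$ and $\partial_t$ with the Simons-type identity underlying $(\ref{Simons})$ and the evolution equations of Lemma \ref{ee}, one gets
\begin{equation*}
  \Big(\frac{\partial}{\partial t}-\Delta\Big)|\nabla A|^2 \;\le\; -2|\nabla^2 A|^2 + c_1|A|^2|\nabla A|^2 + c_2\bar K|\nabla A|^2
\end{equation*}
for dimensional constants $c_1,c_2>0$. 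Since $(\ref{eqn_gradient1})$ gives $|\nabla H|^2\le\tfrac{n+2}{3}|\nabla A|^2$, it is enough to bound $|\nabla A|^2$. The only dangerous term above is $c_1|A|^2|\nabla A|^2$, which is not a priori controlled in the finite-time ``round point'' regime where $|H|$ may blow up.

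Next I would use $|A|^2=|\mathring A|^2+\tfrac1n|H|^2$ together with Proposition \ref{san1}, which gives $|\mathring A|^2\le C(|H|^2+\bar K)^{1-\sigma}e^{-2\sigma t}$; thus $|\mathring A|^2$ is bounded and decays, so $c_1|A|^2\le\tfrac{c_1}{n}|H|^2+\varepsilon(t)$ with $\varepsilon(t)\to0$. Fixing $\eta>0$, the strategy (exactly as in \cite[Section 5]{B}, following \cite{H3} in codimension one) is to apply the parabolic maximum principle to an auxiliary function built from $|\nabla A|^2$, a small multiple $\eta(|H|^2+\bar K)^2$ and a large multiple of the decaying quantity $|\mathring A|^2$, so that at an interior space--time maximum with the function large one has $|\nabla A|^2$ of order $\eta|H|^4$; the negative Hessian term $-2|\nabla^2 A|^2$, the extra negative terms from differentiating $|\mathring A|^2$, and the smallness coming from Proposition \ref{san1} are then arranged to beat the reaction terms once $\eta$ is fixed and the remaining constants are chosen large. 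This yields $|\nabla A|^2\le(\eta|H|^4+C_\eta)e^{-\sigma t}$ for a constant $C_\eta$ depending only on $\eta$ and the initial data, and $(\ref{eqn_gradient1})$ then gives the stated estimate for $|\nabla H|^2$ up to relabelling $\eta$. An alternative route, parallel to the proof of Proposition \ref{san1}, is a Stampacchia iteration applied to $|\nabla H|^2/\mathring a$.

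The main obstacle is precisely the term $c_1|A|^2|\nabla A|^2$: because $|H|$ can blow up in finite time there is no hope of an absolute bound on $|\nabla A|^2$, only one of the form $\eta|H|^4+C_\eta$, and closing the coefficient bookkeeping requires playing off two separate sources of negativity --- the genuinely negative third-order term $-2|\nabla^2 A|^2$ (usable through the Kato inequality $(\ref{eqn_gradient1})$ and completing the square) and the decaying smallness of $|\mathring A|^2$ supplied by Proposition \ref{san1}, the latter being exactly what produces the factor $e^{-\sigma t}$. As the statement is Baker's Theorem 5.8 of \cite{B}, I would in practice present the proof by quoting that result and indicating the single modification: replacing Baker's pinching-improvement estimate by our Proposition \ref{san1}, after which the argument carries over verbatim.
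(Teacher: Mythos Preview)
Your proposal is correct and matches the paper's approach: the paper gives no proof at all for this proposition but simply cites it as Baker's Theorem 5.8, relying on Proposition~\ref{san1} as the pinching-improvement input, which is exactly what your final paragraph recommends. Your preceding sketch of the Huisken--Baker auxiliary-function argument is a faithful outline of what Baker actually does and goes beyond what the paper provides.
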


To estimate the diameter of $M_{t}$, we need the lower bound of the Ricci curvature along the mean curvature flow. By Proposition 2 of {\cite{MR1458750}}, the Ricci curvature of $M$ satisfies
  \begin{equation*}  \operatorname{Ric}_M \geq \frac{n-1}{n} \left( n\bar K + \frac{1}{n} | H |^{2} - | \mathring{A}
     |^{2} - \frac{n-2}{\sqrt{n ( n-1 )}} | H | |\mathring{A}| \right) . \end{equation*}
  From the pinching condition $| \mathring{A} |^{2} < \mathring{a} - \epsilon \omega$ and Lemma \ref{app} (iii), we
  obtain $\operatorname{Ric}_M \geq \frac{n-1}{n}  \epsilon    \omega >\frac{\epsilon}{n}
   | H |^{2}$.

Combining the gradient estimation and the well-known Myers theorem we can get the following lemma easily.
\begin{lemma}[\cite{LX2} Lemma 6.2]
  Suppose that $M$ is an n-dimensional
  submanifold in $\mathbb{S}^{n+p}(\frac{1}{\sqrt{\bar K}})$ satisfying $|\mathring{A}|^{2} <
  \mathring{a}-\epsilon\omega$ and $| \nabla H | <2  \eta^{2}   \max_{M} |
  H |^{2}$, where $0< \eta < \epsilon$.
  Then we have
  $ \frac{\min_{M} | H |^{2}}{\max_{M} | H |^{2}} >1- \eta $
  and $ \operatorname{diam}  M \leq ( 2  \eta  \max_{M} | H |
     )^{-1}$.
\end{lemma}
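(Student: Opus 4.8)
The plan is to read off both conclusions from the interplay of a Bonnet--Myers diameter bound with an oscillation estimate for $|H|$ coming from the gradient hypothesis, organized as a short bootstrap; all constants depend only on $n$ and $\epsilon$, and the standing assumption $\eta<\epsilon\ll\frac1{n^2}$ is used throughout to make the numerology close.

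The pinching hypothesis already supplies a Ricci lower bound: by the estimate $\operatorname{Ric}_M\ge\frac{n-1}{n}\bigl(n\bar K+\frac1n|H|^2-|\mathring A|^2-\frac{n-2}{\sqrt{n(n-1)}}|H|\,|\mathring A|\bigr)$ of \cite{MR1458750}, combined with $|\mathring A|^2<\mathring a-\epsilon\omega$ and Lemma~\ref{app}(iii), one gets $\operatorname{Ric}_M\ge\frac{n-1}{n}\epsilon\omega\ge\frac{\epsilon}{n}|H|^2>0$ pointwise, as recorded just above the statement. In particular $\operatorname{Ric}_M\ge\frac{\epsilon}{n}(\min_M|H|)^2$ everywhere, so Bonnet--Myers gives $\operatorname{diam} M\le\pi\sqrt{n(n-1)/\bigl(\epsilon(\min_M|H|)^2\bigr)}$ as soon as $\min_M|H|>0$.

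Next I would estimate the oscillation of $|H|$. From $\bigl|\nabla|H|\bigr|=\frac{\bigl|\nabla|H|^2\bigr|}{2|H|}\le|\nabla H|<2\eta^2(\max_M|H|)^2$, integrating along a minimizing geodesic joining a point where $|H|$ is least to one where it is greatest yields $\max_M|H|-\min_M|H|\le 2\eta^2(\max_M|H|)^2\operatorname{diam} M$. Before combining this with the diameter bound I must rule out the degeneration $\min_M|H|\ll\max_M|H|$: on the closed set $S=\{\,|H|\ge\tfrac12\max_M|H|\,\}$ the Ricci bound reads $\operatorname{Ric}_M\ge\frac{\epsilon}{4n}(\max_M|H|)^2$, so any minimizing geodesic from a maximum point to a boundary point of $S$ (which lies in $S$ up to that point) has length at most $\frac{2\pi}{\max_M|H|}\sqrt{n(n-1)/\epsilon}$ by Bonnet--Myers, and integrating the gradient bound along it keeps $|H|>\tfrac12\max_M|H|$ there, contradicting the defining property of $\partial S$ unless $S=M$ (when $M$ is not minimal; the minimal case is vacuous). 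Hence $\min_M|H|\ge\tfrac12\max_M|H|>0$; the diameter bound of the previous paragraph applies, and writing $r=\min_M|H|/\max_M|H|\in[\tfrac12,1]$ the two displayed inequalities combine to $r(1-r)\le 2\pi\eta^2\sqrt{n(n-1)/\epsilon}$. Since $r\ge\tfrac12$ this forces $1-r\le 4\pi\eta^2\sqrt{n(n-1)/\epsilon}$, hence $r^2\ge 1-8\pi\eta^2\sqrt{n(n-1)/\epsilon}>1-\eta$ because $\eta<\epsilon\ll\frac1{n^2}$, which is the first assertion. Feeding $\min_M|H|^2>(1-\eta)(\max_M|H|)^2$ back into Bonnet--Myers gives $\operatorname{diam} M\le\frac{\pi}{\max_M|H|}\sqrt{n(n-1)/\bigl(\epsilon(1-\eta)\bigr)}$, and $2\pi\eta\sqrt{n(n-1)/\bigl(\epsilon(1-\eta)\bigr)}\le1$ for $\epsilon$ small enough, giving the second assertion.

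The main obstacle is precisely the circular dependence between these two estimates: there is no diameter bound without a lower bound on $|H|$, and no control of the oscillation of $|H|$ without a diameter bound. Breaking it requires the continuity argument on $S$ above, after which the remaining work is bookkeeping --- verifying that $\eta<\epsilon\ll\frac1{n^2}$ is quantitatively strong enough that the bootstrap closes with exactly the constants $1-\eta$ and $(2\eta\max_M|H|)^{-1}$ demanded by the statement, rather than merely ``comparable'' ones.
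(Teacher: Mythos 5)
Your argument is correct and follows the standard Huisken--Lei--Xu route that the paper (which here simply cites \cite{LX2} and says ``combine the gradient estimation and the Myers theorem'') has in mind: lower Ricci bound from the pinching hypothesis and Lemma~\ref{app}(iii), a Myers-type length bound on minimizing geodesic segments, and an oscillation estimate for $|H|$ from the gradient hypothesis, assembled via a bootstrap to break the apparent circularity. You correctly identify that the genuine content is the bootstrap, and your numerology does close with the constants $1-\eta$ and $(2\eta\max_M|H|)^{-1}$ once $\eta<\epsilon\ll n^{-2}$ is used.

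Two small imprecisions are worth tightening. First, your parenthetical ``(which lies in $S$ up to that point)'' is not automatic for \emph{any} boundary point of $S$; it holds if you take $y$ to be a \emph{nearest} boundary point to the maximum point $x$ (otherwise the geodesic would have crossed $\partial S$ earlier, contradicting minimality of $d(x,y)$), or equivalently one should truncate an arbitrary minimizing geodesic from $x$ at the first exit time from $\{|H|>\tfrac12\max_M|H|\}$ and run the Myers index-form estimate on that subsegment. Second, what you invoke as ``Bonnet--Myers'' on a geodesic segment confined to $S$ is really the index-form version of Myers' lemma (a minimizing geodesic along which $\operatorname{Ric}(\gamma',\gamma')\ge (n-1)k$ has length $\le\pi/\sqrt k$), since the closed set $S$ is not itself a complete manifold; it's worth saying so explicitly. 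With these two clarifications the proof is complete and is essentially the paper's intended argument.
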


Now we can complete the proof of the theorem \ref {main2} for $n\geq7$.
\begin{proof}[Proof of theorem \ref {main2}]
From Lemma 4.7, we know $\frac{\min_{M} |H|}{\max_{M} |H|}\rightarrow 1$ and the diameter of $M_t$ is bounded alone the mean curvature flow. By similar arguments as in \cite{B},  we obtain the mean curvature flow with initial value $F$
either converges to a round point in finite time, or
converges to a total geodesic sphere of $\mathbb{S}^{n+p}(\frac{1}{\sqrt{\bar K}})$ as
$t\rightarrow\infty$.
\end{proof}

\section{Another sharp convergence theorem}
Putting
\begin{equation}\label{gamma}
  b(x)=(1-\delta)\Big(\frac{x}{n-1}+2 \bar K\Big)+\delta \alpha(x), \ \ \mathring{b}(x)=b(x)-\frac{x}{n},
\end{equation}
where $\delta=\begin{cases}
            \frac{\sqrt{12n+9}-7}{2(n-2)}, \ & 4\leq n\leq12, \\
            \frac{2(2n-5)}{n^2-2}, \ & n\geq 13.
          \end{cases}$
Then we prove the following sharp convergence theorem.

\begin{thm}\label{t2}
Let $F_0 :M \rightarrow \mathbb{S}^{n+p}(\frac{1}{\sqrt{\bar K}})$ be an n-dimensional $(n\geq4)$ smooth compact submanifold immersed in the sphere. If $M$ satisfies
\begin{equation}\label{cons2}|A|^2 \leq b(|H|^2),
\end{equation}
then the mean curvature flow with the initial value $F_0$  converges to a round point in finite time, or
converges to a total geodesic sphere of $\mathbb{S}^{n+p}(\frac{1}{\sqrt{\bar K}})$ as
$t\rightarrow\infty$.
\end{thm}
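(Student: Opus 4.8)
The plan is to follow the same architecture as the proof of Theorem~\ref{main2} in the range $n\geq 7$, but with the function $a(x)$ replaced throughout by $b(x)=(1-\delta)(\tfrac{x}{n-1}+2\bar K)+\delta\alpha(x)$ and $\mathring a$ replaced by $\mathring b(x)=b(x)-\tfrac{x}{n}$. There are three pillars: (1) an analogue of Lemma~\ref{app} establishing the pointwise differential inequalities that $\mathring b$ must satisfy (the bounds on $\frac{4x(\mathring b')^2}{\mathring b}$, on $2x\mathring b''+\mathring b'$, the Ricci-positivity bound $\tfrac{n-2}{\sqrt{n(n-1)}}\sqrt{x\mathring b}+\mathring b<\tfrac{x}{n}+n\bar K$, and the key negativity estimate replacing parts (iv)--(vi)); (2) preservation of the pinching $|\mathring A|^2<\mathring b-\epsilon\omega$ along the flow, by the maximum principle applied to $U=|\mathring A|^2-\mathring b+\epsilon\omega$ exactly as in Proposition~\ref{baochi}; and (3) the Stampacchia iteration giving the decay estimate $|\mathring A|^2\leq C(|H|^2+\bar K)^{1-\sigma}e^{-2\sigma t}$, then the Ricci lower bound, the gradient estimate, the diameter bound, and finally the convergence dichotomy via Myers' theorem and Baker's argument, all as in Section~4.

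First I would record the elementary computations for $\mathring b$. Since $b$ is a convex combination of the linear function $\tfrac{x}{n-1}+2\bar K$ (for which the traceless version is affine with slope $\tfrac1{n-1}-\tfrac1n=\tfrac1{n(n-1)}$ and second derivative $0$) and $\alpha(x)$ (whose derivatives are already known from the definition of $\alpha$ and from Lei--Xu's work), one gets $\mathring b'=(1-\delta)\tfrac1{n(n-1)}+\delta\mathring\alpha'$ and $\mathring b''=\delta\mathring\alpha''$, both explicit. The choice of $\delta$ — namely $\tfrac{\sqrt{12n+9}-7}{2(n-2)}$ for $4\leq n\leq 12$ and $\tfrac{2(2n-5)}{n^2-2}$ for $n\geq 13$ — is engineered precisely so that the quadratic discriminant arising in the analogue of Lemma~\ref{negative} is non-positive; I would verify that $2x\mathring b''+\mathring b'<\tfrac{2(n-1)}{n(n+2)}$ (so the $|\nabla H|^2$ coefficient stays negative), that $\tfrac{4x(\mathring b')^2}{\mathring b}<1$, and that the Ricci-type inequality $\tfrac{n-2}{\sqrt{n(n-1)}}\sqrt{x\mathring b}+\mathring b<\tfrac{x}{n}+n\bar K$ holds. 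These are all single-variable polynomial-after-clearing-radicals inequalities, so each reduces, after the substitution $\bar K=1$ and rationalization, to checking a polynomial has a fixed sign on $x\geq 0$, splitting into finitely many dimension ranges where needed, plus an endpoint numerical check for the small values $n=4,5,6$.

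With those inequalities in hand, the preservation step is verbatim: writing $U=|\mathring A|^2-\mathring b+\epsilon\omega$, Lemmas~\ref{ee} and~\ref{eq} together with the evolution equation for $\mathring b$ (identical in form to \eqref{a}) give
\begin{align*}
\Big(\frac{\partial}{\partial t}-\Delta\Big)U&\leq 2\Big(-\tfrac{2(n-1)}{n(n+2)}+\mathring b'-\tfrac{\epsilon}{n-1}+2|H|^2\mathring b''\Big)|\nabla H|^2\\
&\quad+2U\big(\cdots\big)+2U^2+2\Big(b(\mathring b-|H|^2\mathring b')-n\bar K(\mathring b+|H|^2\mathring b')\Big)\\
&\quad+2P_2\Big(2\mathring b-\tfrac{|H|^2}{n}+|H|^2\mathring b'-\tfrac32 P_2\Big)+(\text{$\epsilon$-terms}),
\end{align*}
and one shows the right side is $\leq 2U(\cdots)+2U^2$ using the analogues of Lemma~\ref{app}(vi) and Lemma~\ref{negative}; the maximum principle then keeps $U<0$. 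The decay estimate for $|\mathring A|^2$ requires the analogue of Lemma~\ref{Z2}: from $R_1-\tfrac1n R_2\leq|\mathring A|^4+\cdots$ and the pinching $|A|^2\leq b(|H|^2)-\epsilon\omega$ one again obtains $n\bar K|\mathring A|^2-R_1+R_3\geq\tfrac n2|\mathring A|^2(\epsilon|A|^2-\text{const}\cdot\bar K)$, which is what the Poincar\'e inequality and the $L^p$-iteration in Lemmas~\ref{Poincare1}--\ref{lem1} need. The remainder of Section~4 then applies unchanged.

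The main obstacle is the analogue of Lemma~\ref{negative} for small $n$: the whole point of the piecewise definition of $\delta$ is that for $4\leq n\leq 6$ the linear pinching $\tfrac{x}{n-1}+2\bar K$ dominates enough of the behavior that the sharp quadratic discriminant condition can be met, while for $n\geq 13$ a different, smaller $\delta$ works; the transitional values $n=7,\dots,12$ may again require a direct numerical maximization of the relevant one-variable function over $x\geq 0$, as was done for $n=7,8$ in the proof of Lemma~\ref{negative}. I expect that verifying the discriminant inequality — i.e. that for each admissible $n$ the quadratic in $x$ coming from $\big(2\mathring b-\tfrac xn+x\mathring b'\big)^2<6\big(n(\mathring b+x\mathring b')-b(\mathring b-x\mathring b')\big)$ (or the corresponding non-strict version at the boundary) has negative discriminant after clearing radicals — is where essentially all the real work lies, and it is precisely here that the specific numbers $\tfrac{\sqrt{12n+9}-7}{2(n-2)}$ and $\tfrac{2(2n-5)}{n^2-2}$ come from. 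Once that quadratic-discriminant computation is dispatched, everything else is a transcription of Sections~3 and~4 with $a\rightsquigarrow b$, $\mathring a\rightsquigarrow\mathring b$.
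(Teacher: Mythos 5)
Your proposal reproduces the correct architecture and correctly identifies the quadratic discriminant in $P_2$ as the crux, but it substantially misjudges how that discriminant is actually dispatched in the paper, in a way worth pointing out. You expect that the inequality $\bigl(2\mathring b-\tfrac xn+x\mathring b'\bigr)^2<6\bigl(n\bar K(\mathring b+x\mathring b')-b(\mathring b-x\mathring b')\bigr)$ must be established as a one-variable inequality in $x$ for each $n$, possibly requiring case splits and numerical maximization for the "transitional" $n=7,\dots,12$ as was done for $n=7,8$ in Lemma~\ref{negative}. That is not what Section~5 does, and the design of $b$ is precisely what avoids it: Lemma~\ref{app2}(iv) and (vi) give the \emph{$x$-independent} bounds $b(\mathring b-x\mathring b')-n\bar K(\mathring b+x\mathring b')<-2(1-\delta)(n-2)\bar K^2$ and $2\mathring b-\tfrac xn+x\mathring b'\le 2(\delta(n-2)+2)\bar K$, valid for all $x\ge 0$. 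Once these are in place, the relevant expression is bounded above by the $x$-free quadratic $-2(1-\delta)(n-2)\bar K^2+2(\delta(n-2)+2)\bar K P_2-\tfrac32P_2^2$, whose discriminant is simply $\delta^2(n-2)^2+7\delta(n-2)+4-3(n-2)$. Requiring this to be $\le0$ is the single algebraic inequality that pins down $\delta$: solving for $\delta(n-2)$ gives exactly the threshold $\tfrac{\sqrt{12n+9}-7}{2}$, whence $\delta=\tfrac{\sqrt{12n+9}-7}{2(n-2)}$ for the small-$n$ range, while for $n\ge13$ the binding constraint becomes the $|\nabla H|^2$-coefficient bound in Lemma~\ref{app2}(ii) (which requires $\delta\le\tfrac{2(2n-5)}{n^2-4}$), and one takes the slightly smaller $\tfrac{2(2n-5)}{n^2-2}$. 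No dimension-by-dimension numerics are needed anywhere. The rest of your outline — preservation via the maximum principle applied to $U=|\mathring A|^2-\mathring b+\epsilon\omega$, the Simons-identity lower bound (with the constant $4\bar K$ in place of $\sqrt{2n}\bar K$, using $\delta(n-2)<4$), the Poincar\'e inequality, Stampacchia iteration, Ricci lower bound via Lemma~\ref{app2}(iii), gradient estimate, and Baker's convergence dichotomy — matches the paper and transcribes Sections~3 and~4 as you say.
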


\begin{lemma}
  $\label{app2}$For $x \geq 0$,
  We have the following properties.
  \begin{enumerateroman}
    \item $\frac{4x (\mathring{b}')^2}{\mathring{b}}  < 1,$

    \item $2x  \mathring{b}''  + \mathring{b}'  < \frac{2
    ( n-1 )}{n ( n+2 )}, \ \  for \ \delta\leq \frac{2(2n-5)}{n^2-4},$
    \item $\frac{n-2}{\sqrt{n(n-1)}}\sqrt{x\mathring{b}}+\mathring{b}  < \frac{x}{n}+n\bar K,$

    \item $\mathring{b}  \cdot \left(
    b -n\bar K  \right)-x\mathring{b}' \cdot \left( b +n\bar K  \right) < -2(1-\delta)(n-2)\bar K^2 $,

    \item $\frac{x}{n-1}\left( b  +n\bar K
    \right) - (\frac{x}{n-1}+2n\bar K) \left(  \mathring{b} +
    b  -n\bar K - x\mathring{b}' \right) \\
     < -\frac{2\bar Kx}{n-1}+ 2n(n-4)\bar K^2 $,

    \item $2\mathring{b}-\frac{x}{n}+x\mathring{b}'< 2\big(\delta (n-2)+2\big)\bar K$.
  \end{enumerateroman}
\end{lemma}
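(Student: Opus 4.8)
The plan is to exploit the convex-combination structure $b=(1-\delta)\ell+\delta\alpha$, where $\ell(x)=\tfrac{x}{n-1}+2\bar K$ is Baker's linear pinching bound, and to reduce all six items to a handful of algebraic identities for the Lei-Xu function $\alpha$. Writing $g=\sqrt{x^2+4(n-1)\bar K x}$, I would first record, starting from the defining formula for $\alpha$: the relation $(n-1)(\alpha-n\bar K)^2-n(\alpha-n\bar K)x+x^2=(n-2)^2\bar K x$ (isolate $g$, square, simplify) and its immediate consequence $x\alpha'=\alpha-n\bar K+\tfrac{(n-2)\bar K x}{g}$, whence $\ell-x\ell'=2\bar K$, $\alpha-x\alpha'=n\bar K-\tfrac{(n-2)\bar K x}{g}$, and $\mathring b-x\mathring b'=b-xb'=(2+\delta(n-2))\bar K-\tfrac{\delta(n-2)\bar K x}{g}$ (so $xb'-b<-2\bar K$); the cancellation identity $2x\alpha''+\alpha'=\tfrac{n}{2(n-1)}-\tfrac{n-2}{2(n-1)}\cdot\tfrac{\sqrt x\,(x+6(n-1)\bar K)}{(x+4(n-1)\bar K)^{3/2}}$, whose last term is nonnegative, so the singularities at $x=0$ of $2x\alpha''$ and $\alpha'$ cancel; and, extracted from the first relation, the two sharpness identities $Q(\alpha):=n\bar K(\mathring\alpha+x\mathring\alpha')-\alpha(\mathring\alpha-x\mathring\alpha')\equiv0$ and $\tfrac{n-2}{\sqrt{n(n-1)}}\sqrt{x\mathring\alpha}+\mathring\alpha\equiv\tfrac xn+n\bar K$ (the latter being the equality case of the Ricci lower bound used in the convergence argument). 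I also use $\alpha\ge\ell$ (so $b\ge\ell$ and $\mathring b\ge\mathring\ell=\tfrac{x}{n(n-1)}+2\bar K$) and $\alpha'<\tfrac1{n-1}$ (so $\mathring b'<\tfrac1{n(n-1)}$).

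For (ii): $2x\mathring b''+\mathring b'=\tfrac{1-\delta}{n(n-1)}+\delta\big(2x\alpha''+\mathring\alpha'\big)\le\tfrac{2+\delta n(n-2)}{2n(n-1)}$ after discarding the nonnegative term in the cancellation identity, and elementary algebra makes this $\le\tfrac{2(n-1)}{n(n+2)}$ exactly under the hypothesis $\delta\le\tfrac{2(2n-5)}{n^2-4}$ (the strict inequality of (ii) then holding for all $x\ge0$, since the prescribed $\delta$ is strictly smaller). For (i): by convexity of $t\mapsto t^2$, $(x\mathring b')^2\le(1-\delta)(x\mathring\ell')^2+\delta(x\mathring\alpha')^2$, so $4x(\mathring b')^2<\mathring b$ reduces to the two endpoint estimates $4x(\mathring\ell')^2<\mathring\ell$ (trivial for $n\ge3$) and $4x(\mathring\alpha')^2\le\mathring\alpha$ (the $\alpha$-analogue of Lemma~\ref{app}(i)); the latter is delicate near $x=0$, where $\mathring\alpha'=-\infty$, but using $Q(\alpha)=0$ to write $\mathring\alpha$ and $x\mathring\alpha'$ as a common positive factor times $n\bar K+\alpha$ and $\alpha-n\bar K$ respectively, together with the auxiliary identity $4n(n-1)\bar K+nx-(n-2)g=\tfrac{g(ng-(n-2)x)}{x}$, it collapses to $|nx-(n-2)g|\le\sqrt{n(n-1)}\,g$, which follows from $g\ge x$ and $\sqrt{n(n-1)}>n-2$.

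For (iii): $\phi(f):=\tfrac{n-2}{\sqrt{n(n-1)}}\sqrt{xf}+f$ is increasing, and $\mathring b=(1-\delta)\mathring\ell+\delta\mathring\alpha<\mathring\alpha$ (since $\mathring\alpha>\mathring\ell$ and $\delta<1$), hence $\phi(\mathring b)<\phi(\mathring\alpha)=\tfrac xn+n\bar K$. For (iv): although $Q$ is quadratic, bilinear expansion together with $Q(\alpha)=0$ and $\mathring h-x\mathring h'=h-xh'$ gives the clean splitting
\[
Q(b)=(1-\delta)\,Q(\ell)+\delta(1-\delta)\,(\ell-\alpha)\big((\ell-x\ell')-(\alpha-x\alpha')\big),
\]
with $Q(\ell)=2(n-2)\bar K^2$ and $(\ell-\alpha)\big((\ell-x\ell')-(\alpha-x\alpha')\big)=(n-2)^2\bar K^2\,\tfrac{(g-x)^2}{g(g+x)}\ge0$; since (iv) is precisely the assertion $Q(b)>2(1-\delta)(n-2)\bar K^2$, it holds with that nonnegative gap. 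For (v): substituting $\mathring b-x\mathring b'=b-xb'$, the left side becomes $(xb'-b)\big(\tfrac{x}{n-1}+2n\bar K\big)+\tfrac{2n\bar K x}{n-1}-2n\bar K b+2n^2\bar K^2$, and $xb'-b<-2\bar K$ together with $b\ge\ell$ yields exactly $-\tfrac{2\bar K x}{n-1}+2n(n-4)\bar K^2$.

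Item (vi) is where the size of $\delta$ actually enters. After substitution the $x$-free terms cancel, and multiplying through by $g>0$ and squaring turns (vi) into the positivity on $[0,\infty)$ of $(9-K_0^2)x^2+(60-4K_0^2)(n-1)\bar K x+100(n-1)^2\bar K^2$, where $K_0=3-\tfrac{2(n-4)}{\delta n(n-2)}$; for the prescribed $\delta$ one has $0<K_0\le3$, whence all three coefficients are nonnegative with the constant one positive, and (vi) follows. The last---and, I expect, fiddliest---step is then to check that the prescribed $\delta$ lies in the window imposed by (ii) and (vi): for $n\ge13$, $\delta=\tfrac{2(2n-5)}{n^2-2}<\tfrac{2(2n-5)}{n^2-4}$ is immediate, while for $4\le n\le12$, $\delta=\tfrac{\sqrt{12n+9}-7}{2(n-2)}\le\tfrac{2(2n-5)}{n^2-4}$ clears, after rationalising, to $n^2-14n+22\le0$, valid on $[\,7-3\sqrt3,\,7+3\sqrt3\,]\supset\{4,\dots,12\}$, and $K_0>0$ clears to $\delta>\tfrac{2(n-4)}{3n(n-2)}$, again a finite check. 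This bookkeeping for the small values of $n$---rather than any conceptual difficulty---is the main obstacle, the structural identities above rendering items (i)--(v) essentially automatic.
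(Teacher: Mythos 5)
Your proposal is correct, and it takes a genuinely different route from the paper. The paper proves Lemma~\ref{app2} by writing out $\mathring b,\mathring b',\mathring b''$ in closed form and attacking each item with ad hoc algebraic manipulations; you instead systematically exploit the convex-combination structure $b=(1-\delta)\ell+\delta\alpha$ and push everything onto properties of the two endpoints, $\ell$ being Baker's linear bound and $\alpha$ being the Lei--Xu function for which $Q(\alpha)\equiv 0$ and $\phi(\mathring\alpha)\equiv\tfrac{x}{n}+n\bar K$ are \emph{identities}. This buys you several things: (iii) becomes a two-line monotonicity-plus-interpolation argument (the paper does essentially this too, but your framing explains why $\phi(\mathring\alpha)=\tfrac xn+n\bar K$ holds); (iv) becomes a clean bilinear expansion $Q(b)=(1-\delta)Q(\ell)+\delta(1-\delta)(\ell-\alpha)\big((\ell-x\ell')-(\alpha-x\alpha')\big)$ in which the cross term is manifestly nonnegative (indeed strictly positive, with limit $\delta(1-\delta)(n-2)^2\bar K^2$ as $x\to0^+$), whereas the paper must estimate a surd expression by hand; and (ii) reduces transparently to the claim $2+\delta n(n-2)\le\tfrac{4(n-1)^2}{n+2}$, which is exactly the hypothesis $\delta\le\tfrac{2(2n-5)}{n^2-4}$. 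Most notably, on (i) your route is actually \emph{more} correct than the paper's: the paper's claimed chain $\tfrac{4x(\mathring b')^2}{\mathring b}<\tfrac{x}{n(n-1)\mathring b}<1$ needs $|\mathring b'|<\tfrac{1}{2\sqrt{n(n-1)}}$, but $\mathring b'\to-\infty$ as $x\to0^+$ because of the $\sqrt{x^2+4(n-1)\bar Kx}$ in the denominator of $\alpha'$, so the paper's displayed bound fails near the origin even though the conclusion is true. Your convexity reduction to the endpoints, followed by the $Q(\alpha)=0$ substitution to collapse $4x(\mathring\alpha')^2\le\mathring\alpha$ to $|nx-(n-2)g|\le\sqrt{n(n-1)}\,g$, handles the singularity at $x=0$ cleanly. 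One small misstatement: you say item (vi) is where the size of $\delta$ enters, but as your own discriminant argument shows, $K_0\le3$ is automatic from $n\ge4$, and when $K_0\le0$ the inequality is trivial; so (vi) holds for every $\delta\in(0,1]$ when $n\ge4$, and the real constraint on $\delta$ from this lemma comes only from (ii) (the paper's prescribed $\delta$ satisfying $\delta\le\tfrac{2(2n-5)}{n^2-4}$ precisely because $n^2-14n+22\le0$ on $4\le n\le12$ and because $n^2-2>n^2-4$ for $n\ge13$, as you verify).
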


\begin{proof}
  By direct computations, we get
  \[ \mathring{b}=\delta n\bar K+2(1-\delta)\bar K+\frac{\delta n^{2} -2\delta n+2}{2 ( n-1 ) n}x-\frac{\delta(n-2)}{2(n-1)}\sqrt{x^{2} +4 ( n-1 ) \bar K x},  \]

  \[ \mathring{b}'  = \frac{\delta n^{2} -2\delta n+2}{2 ( n-1 ) n} - \frac{\delta(n-2)}{2
     ( n-1 )}   \frac{x+2 ( n-1 ) \bar K}{\sqrt{x^{2} +4 ( n-1 ) \bar K x}}\leq\frac{n}{n(n-1)} ,  \]

  \[\mathring{b}''  = \frac{2 \delta ( n-1 ) ( n-2 ) \bar K^{2}}{\big(
     x^{2} +4 ( n-1 ) \bar K x \big)^{3/2}} .\]

(i)
\begin{equation*}
  \frac{4x (\mathring{b}')^2}{\mathring{b}}  < \frac{x }{n(n-1)\mathring{b}}<1.
\end{equation*}

 (ii)
 \begin{align*}
   2x  \mathring{b}''  + \mathring{b}'  =& \frac{\delta n^{2} -2\delta n+2}{2 ( n-1 ) n} - \frac{\delta(n-2)\big(x^3+3 ( n-1 ) \bar Kx^2 \big)}{2(n-1)\big(
     x^{2} +4 ( n-1 ) \bar K x \big)^{3/2}} \\
     < &\frac{\delta n^{2} -2\delta n+2}{2 ( n-1 ) n}\leq \frac{2
    ( n-1 )}{n ( n+2 )}, \ \ \ \ \rm{where}\  \delta\leq \frac{2(2n-5)}{n^2-4}.
 \end{align*}

(iii)
\begin{align*}
   & \frac{n-2}{\sqrt{n(n-1)}}\sqrt{x\mathring{b}}+\mathring{b} < \frac{n-2}{\sqrt{n(n-1)}}\sqrt{x\mathring{\alpha}}+\mathring{\alpha}=\frac{x}{n}+n\bar K.
\end{align*}

 (iv)
 \begin{align*}
   &a( \mathring{b}-x\mathring{b}') \\
   =& a \left( \delta n\bar K+2(1-\delta)\bar K-\frac{\delta(n-2)\bar K x}{\sqrt{x^{2} +4 ( n-1 ) \bar K x}} \right) \\
    =& \big(\delta n\bar K+2(1-\delta)\bar K\big)^2-\big(\delta n\bar K+2(1-\delta)\bar K\big)\cdot \frac{\delta(n-2)\big(x^2+3 ( n-1 ) \bar Kx \big)}{(n-1)\sqrt{x^{2} +4 ( n-1 ) \bar K x}}\\
    &+\frac{\delta^2(n-2)^2+\big(\delta n+2(1-\delta)\big)^2}{2(n-1)}\bar Kx.
 \end{align*}

 \begin{align*}
   &n\bar K(\mathring{b}+x\mathring{b}') \\
   =& n\bar K \left(  \delta n\bar K+2(1-\delta)\bar K+\frac{\delta n^{2} -2\delta n+2}{ ( n-1 ) n}x-\frac{\delta(n-2)\big(x^2+3 ( n-1 ) \bar Kx \big)}{(n-1)\sqrt{x^{2} +4 ( n-1 )\bar Kx}} \right).
 \end{align*}

 \begin{align*}
   &a( \mathring{b}-x\mathring{b}')-n\bar K(\mathring{b}+x\mathring{b}') \\
   =& (\delta-1)(n-2) \left( \delta n\bar K+2(1-\delta)\bar K- \frac{\delta(n-2)\big(x^2+3 ( n-1 ) \bar Kx \big)}{(n-1)\sqrt{x^{2} +4 ( n-1 )\bar Kx}} \right)\bar K\\
   &+\frac{\delta(\delta-1)(n-2)^2}{n-1}\bar Kx\\
   =&(\delta-1)(n-2)\bar K \left( \delta n\bar K+2(1-\delta)\bar K+\frac{\delta(n-2)}{n-1}\bigg(x-\frac{x^2+3 ( n-1 ) \bar Kx }{\sqrt{x^{2} +4 ( n-1 )\bar Kx}}\bigg)\right)\\
   <&-(1-\delta)(n-2)\bar K \Bigg( \delta n\bar K+2(1-\delta)\bar K+\frac{\delta(n-2)}{n-1} \bigg( -(n-1)\bar K \bigg) \Bigg)\\
   =&-2(1-\delta)(n-2)\bar K^2.
 \end{align*}

 (v)
 \begin{align*}
   &  \frac{x}{n-1}\left( b   +n\bar K
    \right) - (\frac{x}{n-1}+2n\bar K) \left(  b
      -n\bar K+\mathring{b} - x\mathring{b}' \right) \\
    =& \frac{2nx\bar K}{n-1}-2n\bar K(a-n\bar K)\\
    &-(\frac{x}{n-1}+2n\bar K) \left(  \delta n\bar K+2(1-\delta)\bar K-\frac{\delta(n-2)\bar K x}{\sqrt{x^{2} +4 ( n-1 ) \bar K x}} \right)\\
    =&-\frac{2\bar Kx}{n-1}+ 2n(n-4)\bar K^2+\frac{\delta(n-2)\bar Kx}{n-1} \left(\frac{x}{\sqrt{x^{2} +4 ( n-1 ) \bar K x}}-1\right)\\
    &+\frac{\delta n(n-2)\bar K}{n-1} \left(\frac{x^{2} +6 ( n-1 ) \bar K x}{\sqrt{x^{2} +4 ( n-1 ) \bar K x}}-x-4(n-1)\bar K\right)\\
    <&-\frac{2\bar Kx}{n-1}+ 2n(n-4)\bar K^2.
 \end{align*}

(vi)
\begin{align*}
   &  2\mathring{b}-\frac{x}{n}+x\mathring{b}'\\
    =&2\big(\delta (n-2)+2\big)\bar K+3\frac{\delta n^{2} -2\delta n+2}{2 ( n-1 ) n}x-\frac{x}{n}\\
    &-2\frac{\delta(n-2)}{2(n-1)}\sqrt{x^{2} +4 ( n-1 ) \bar K x}- \frac{\delta(n-2)}{2
     ( n-1 )}   \frac{x^2+2 ( n-1 ) \bar Kx}{\sqrt{x^{2} +4 ( n-1 ) \bar K x}}\\
     =&2\big(\delta (n-2)+2\big)\bar K-\frac{n-4}{ ( n-1 ) n}x\\
    &+\frac{3\delta (n-2)x}{2 ( n-1 ) }- \frac{\delta(n-2)}{2
     ( n-1 )}   \frac{3x^2+10 ( n-1 ) \bar Kx}{\sqrt{x^{2} +4 ( n-1 ) \bar K x}}\\
     \leq&2\big(\delta (n-2)+2\big)\bar K.
 \end{align*}

\end{proof}

There exists a small positive number $0<\epsilon\ll\frac{1}{n^2},$
such that $M_{0}$ satisfies
\begin{equation}\label{con-w2}
  |\mathring{A}|^{2} < \mathring{b} - \epsilon   \omega , \hspace{2em}
   \omega = \frac{| H |^{2}}{n-1} +2n\bar K.
\end{equation}
In the following we prove that the pinching condition above is
preserved along the flow.

\begin{proposition}
Let $F_0 :M \rightarrow \mathbb{S}^{n+p}(\frac{1}{\sqrt{\bar K}})$ be a compact submanifold immersed in the sphere. Suppose there exists a small positive number $\epsilon(\ll \frac{1}{n^2})$ such that $ |A| ^ 2 \leq b(|H|^2)-\epsilon\omega, $
 then this condition holds
  along the mean curvature flow for all time $t \in [ 0,T )$ where $T\leq\infty$.
\end{proposition}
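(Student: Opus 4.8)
The plan is to follow the proof of Proposition~\ref{baochi} line for line, replacing $\mathring a$ and Lemma~\ref{app} by $\mathring b$ and Lemma~\ref{app2}. First I would set $U = |\mathring A|^2 - \mathring b(|H|^2) + \epsilon\omega$, so that the hypothesis reads $U \le 0$ at $t = 0$. Applying the chain rule to the evolution equation for $|H|^2$ in Lemma~\ref{ee} gives the $\mathring b$-analogue of (\ref{a}), namely $\frac{\partial}{\partial t}\mathring b = \Delta\mathring b + 2\mathring b'\bigl(-|\nabla H|^2 + R_2 + n\bar K|H|^2\bigr) - \mathring b''|\nabla|H|^2|^2$. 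Combining this with the evolution equations for $|\mathring A|^2$ and $|H|^2$, with the curvature estimates of Lemma~\ref{eq} (the gradient inequality, the identity for $R_2$, and the bound on $R_1 - \frac1n R_2$), and with $|\nabla|H|^2|^2 \le 4|H|^2|\nabla H|^2$, produces a parabolic inequality for $U$ of exactly the shape found in Proposition~\ref{baochi}; in particular the coefficient of $|\nabla H|^2$ is $2\bigl(-\frac{2(n-1)}{n(n+2)} + \mathring b' - \frac{\epsilon}{n-1} + 2|H|^2\mathring b''\bigr)$, which is negative by Lemma~\ref{app2}(ii). This is where the constraint $\delta \le \frac{2(2n-5)}{n^2-4}$ enters, and both prescribed values of $\delta$ satisfy it.

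Next I would substitute $|\mathring A|^2 = U + \mathring b - \epsilon\omega$, using $\mathring b + \frac{|H|^2}{n} = b$, so the zeroth-order part becomes $2U\Phi + 2U^2 + \mathcal R$, where $\Phi$ is the bounded coefficient of Proposition~\ref{baochi} with $\mathring a$ replaced by $\mathring b$, and, writing $x = |H|^2$,
\[
\mathcal R = 2\bigl[b\mathring b - n\bar K\mathring b - x\mathring b'(b + n\bar K)\bigr] + 2P_2\bigl(2\mathring b - \tfrac{x}{n} + x\mathring b' - \tfrac32 P_2\bigr) + 2\epsilon\,\mathcal E + 4n\bar K\epsilon^2\omega,
\]
with $\mathcal E = \frac{x}{n-1}(b + n\bar K) - \omega\bigl(b + \mathring b - n\bar K - x\mathring b'\bigr)$ and an additional harmless $-\epsilon(2\omega + \frac{x}{n-1})$ inside the $P_2$-bracket. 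The crux is to show $\mathcal R < 0$. Ignoring the $\epsilon$-terms, $P_2 \mapsto 2P_2(2\mathring b - \frac xn + x\mathring b') - 3P_2^2$ is a downward-opening quadratic whose supremum over $P_2 \ge 0$ is at most $\frac13(2\mathring b - \frac xn + x\mathring b')^2$; invoking Lemma~\ref{app2}(iv) ($b\mathring b - n\bar K\mathring b - x\mathring b'(b + n\bar K) < -2(1-\delta)(n-2)\bar K^2$) and Lemma~\ref{app2}(vi) ($2\mathring b - \frac xn + x\mathring b' < 2(\delta(n-2)+2)\bar K$) shows that the $\epsilon$-free part of $\mathcal R$ is $< 0$ provided $(\delta(n-2)+2)^2 \le 3(1-\delta)(n-2)$. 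This is precisely the inequality that pins down $\delta$: for $4 \le n \le 12$ the stated $\delta = \frac{\sqrt{12n+9}-7}{2(n-2)}$ is its positive root, and for $n \ge 13$ the stated $\delta = \frac{2(2n-5)}{n^2-2}$ lies strictly below that root (a polynomial check).

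It then remains to absorb the $\epsilon$-terms. The $\epsilon$-free part $2[b\mathring b - n\bar K\mathring b - x\mathring b'(b+n\bar K)] + \frac13(2\mathring b - \frac xn + x\mathring b')^2$ is continuous on $[0,\infty)$, strictly negative at $x = 0$ (Lemma~\ref{app2}(iv) is strict there, while (vi) degenerates to an equality only at $x=0$), and has a strictly negative limit as $x \to \infty$ (from the asymptotics $\mathring b - x\mathring b' \to 2\bar K$, $b(\mathring b - x\mathring b') - n\bar K(\mathring b + x\mathring b') \to -4(1-\delta)(n-2)\bar K^2$, $2\mathring b - \frac xn + x\mathring b' \sim 4\bar K - \frac{(n-4)x}{n(n-1)}$), hence is bounded above by some $-\mu(n)\bar K^2 < 0$. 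By Lemma~\ref{app2}(v), $\mathcal E < -\frac{2\bar Kx}{n-1} + 2n(n-4)\bar K^2$, so, using $\omega - \frac{x}{n-1} = 2n\bar K$, the $\epsilon$-dependent part of $\mathcal R$ is bounded by a constant $O(\epsilon)\bar K^2$ uniformly in $x \ge 0$ (the growing term $-\frac{2\epsilon\bar Kx}{n-1}$ is favourable). Choosing $0 < \epsilon \ll 1/n^2$ small enough compared to $\mu(n)$ then forces $\mathcal R < 0$ for all $x \ge 0$ and $P_2 \ge 0$, so $\big(\tfrac{\partial}{\partial t} - \Delta\big)U \le 2U\Phi + 2U^2 + \mathcal R$ with $\mathcal R < 0$; the maximum principle (at a first spatio-temporal maximum one would have $U = 0$ and $\Delta U \le 0$, hence $\tfrac{\partial}{\partial t}U \le \mathcal R < 0$, a contradiction) gives $U \le 0$ for all $t \in [0,T)$, which is the assertion.

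The step I expect to be the main obstacle is the uniform strict negativity of $\mathcal R$ for $4 \le n \le 12$, where $\delta$ is critical so that $(\delta(n-2)+2)^2 = 3(1-\delta)(n-2)$ exactly and the crude bound on the $P_2$-quadratic only gives $\le 0$; extracting an honest gap $\mu(n) > 0$ requires carefully exploiting the strictness in Lemma~\ref{app2}(iv) together with the behaviour at $x = 0$ and $x = \infty$. All remaining steps are identical to the bookkeeping in Proposition~\ref{baochi}.
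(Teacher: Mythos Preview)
Your proposal is correct and follows the paper's approach exactly: the paper reduces (by citing Proposition~\ref{baochi} verbatim) to the inequality $b(\mathring b - x\mathring b') - n\bar K(\mathring b + x\mathring b') + P_2\bigl(2\mathring b - \tfrac{x}{n} + x\mathring b'\bigr) - \tfrac32 P_2^2 < 0$ and proves it via Lemma~\ref{app2}(iv), (vi), and the discriminant check $(\delta(n-2)+2)^2 \le 3(1-\delta)(n-2)$, leaving the $\epsilon$-absorption implicit as in Proposition~\ref{baochi}. Your anticipated obstacle is not one: since (iv) is \emph{strict} for every $x$, the expression lies strictly below the nonpositive quadratic in $P_2$, so pointwise $<0$ holds even at the critical $\delta$, and the $\epsilon$-terms are then absorbed exactly as before via (v) (whose $-\tfrac{2\bar K x}{n-1}$ term handles large $x$) without any separate continuity or limit argument.
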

\begin{proof}
As the proof of Proposition \ref{baochi}, we need the following inequality holds for $x\geq0$.
\begin{align*}
b( \mathring{b}-x\mathring{b}')-n\bar K(\mathring{b}+x\mathring{b}')
       +P_{2} \left( 2 \mathring{b} - \frac{x}{n}  + x \mathring{b}' \right)- \frac{3}{2} P^2_{2}<0. \\
\end{align*}

From Lemma \ref{app2}, we have
  \begin{align*}
  &b( \mathring{b}-x\mathring{b}')-n\bar K(\mathring{b}+x\mathring{b}')
       +P_{2} \left( 2 \mathring{b} - \frac{x}{n}  + x \mathring{b}' \right)- \frac{3}{2} P^2_{2}\\
     <&  -2(1-\delta)(n-2)\bar K^2+2\Big( \delta (n-2)+2 \Big)\bar K P_2-\frac{3}{2}P^2_2.
  \end{align*}
Its discriminant is
 \begin{align*}
    \Delta =& \Big( \delta (n-2)+2 \Big)^2-3(1-\delta)(n-2) \\
     =& \delta^2 (n-2)^2 +7\delta (n-2)+4-3(n-2)\\
     \leq& 0.
  \end{align*}
\end{proof}

We complete the proof of Theorem \ref{t2} with the following estimate.
\begin{lemma}\label{}Let $F :M \rightarrow \mathbb{S}^{n+p}(\frac{1}{\sqrt{\bar K}})$ be a compact submanifold immersed in the sphere with
constant curvature $\bar K$. If $F$ satisfies pinching condition (\ref{con-w2}), then there exists a strictly positive constant $\epsilon$ such that
\begin{equation}
  n\bar K|\mathring A|^2-R_1+R_3\geq
           \frac{n}{2}|\mathring A|^2(\epsilon|A|^2-4\bar{K}).
\end{equation}
\end{lemma}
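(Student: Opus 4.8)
The plan is to run the argument of Lemma~\ref{Z2} with the pinching function $a$ replaced by $b$, keeping track of the constant that comes out. The first step is carried over verbatim: applying the Simons-type estimate of Lemma~\ref{eq} to $n\bar K|\mathring A|^2-R_1+R_3$, exactly as in Lemma~\ref{Z2}, gives
\[
  n\bar K|\mathring A|^2-R_1+R_3\ \geq\ \frac n2|\mathring A|^2\Big(\frac{|H|^2}{n(n-1)}+2\bar K\Big)-\frac n2|\mathring A|^4
  \ =\ \frac n2|\mathring A|^2\Big(\frac{|H|^2}{n-1}+2\bar K-|A|^2\Big),
\]
the last equality being the identity $|\mathring A|^2=|A|^2-\tfrac{|H|^2}{n}$. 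This part is insensitive to which pinching function is used, so nothing changes here.

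Next I would insert the hypothesis (\ref{con-w2}), i.e.\ $|A|^2\leq b(|H|^2)-\epsilon\omega$, to obtain the lower bound $\tfrac n2|\mathring A|^2\big(\tfrac{|H|^2}{n-1}+2\bar K-b(|H|^2)+\epsilon\omega\big)$. It then remains to prove the scalar inequality
\[
  \frac{x}{n-1}+2\bar K-b(x)+\epsilon\omega\ \geq\ \epsilon|A|^2-4\bar K,\qquad x=|H|^2\geq 0.
\]
Since $b(x)=(1-\delta)\big(\tfrac{x}{n-1}+2\bar K\big)+\delta\,\alpha(x)$, one has $\tfrac{x}{n-1}+2\bar K-b(x)=-\delta\big(\alpha(x)-\tfrac{x}{n-1}-2\bar K\big)$, and a direct computation with the explicit formula for $\alpha$ yields
\[
  \alpha(x)-\frac{x}{n-1}-2\bar K=\frac{n-2}{2(n-1)}\Big(x+2(n-1)\bar K-\sqrt{x^{2}+4(n-1)\bar Kx}\Big),
\]
which is positive and strictly decreasing on $[0,\infty)$, hence bounded above by its value $(n-2)\bar K$ at $x=0$. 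Therefore $0\leq b(x)-\tfrac{x}{n-1}-2\bar K\leq \delta(n-2)\bar K$, so the left-hand side of the scalar inequality is at least $\epsilon\omega-\delta(n-2)\bar K$; and since $|A|^2<\omega$ (a consequence of (\ref{con-w2}), using $b(x)<\omega$), the inequality follows as soon as $\delta(n-2)\leq 4$.

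Finally I would check $\delta(n-2)\leq 4$ for the two ranges of $\delta$. For $4\leq n\leq 12$ one has $\delta(n-2)=\tfrac{\sqrt{12n+9}-7}{2}$, which is increasing in $n$ and hence at most $\tfrac{\sqrt{153}-7}{2}<3$. For $n\geq 13$ one has $\delta(n-2)=\tfrac{2(2n-5)(n-2)}{n^{2}-2}=\tfrac{4n^{2}-18n+20}{n^{2}-2}<4$, since $4(n^{2}-2)-(4n^{2}-18n+20)=18n-28>0$. This completes the argument, with $\epsilon$ taken to be the constant appearing in (\ref{con-w2}). The only genuinely delicate point is the first displayed inequality, which rests on the $P_{2}$-bookkeeping packaged in Lemma~\ref{eq} and is precisely the first step of Lemma~\ref{Z2}; everything after it is elementary calculus with the explicit functions $\alpha$, $b$ and $\delta$.
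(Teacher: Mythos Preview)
Your argument is correct and follows the paper's approach essentially verbatim: the paper also starts from the Lemma~\ref{eq} estimate, rewrites via $|\mathring A|^2=|A|^2-\tfrac{|H|^2}{n}$, inserts the pinching $|A|^2\leq b-\epsilon\omega$, and uses the bound $b\leq \tfrac{|H|^2}{n-1}+2\bar K+\delta(n-2)\bar K$ to arrive at $\tfrac n2|\mathring A|^2\big(\epsilon\omega-\delta(n-2)\bar K\big)$. You are in fact more explicit than the paper in the last step, where you verify $\delta(n-2)\leq 4$ in both regimes and use $|A|^2<\omega$ to pass from $\epsilon\omega-\delta(n-2)\bar K$ to $\epsilon|A|^2-4\bar K$; the paper's proof stops at the former expression and leaves this final reduction to the reader.
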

\begin{proof}
\begin{equation*}
  b\leq\frac{|H|^2}{n-1}+(1-\delta)2\bar K+\delta n \bar K.
\end{equation*}
From Lemma \ref{eq}, we have
\begin{align*}
   n\bar K|\mathring A|^2-R_1+R_3&\geq\frac{n}{2}|\mathring A|^2(\frac{|H|^2}{n(n-1)}+2\bar K)-\frac n2|\mathring A|^4\\
   &=\frac{n}{2}|\mathring A|^2(\frac{|H|^2}{n-1}+2\bar K-|A|^2)\\
   &\geq\frac{n}{2}|\mathring A|^2(\frac{|H|^2}{n-1}+2\bar K-b+\epsilon\omega)\\
   &\geq\frac{n}{2}|\mathring A|^2\Big(\epsilon\omega-\delta(n-2)\bar K\Big).
\end{align*}

\end{proof}

$$ \ \ \ \ $$


\begin{thebibliography}{999}


\bibitem{AB}
B. Andrews and C. Baker, \emph{Mean curvature flow of pinched submanifolds to sphere,}
  J. Differential Geom. 85(2010),357-395.

\bibitem{B} C. Baker, \emph{The mean curvature flow of submanifolds of high codimension,} arXiv:1104.4409.

\bibitem{BN} C. Baker and H. Nguyen, \emph{Codimension two surfaces pinced by normal curvature evolving by mean curvature flow,}
Ann. Inst. H. Poincar\'{e} Anal. Non Lin\'{e}aire  34(2017), 1599-1610.

\bibitem{H1} G. Huisken, \emph{Flow by mean curvature of convex surfaces into spheres,}  J. Differential Geom. 20(1984), 237-266.

\bibitem{H2} G. Huisken, \emph {Contracting convex hypersurface in Riemannian manifolds by their mean curvature,}
Invent. Math. 84(1986), 463-480.

\bibitem{H3} G. Huisken, \emph{Deforming hypersurfaces of the sphere by their mean curvature,} Math. Z. 195(1987), 205-219.
\bibitem{H4} G. Huisken, \emph{The volume preserving mean curvature flow}, J. Reine Angew. Math. 382(1987), 35-48.
\bibitem{LS} H. B. Lawson and J. Simons, \emph{On stable currents and their application to global problems in real and complex geometry,} Ann. of Math.
98(1973), 427-450.
\bibitem{LX} L. Lei and H. W. Xu, \emph{An optimal convergence theorem for mean curvature flow of arbitrary codimension in hyperbolic spaces,} arXiv:1503.06747.
\bibitem{LX2} L. Lei and H. W. Xu, \emph{Mean curvature flow of arbitrary codimension in spheres and sharp differentiable sphere theorem}, arXiv:1506.06371.
\bibitem{LX4} L. Lei and H. W. Xu, \emph{New developments in mean curvature flow of arbitrary codimension inspired by Yau rigidity theory},
Proceedings of the Seventh International Congress of Chinese Mathematicians. Vol. I, 327-348, Adv. Lect. Math. (ALM), 43, 2019.
\bibitem{LXYZ} K. F. Liu, H. W. Xu, F. Ye and E. T. Zhao,
\emph{Mean curvature flow of higher codimension in hyperbolic spaces,} Comm. Anal. Geom. 21(2012), 651-669.
\bibitem{LXYZ1}K. F. Liu, H. W. Xu, F. Ye and E. T. Zhao, \emph{The extension and convergence of mean curvature flow in higher codimension,} Trans. Amer. Math. Soc. 370(2018), 2231-2262.
\bibitem{Ok}M. Okumura, \emph{Hypersurfaces and a pinching problem on the second fundamental tensor}, Am. J. Math. 96(1974), 207-213.
\bibitem{MR1458750}K. Shiohama and H. W. Xu, \emph{The topological sphere theorem for complete
  submanifolds}, Compositio Math., 107(1997),  221-232.
\bibitem{Si}J. Simons, \emph{Minimal varieties in Riemannian submanifolds}, Ann. Math. 88(1968), 62-105.
\bibitem{Sm}K. Smoczyk, \emph{Mean curvature flow in higher codimension: introduction and survey}, Global differential geometry, 231-274, Springer Proc. Math., 17, Springer, Heidelberg, 2012.
\bibitem {MR2550209}H. W. Xu and E. T. Zhao, \emph{Topological and differentiable sphere
  theorems for complete submanifolds}, Comm. Anal. Geom., 17(2009), 565-585.

\end{thebibliography}
\end{document}